\documentclass[12pt]{amsart}
\usepackage[utf8]{inputenc}
\usepackage{enumerate,enumitem,amsmath,amssymb,mathrsfs,stmaryrd}
\usepackage{latexsym,epsf,graphicx,comment,appendix}

\newfont{\bb}{msbm10 at 12pt}
\newfont{\tbb}{msbm10 at 8pt}

\def\n{\hbox{\bb N}}

\topmargin 0cm 
\textheight = 40
\baselineskip 
\textwidth 16cm 
\oddsidemargin 0.3cm
\evensidemargin 0.4cm

\usepackage {amsmath}
\usepackage {amsthm}
\usepackage{times}
\usepackage{amscd}
\usepackage{epsf}

\usepackage[english]{babel}
% or whatever
\usepackage[utf8]{inputenc}
% or whatever
\usepackage{tikz}
\usepackage{color}
\usepackage{times}
\usepackage[T1]{fontenc}
\usepackage[utf8]{inputenc}
\usepackage{amsmath}
\usepackage{dsfont}
\usepackage{amsfonts}

\numberwithin{equation} {section}

\begin{document}
\mbox{}\vspace{0.2cm}\mbox{}

\providecommand{\keywords}[1]
{
  \small	
  \textbf{\textit{Keywords---}} #1
}

\theoremstyle{plain}\newtheorem{lemma}{Lemma}[section]
\theoremstyle{plain}\newtheorem{proposition}{Proposition}[section]
\theoremstyle{plain}\newtheorem{theorem}{Theorem}[section]

\theoremstyle{plain}\newtheorem*{theorem*}{Theorem}
\theoremstyle{plain}\newtheorem*{main theorem}{Main Theorem} 
\theoremstyle{plain}\newtheorem*{lemma*}{Lemma}
\theoremstyle{plain}\newtheorem*{claim}{Claim}
\theoremstyle{plain}\newtheorem{corollary}{Corollary}[section]
\theoremstyle{plain}\newtheorem*{corollary-A}{Corollary}

\theoremstyle{definition}\newtheorem{example}{Example}[section]
\theoremstyle{definition}\newtheorem{remark}{Remark}[section]
\theoremstyle{definition}\newtheorem{definition}{Definition}[section]
\theoremstyle{definition}\newtheorem{acknowledge}{Acknowledgment}
\theoremstyle{definition}\newtheorem{conjecture}{Conjecture}

\begin{center}
\rule{15cm}{1.5pt} \vspace{.4cm}

{\bf\Large Potential theory and applications in conformal geometry} 
\vskip .3cm

Shiguang Ma$\mbox{}^\dag$ and Jie Qing$\mbox{}^\ddag$
\vspace{0.3cm} 
\rule{15cm}{1.5pt}
\end{center}

%\acknowledgment{}

\title{}

\begin{abstract} In this paper, we want to give an exposition of our recent work on linear and nonlinear 
potential theory and their applications in conformal geometry. We use potential theory to study 
linear and quasilinear equations arising from conformal geometry. We establish the asymptotic behavior near 
singularities and derive applications in conformal geometry.  In particular, we establish some Huber's type theorems 
and Hausdorff dimension estimates of the ends in conformal geometry in general dimensions. 
\end{abstract}

\subjclass[2010]{53A30; 53C21; 31B35; 31B05; 31B15; 31C40}
\keywords {Riesz potentials, Wolff potentials, capacities, thin sets, scalar curvature equations, $Q$-curvature equations, $p$-Laplace
equations, Huber's type theorems, Hausdorff dimensions}

\maketitle

\tableofcontents

\section{Introduction}

This article is written based on the talk delivered at the Second International Conference on Differential Geometry ICDG-FEZ’2024.
The talk was to report some recent results on linear and nonlinear potential theory and their applications in conformal geometry. 

The subject of conformal geometry here in many ways is developed from theories of
analysis and geometry in 2 dimensions. We know that the integral of the Gauss curvature directly gives the Euler number from
the Gauss-Bonnet. Perhaps it is slightly less well known that the differential equation about Gauss curvature facilitated 
the differential geometric approach to the 
uniformization theorem for Riemann surfaces. The theory of supharmonic functions and the Gauss curvature equation in 2 dimensions 
became essential to the theory of open surfaces \cite{Hu57}. The study of Gauss curvature equation also led to the differential
geometric approach to the Teichm\"{u}ller theory of Riemann surfaces \cite{Tromba}. Indeed the Gauss curvature equation 
has been shown 
to be the powerful analytic tool to study the theory of conformal structure on surfaces. Another seminal achievement is exemplified by the 
works on the isospectral problems in 2 dimensions in \cite{OPS1, OPS2, OPS3}. 

To emulate these remarkable achievements in 2 dimensions to higher dimensions, one way is to look for analogs of the Gauss 
curvature equation and analytic tools to study these geometric differential equations. In this article we will review some well known results and report some recent developments. 

We will start in Section 2 with the introduction of generalizations of the Gauss curvature equation in general dimensions 
systematically. Particularly we will introduce a family of $p$-Laplace equations about Ricci curvature tensor as quasilinear generalizations of
the Gauss curvature equation. The introduction of superharmonic as well as 
$p$-superharmonic functions paves the way to the uses of linear and nonlinear potential theories.

In Section 3 we will review motivations in the study of geometric differential equations in conformal geometry. In higher dimensions, it started with 
the Yamabe problem \cite{Sch}.  The Yamabe problem is to search for constant scalar curvature metrics in the hope to find the extension of uniformization theorem in higher dimensions.
From this perspective, we want to provide an overview of current research on geometric differential equations in 
conformal geometry. 

In Section 4 and 5, we will focus on our recent work on applications of potential theory in the study of 
singular solutions of differential equations that are analogs of the Gauss curvature equation in higher dimensions. We then will report
analogs of Huber theorem \cite{Hu57, MQ21a, MQ22g} in higher dimensions, and other geometric and topological consequences in conformal geometry
\cite{MQ22a, LMQZ1, LMQZ2}.  

\section{Partial differential equations in conformal geometry}\label{Sect:PDE}

Let us first quickly introduce curvature on Riemannian manifolds. In a local coordinate $(U, \phi)$ on a Riemannian manifold $(M^n, g)$, 
$$
x=(x_1, x_2, \cdots x_n): \Omega\subset\mathbb{R}^n\to U\subset M^n,
$$
the Riemannian metric and Christoffel symbols are
$$
g_{ij} = g(\partial_{x_i}, \partial_{x_j}) \text{ and } \Gamma_{ij}^k = \frac 12 g^{kl}(\partial_{x_i}g_{jl}
+ \partial_{x_j}g_{il} - \partial_{x_l}g_{ij}).
$$
Therefore the Levi-Civita covariant differentiation is given as
$$
\nabla_{\partial_{x_i}}\partial_{x_j} = \Gamma^k_{ij}\partial_{x_k}
$$
and Riemann curvature tensor is
 $$
R_{ijk}^{\quad l} = - \partial_{x_i}\Gamma^l_{jk} +
\partial_{x_j} \Gamma^l_{ik} - \Gamma^m_{jk}\Gamma^l_{mi} + \Gamma^m_{ik}\Gamma^l_{mj}.
$$
Then Ricci curvature tensor and (Ricci) scalar curvature are
$$
R_{ik} = g^{jl} R_{ijkl} = R_{ijk}^{\quad j} \text{ and } R = g^{ik}R_{ik} = R_i^{\ i}.
$$

In conformal geometry one often encounters the Schouten curvature tensor 
\begin{equation}\label{Equ:schouten}
A_{ik} = \frac 1{n-2}(R_{ik} - \frac 1{2(n-1)} R\, g_{ik})
\end{equation}
and uses the notation $J= g^{ik}A_{ik} = \frac R{2(n-1)}$. 

%%%%%%%%%%%%%%%%%

\subsection{Semilinear equations and superharmonic functions}\label{Subsec:semi-linear}

In dimension 2, let $R_{1212} = K$. Then the celebrated Gauss curvature equation is
\begin{equation}\label{Equ:gauss-curvature-eq}
-\Delta u + K = K_{e^{2u}g}e^{2u}.
\end{equation}
In dimensions greater than 2, the scalar curvature equation is
\begin{equation}\label{Equ:scalar-curvature-eq}
P_2 u = \frac {n-2}{4(n-1)}R_{u^\frac 4{n-2}g}u^\frac {n+2}{n-2}
\end{equation}
where $P_2  = - \Delta  + \frac {n-2}{4(n-1)} R$ is the conformal Laplacian. \eqref{Equ:scalar-curvature-eq} is often referred as the Yamabe
equation and has been extensively studied in connection to the well known Yamabe probelm in conformal geometry.

One may recall that the Paneitz operator is 
$$
P_4  = \Delta^2  + \text{div} (4 A\cdot \nabla - (n-2)J\nabla ) + \frac {n-4}2 Q_4
$$ 
and the associated $Q$-curvature is
$$
Q_4 = - \Delta J + \frac n2 J^2 - 2|A|^2.
$$
The $Q$-equation is 
\begin{equation}\label{Equ:paneitz-eq-4}
P_4 u+ Q_4 = (Q_4)_{e^{2u}g}e^{4u}
\end{equation}
in dimension 4 and
\begin{equation}\label{Equ:paneitz-eq-5}
P_4 u = \frac {n-4}2(Q_4)_{u^\frac 4{n-4}g}u^\frac {n+4}{n-4}
\end{equation}
in dimensions higher than 4. In fact, there are higher order analogs, the so-called GJMS operators $P_{2k}$ and corresponding 
curvature $Q_{2k}$ (cf. \cite{FG85, FG06, GJMS}). They all satisfy the following characteristic properties.
\begin{itemize}
\item The conformal covariance:
$$
(P_{2k})_{u^{\frac 4{n-2k}}g}(\phi) = u^{-\frac {n+2k}{n-2k}}(P_{2k})_{g}(u\cdot\phi) 
$$ 
\item On Euclidean space, $P_{2k}  = (-\Delta)^k$, which is the leading order term in general.
\item The associated scalar curvature of higher order is $Q_{2k} = P_{2k} (1) \text{ when $2k < n$}$.
\end{itemize} 
And
$$
P_{n} u+  Q_n = (Q_n)_{e^{2u}g}e^{nu} \quad \text{ when $n$ is even and}
$$
$$
P_{2k} u = (Q_{2k})_{u^\frac 4{n-2k}g} u^\frac {n+2k}{n-2k} \quad \text{ when $2k < n$.}
$$
Moreover there are fractional analogs coming from the scattering theory of Poincr\'{e}-Einstein manifolds (cf. \cite{GZ03, GQ10}). 
Let $(M^n, [g])$ be the conformal infinity of a Poincar\'{e}-Einstein manifold $(X^{n+1}, g^+)$, where $[g]$ stands for the class of 
conformal metrics on $M^n$. For a representative
$g$, the regularized scattering operator $(P_\alpha)_{g}$ of order $\alpha\in (0, n]$ and the associated nonlocal "curvature"
$$
(Q_\alpha)_{g} = (P_\alpha)_{g} 1
$$
of order $\alpha$ behave similarly 
$$
(P_n)_{g} u + (Q_n)_{g} = (Q_n)_{e^{2u}g} e^{nu} \text{ when $\alpha = n$ is odd, and}
$$
$$
(P_\alpha)_{g} u = (Q_\alpha)_{u^\frac 4{n-\alpha}g} u^\frac {n+\alpha}{n-\alpha} \text{ for $\alpha \in (0, n)$.} 
$$
$P_\alpha$ is the family of fractional powers of Laplacians that includes GJMS operators $P_{2k}$ (cf. \cite{GZ03, mG05, QR1, QR2, GQ10, GQ13}).

We will consider solutions to the equation
\begin{equation}\label{Equ:superhar}
(-\Delta)^\frac \alpha 2 u = \mu \text{ on Euclidean space $\mathbb{R}^n$}
\end{equation}
or  
$$
P_{\alpha} u = \mu \text{ on $M^n$ in general}
$$
for a nonnegative Radon measure $\mu$ as generalized superharmonic functions and appeal to the linear potential theory
in our study of the asymptotic behavior near singularities.

%%%%%%%%%%%%%%%%%

\subsection{$p$-Laplace equations and $p$-superharmonic functions}\label{Subsec:p-laplace}

Here, we want to call the attention to the intermediate Schouten curvature tensor
(cf. \cite{LMQZ1, LMQZ2})
\begin{equation}\label{Equ:intermediate}
A^{(p)} = (p-2)A + J\, g
\end{equation}
for $p\in (1, \infty)$. For $\bar g = u^\frac {4(p-1)}{n-p} g$ and $p\neq n$,
\begin{equation}\label{Equ:transformation}
\aligned
 A^{(p)}_{\bar g}  =  A^{(p)}  - & \frac {2(p-1)}{n-p} \left[ \frac {\Delta u}u g  + (p-2) \frac {D^2u}u \right] \\
  + \frac {2(p-1)}{n-p} & [(1 - (n+p-4)\frac {p-1}{n-p})\frac {|\nabla u|^2}{u^2}g \\
  + (p-2)& ( 1+ \frac {2(p-1)}{n-p}) \frac {\nabla u\otimes \nabla u}{u^2} ].
\endaligned
\end{equation}
Multiplying $u |\nabla u|^{p-2}\frac {u_i}{|\nabla u|}\frac {u_j}{|\nabla u|}$ to and summing up on both sides,
we arrive at the $p$-Laplace equations in conformal geometry
\begin{equation}\label{Equ:p-laplace}
-\Delta_p u + \frac {n-p}{2(p-1)}S^{(p)}(\nabla u) u = \frac {n-p}{2(p-1)}(S^{(p)}(\nabla u))_{\bar g} u^q
\end{equation}
where 
$$
S^{(p)}(\nabla u) = |\nabla u|^{p-2} A^{(p)}(\nabla u), \quad  q = \frac {2p(p-1)}{n-p} + 1,
$$
and $A^{(p)}(\nabla u)$ is the $A^{(p)}$ curvature in the direction $\nabla u$. 
When $p=n$, we realize $A^{(n)} = Ric$, and the $n$-Laplace equation is
\begin{equation}\label{Equ:n-laplace}
-\Delta_n\phi + |\nabla\phi|^{n-2}Ric(\nabla\phi) = (|\nabla\phi|^{n-2}Ric(\nabla\phi))_{\bar g} e^{n\phi}
\end{equation}
where $\bar g = e^{2\phi}g$. Recall
$$
\Delta_p u = \text{div}(|\nabla u|^{p-2}\nabla u).
$$
For $p=n=2$, \eqref{Equ:n-laplace} goes back to the Gauss curvature equation
$$
-\Delta \phi + K = K_{\bar g}e^{2\phi},
$$
where $\bar g = e^{2\phi}g$. 
For $p=2$ and $n\geq 3$, the intermediate Schouten curvature goes back to the scalar curvature and the $p$-Laplace equation
\eqref{Equ:p-laplace} goes back to the scalar curvature equation
$$
-\Delta u + \frac {n-2}{4(n-1)}R u = \frac {n-2}{4(n-1)} R_{\bar g} u^\frac {n+2}{n-2},
$$
where $\bar g = u^\frac 4{n-2} g$. 
For $p > n$, the $p$-Laplace equation \eqref{Equ:p-laplace} for the intermediate Schouten curvature is still valid
for $\bar g = u^{- \frac {4(p-1)}{p-n}} g$ and $q= - \frac {2p(p-1)}{p-n} + 1 < 0$. And, when taking $p\to\infty$, we
arrive at the infinite Laplace equation on Schouten curvature $A$
\begin{equation}\label{Equ:infinite-laplace}
- \Delta_\infty u - \frac 12 |\nabla u|^2 A(\nabla u) u = -\frac 12 (|\nabla u|^2 A(\nabla u))_{\bar g} u^{-7}
\end{equation}
for $\bar g = u^{-4}\, g$. Recall $\Delta_\infty u = u_{ij}u_iu_j.$

We will consider solutions to the equation
\begin{equation}\label{Equ:p-superhar}
-\Delta_p u = \mu
\end{equation}
for a nonnegative Radon measure $\mu$ as $p$-superharmonic functions and appeal to nonlinear potential theory
in our study of the asymptotic behavior near singularities.

%**********************************************************

\section{Motivating problems in conformal geometry}\label{Sect:motivating}

\subsection{Motivations from the surface theory}\label{Subsec:surfaces}
Much of the developments in conformal geometry have been motivated by the tremendous success in the study of surfaces. We want to start with
the uniformization theorem (Klein 1883 Poincar\'{e} 1882 Koebe 1907), which states, the universal covering of a closed Riemann surface is conformally equivalent to  one of the three: $\mathbb{D}^2$, $\mathbb{R}^2$, $\mathbb{S}^2$. 

By the differential-geometric approach, this is to say that, on a closed Riemannian surface $(M^2, g)$, there is a conformal metric $e^{2u}g$
whose Gauss curvature is constant. Equivalently, one solves 
$$
-\Delta u + K =\kappa e^{2u}
$$
for $\kappa = -1, 0, 1$.
 
On the theory of open surfaces, there is the important theorem,
proved by Huber in \cite{Hu57}, which states, if a complete
noncompact surface $(M,g)$ satisfies 
\[
\int_{M}K^{-}d\mu_{g}<+\infty,
\]
then it is conformally equivalent to a compact Riemann surface with
finitely many points removed. Here $K^{-}=\max\{- K,0\}$ is the negative
part of Gaussian curvature $K$ and $d\mu_{g}$ is the volume form
of the metric $g$. Analytically this includes the statement: 

\begin{theorem*}
Let the conformal metric $\bar g= e^{2u}g$ on $\Omega\setminus S$ be geodesically complete near $S$. Suppose that 
$\int_{\Omega\setminus S} (K^-dvol)_{\bar g} <\infty$. Then $S$ consists of at most finitely many points.
\end{theorem*}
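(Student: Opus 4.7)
My plan is to interpret the Gauss curvature equation $-\Delta u=K_{\bar g}e^{2u}-K$ from \eqref{Equ:gauss-curvature-eq} on $\Omega\setminus S$ through the linear potential theory of Subsection~\ref{Subsec:semi-linear}, so that $u$ becomes (essentially) a superharmonic function on $\Omega$ whose Riesz measure detects $S$. The hypothesis $\int_{\Omega\setminus S}(K^-dvol)_{\bar g}<\infty$ is equivalent, via $dvol_{\bar g}=e^{2u}dvol_g$, to finite total mass of
\[
\mu_-:=K_{\bar g}^-\,e^{2u}\,dvol_g+K^+\,dvol_g
\]
on any relatively compact $\Omega'\Subset\Omega$ (after extending $\mu_-$ by zero across the closed set $S$). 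Writing $V$ for the logarithmic Newton potential of $\mu_-$ on $\Omega'$, the equation rearranges on $\Omega'\setminus S$ as
\[
-\Delta(u+V)=K_{\bar g}^+\,e^{2u}\,dvol_g+K^-\,dvol_g\geq 0,
\]
so $\tilde u:=u+V$ is superharmonic on $\Omega'\setminus S$.

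Next I extend $\tilde u$ across $S$. Completeness of $\bar g=e^{2u}g$ at $p\in S$ forces $\int_0^r e^{u(p+te^{i\theta})}\,dt=\infty$ along (almost) every direction, which together with the $L^1$ control on $-\Delta u$ coming from the Gauss equation upgrades to $u(x)\to+\infty$ at $S$ quasi-everywhere in direction. Since $V$ is superharmonic and hence locally bounded below, $\tilde u\to+\infty$ at $S$; declaring $\tilde u\equiv+\infty$ on $S$ produces a lower semicontinuous function on $\Omega'$ satisfying the super-mean-value inequality, hence a superharmonic function on $\Omega'$. The Riesz representation then writes $\tilde u=V_{\tilde\mu}+h$ with $\tilde\mu\geq 0$ a finite Borel measure on $\Omega'$ and $h$ harmonic; on $\Omega'\setminus S$ the Riesz measure equals $(K_{\bar g}^+e^{2u}+K^-)\,dvol_g$, while $\tilde\mu|_S$ carries the singular blow-up along $S$.

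The counting step is the atomic lower bound $\tilde\mu(\{p\})\geq 2\pi$ for every $p\in S\cap\Omega'$. If instead $\tilde\mu(\{p\})=\alpha<2\pi$, then the standard logarithmic-potential asymptotics at an atom yield $\tilde u(x)\leq\tfrac{\alpha}{2\pi}\log\tfrac{1}{|x-p|}+O(1)$ near $p$, and (by continuity of $V$ at generic $p$, modulo a standard approximation argument to handle the polar exceptional set) $u(x)\leq\tfrac{\alpha}{2\pi}\log\tfrac{1}{|x-p|}+O(1)$, whence $\int_0^r e^{u(p+te^{i\theta})}\,dt\lesssim\int_0^r t^{-\alpha/(2\pi)}\,dt<\infty$, contradicting completeness at $p$. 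Finiteness of $\tilde\mu(\Omega')$ then bounds $|S\cap\Omega'|\leq\tilde\mu(\Omega')/(2\pi)<\infty$, and a compact exhaustion of $\Omega$ together with a limit-point argument at any accumulation point of $S$ (where again completeness would require uncapped mass) yields $|S|<\infty$.

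I expect the chief obstacle to be the passage from metric completeness---an integrated radial statement---to the pointwise potential-theoretic asymptotic needed to activate the atomic mass bound, together with the simultaneous control of the regular potential $V$ and the harmonic remainder $h$ near each $p\in S$. These steps lean crucially on the linear potential-theoretic machinery of Section~\ref{Sect:PDE}: Riesz potential estimates, finiteness of potentials off polar sets, and the quantitative logarithmic-potential behavior at atoms.
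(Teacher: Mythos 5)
Your skeleton---absorb the controlled negative part of $-\Delta u$ into a logarithmic potential $V$ so that $\tilde u=u+V$ is superharmonic off $S$, extend across $S$, and then count atoms of mass at least $2\pi$ by playing the potential asymptotics against completeness---is the right one; it is essentially the strategy the authors follow for their higher-dimensional Huber-type theorems (the paper only quotes this two-dimensional statement from Huber and does not reprove it). However, two steps as written do not go through. The first is the extension across $S$, which hinges on $\liminf_{x\to s}u(x)=+\infty$ for every $s\in S$. Your derivation of this from completeness invokes ``the $L^1$ control on $-\Delta u$ coming from the Gauss equation,'' but the hypotheses only control the \emph{negative} part of $-\Delta u$, namely $K^-_{\bar g}e^{2u}+K^+$; the positive part $K^+_{\bar g}e^{2u}$ carries no a priori bound, and obtaining its local finiteness is exactly what the superharmonic extension is supposed to deliver, so the argument is circular. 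Moreover $\int_0^r e^{u(p+te^{i\theta})}\,dt=\infty$ along rays is far from implying pointwise blow-up: $e^u$ can fail to be integrable on every ray while $u$ stays bounded along a sequence tending to $p$. This is the genuinely hard step; note that in the authors' own $n$-dimensional version (Theorem \ref{Thm:intro-2}) the blow-up $\lim_{x\to S}v=+\infty$ is taken as a \emph{hypothesis} rather than deduced from completeness. (Granting the blow-up, the extension itself is fine---glue $\min(\tilde u,k)$ with the constant $k$ near $S$ and let $k\to\infty$; polarity of $S$ is then a consequence, not a prerequisite, whereas your ``l.s.c.\ plus super-mean-value'' claim is unjustified at points whose balls meet $S$.)

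The second gap is the atomic lower bound. The estimate $\tilde u(x)\le\frac{\alpha}{2\pi}\log\frac{1}{|x-p|}+O(1)$ near an atom of mass $\alpha$ is false in general: the potential of the non-atomic part of $\tilde\mu$ near $p$ (for instance, smaller atoms accumulating at $p$) can dominate $\log\frac{1}{|x-p|}$ along sequences. The correct statement is Theorem \ref{Thm:l-asym}: the ratio converges to $\mu(\{p\})$ only off a set $A$ that is $n$-thin at $p$ in the sense of Definition \ref{Def:l-thin}, and such a set is much larger than a polar set, so ``a standard approximation argument to handle the polar exceptional set'' does not cover it. To contradict completeness you then need an entire segment from $p$ avoiding $A\cup(S\setminus\{p\})$, because the length integral must be computed along a single curve lying in $\Omega\setminus S$; ``quasi-everywhere'' or ``almost every direction'' is not sufficient, and the missing device is precisely the escaping-ray Lemma \ref{Lem:l-thin-consequence}. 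With the thin-set asymptotics and the escaping ray in hand, your computation $\int_0^r e^u\,dt\lesssim\int_0^r t^{-\alpha/2\pi-\varepsilon}\,dt$ does force $\alpha\ge 2\pi$ at every $p\in S$, and finiteness of the Riesz measure then bounds the cardinality of $S$ as you indicate.
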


%%%%%%%%%%%%%%%%%%%%%%%

\subsection{Interplays of analysis and conformal geometry in higher dimensions} 
There have been many interesting work to establish theorems of 
Huber-type in higher dimensions. 
To motivate, we want to make two remarks.
First, in two dimensions, open surfaces are well understood topologically. However, 
in higher dimensions, the topology as well as the local and global conformal structure 
become much more complicated. Secondly, in higher dimensions, Ricci curvature tensor 
is genuinely no longer a scalar. Therefore, for applications in geometry, we will focus on 
locally conformally flat manifolds, particularly those whose development maps are injective.  

One may focus on locally conformally flat manifolds as they are more closely related to theories of surfaces. 
Suppose that $(M^n, g)$ is a locally conformally flat manifold and that the conformal immersion from a covering
$(\tilde M^n, \tilde g)$ to $(\mathbb{S}^n, g_{\mathbb{S}})$ is injective.

\hskip 1.6in\begin{tikzpicture}
    \node (x) at (0,0) {$\tilde M^n$};
    \node (y) at (5,0) {$\mathbb{S}^n$};
    \node (z) at (0, -2) {$M^n$};    
    \path (x) edge node [above] {$\phi$} (y);
    \path (x) edge node [left] {$\pi$} (z);
\end{tikzpicture}
\vskip 0.1in

Then, on $\phi(\tilde M^n)\subset \mathbb{S}^n$, there is a complete conformal metric $(\phi^{-1})^*\tilde g = e^{2u} g_{\mathbb{S}}$. One is
interested in the size of $\mathbb{S}^n\setminus\phi(\tilde M^n)$, or specifically, the Hausdorff dimension of
$\mathbb{S}^n\setminus\phi(\tilde M^n)$. We want to confirm that, in these situations, there are strong correlations between the Hausdorff dimension
of $\mathbb{S}^n\setminus\phi(\tilde M^n)$, the topology of $M^n$, and the curvature of the metric $g$.  
Specifically,  we will introduce notions of the positivity of curvature tensors and their impact to the homology groups and
homotopy groups.

One way to measure the positivity of curvature and derive topological consequences is to consider (scalar) curvatures of higher order and nonlinear 
nature and the associated curvature equations like the semilinear and quasilinear equations mentioned in the previous section in the spirit of the
uniformization theorem on surfaces. The Yamabe problem and its generalizations have been introduced and generated huge interest, which has been one of the most significant and influential subjects in geometric analysis and geometric partial differential equations. We believe the newly introduced 
$p$-Laplace equations in conformal geometry will add fuel and contribute further developments.

For instance, as a way to gauge the positivity of Ricci curvature tensor, we propose here to use the cones 
\begin{equation}\label{Equ:A-p-cone}
\mathcal{A}^{(p)}=\{\lambda\in \mathbb{R}^n: \min_k\{(p-2)\lambda_k + \sum_{i=1}^n \lambda_i\}\geq 0\}
\end{equation}
for $p\in (1, \infty)$ to describe the positivity of the intermediate Schouten curvature tensor $A^{(p)}$ when $\{\lambda_i\}$ stands for the
eigenvalues of the Schouten curvature tensor $A$. 

To illustrate how effective this approach can lead to vanishing theorems on topology,
we recall the cones
\begin{equation}\label{Equ:R-r-cone}
\mathcal{R}^{(r)} =  \{\lambda\in \mathbb{R}^n: \min \{(n-r)\sum_{k=1}^r \lambda_{i_k} + r \sum_{k=r+1}^n \lambda_{i_k}\}\geq 0\}
\end{equation}
introduced for dealing with the terms in the Bochner formula on $r$-forms on locally conformally flat $n$-manifolds, where $\min$ is taken for
all possible re-arrangements $\{\lambda_{i_1}, \lambda_{i_2}, \cdots, \lambda_{i_n}\}$ of $(\lambda_1, \lambda_2, \cdots, \lambda_n)$. 
We observe that

\begin{lemma} (\cite{LMQZ1})
$$
\mathcal{A}^{(p)} \subset \mathcal{A}^{(q)} \quad \text{ for $2 \leq q < p < \infty$}
$$ 
$$
\mathcal{R}^{(s)} \subset \mathcal{R}^{(r)} \quad \text{ for $0 < s \leq r \leq \frac n2$}
$$
and
$$
\mathcal{A}^{(p)}  \subset \mathcal{R}^{(r)} \quad \text{ for $\frac {n-p}2 + 1\leq r \leq \frac n2$}.
$$
Obviously $\mathcal{A}^{(2)} = \mathcal{R}^{(\frac n2)}$ is the baseline.
\end{lemma}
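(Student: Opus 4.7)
The plan is to reduce each ``min over rearrangements'' in the definitions to a closed form in the sorted eigenvalues $\lambda_1 \leq \lambda_2 \leq \cdots \leq \lambda_n$, and then handle the three inclusions by a short case analysis on the sign of $\lambda_1$. Writing $S = \sum_i \lambda_i$, I first note that since $p-2 > 0$, the minimum in \eqref{Equ:A-p-cone} is attained at $k = 1$, so
$$
\mathcal{A}^{(p)} = \{\lambda : (p-2)\lambda_1 + S \geq 0\}.
$$
Rewriting the functional in \eqref{Equ:R-r-cone} as $rS + (n-2r)\sum_{k=1}^r \lambda_{i_k}$ and using $n - 2r \geq 0$ (from $r \leq n/2$), the minimum is achieved by taking the bottom $r$ eigenvalues, so
$$
\mathcal{R}^{(r)} = \{\lambda : rS + (n-2r)\sigma_r \geq 0\}, \qquad \sigma_r = \lambda_1 + \cdots + \lambda_r.
$$

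With these reductions, the first and third inclusions are short. For $\mathcal{A}^{(p)} \subset \mathcal{A}^{(q)}$ with $2 \leq q < p$: if $\lambda_1 \geq 0$ then $S \geq 0$ and the conclusion is automatic since $q \geq 2$; if $\lambda_1 < 0$, then $S \geq -(p-2)\lambda_1$ gives $(q-2)\lambda_1 + S \geq (q-p)\lambda_1 \geq 0$ because $q < p$ and $\lambda_1 < 0$. For $\mathcal{A}^{(p)} \subset \mathcal{R}^{(r)}$ with $\frac{n-p}{2} + 1 \leq r \leq \frac{n}{2}$, the inequalities $\sigma_r \geq r\lambda_1$ and $n - 2r \geq 0$ give
$$
rS + (n-2r)\sigma_r \geq r\bigl[S + (n-2r)\lambda_1\bigr];
$$
then $\lambda_1 \geq 0$ makes the bracket nonnegative, while $\lambda_1 < 0$ combined with the hypothesis $n - 2r \leq p - 2$ yields $(n-2r)\lambda_1 \geq (p-2)\lambda_1$, so $S + (n-2r)\lambda_1 \geq S + (p-2)\lambda_1 \geq 0$ by $\lambda \in \mathcal{A}^{(p)}$.

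For the remaining inclusion $\mathcal{R}^{(s)} \subset \mathcal{R}^{(r)}$, the cleanest route is to introduce the ``bottom'' and ``top'' averages $A_r = \sigma_r / r$ and $B_r = (S - \sigma_r)/(n-r)$, so that $rS + (n-2r)\sigma_r = r(n-r)(A_r + B_r)$ and the condition becomes $A_r + B_r \geq 0$. Both sequences are non-decreasing in $r$: the update formulas together with $\lambda_{r+1} \geq \lambda_r \geq A_r$ and $\lambda_{r+1} \leq B_r$ yield $A_{r+1} \geq A_r$ and $B_{r+1} \geq B_r$. Hence $A_s + B_s \geq 0$ forces $A_r + B_r \geq 0$ for $s \leq r \leq n/2$. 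The baseline $\mathcal{A}^{(2)} = \mathcal{R}^{(n/2)}$ falls out since both conditions collapse to $S \geq 0$. I do not foresee a real obstacle here; the only subtlety is keeping track of signs in reducing the min-over-rearrangements for $\mathcal{R}^{(r)}$, which is precisely where the hypothesis $r \leq n/2$ (equivalently $n - 2r \geq 0$) enters.
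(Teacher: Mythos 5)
Your proof is correct and complete: the reduction of each min-over-rearrangements to the sorted eigenvalues (using $p-2>0$ for $\mathcal{A}^{(p)}$ and $n-2r\geq 0$ for $\mathcal{R}^{(r)}$), the sign case analysis on $\lambda_1$ for the first and third inclusions, and the monotonicity of the bottom/top averages $A_r$, $B_r$ for the second all check out, as does the baseline identification $\mathcal{A}^{(2)}=\mathcal{R}^{(n/2)}=\{\sum_i\lambda_i\geq 0\}$. Note that this survey states the lemma without proof, deferring to \cite{LMQZ1}, so there is no in-paper argument to compare against; your elementary verification is exactly the kind of argument the statement calls for.
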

Consequently, we obtain

\begin{theorem} (\cite{LMQZ1})
Let $(M^n, g)$ be a compact locally conformally flat manifold with $A^{(p)}\geq 0$ for $p\in [2, n)$. 
Suppose that the scalar curvature is positive somewhere on $M^n$. Then, for $\frac {n-p}2 +1\leq k\leq \frac {n+p}2-1$, the 
Betti numbers $\beta_k=0$, unless $(\tilde{M^n}, g) \overset{\text{isometric}}{\sim} \mathbb{H}^k\times \mathbb{S}^{n-k}$.
\end{theorem}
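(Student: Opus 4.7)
The natural approach is a Bochner-technique argument. Since $M^n$ is compact, by Hodge theory $\beta_k = \dim \mathcal{H}^k(M^n,g)$, the space of harmonic $k$-forms, so it suffices to show that under the stated curvature hypothesis every harmonic $k$-form must vanish, unless the rigid model occurs. The plan is to apply the Weitzenb\"ock formula on $k$-forms, control the curvature term via the cone $\mathcal{R}^{(k)}$, and then use the lemma above to feed in the assumption $A^{(p)}\ge 0$.

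First I would recall that on a locally conformally flat $n$-manifold, the curvature operator appearing in the Weitzenb\"ock formula for $k$-forms is diagonalized (in a suitable frame that diagonalizes the Schouten tensor) by the very expression $(n-k)\sum_{j=1}^k \lambda_{i_j} + k\sum_{j=k+1}^n \lambda_{i_j}$, up to a positive multiplicative constant depending on $n$ and $k$; this is precisely the reason the cones $\mathcal{R}^{(r)}$ were introduced. Hence for a harmonic $k$-form $\omega$,
\begin{equation*}
0 = \int_{M^n}\langle \Delta_H\omega,\omega\rangle\, dv_g = \int_{M^n}|\nabla \omega|^2\, dv_g + \int_{M^n}\mathcal{F}_k(\lambda_1,\dots,\lambda_n)|\omega|^2\, dv_g,
\end{equation*}
where $\mathcal{F}_k$ is the above symmetric function of the eigenvalues of $A$. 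For $k$ in the range $\frac{n-p}{2}+1 \le k \le \frac{n}{2}$, the lemma gives $\mathcal{A}^{(p)} \subset \mathcal{R}^{(k)}$, so the hypothesis $A^{(p)}\ge 0$ forces $\mathcal{F}_k \ge 0$ pointwise; for $\frac{n}{2} < k \le \frac{n+p}{2}-1$ I extend the range by Poincar\'e duality $\beta_k = \beta_{n-k}$.

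From the integrated identity I conclude that $\omega$ is parallel and $\mathcal{F}_k \equiv 0$ on the support of $\omega$, which by compactness and parallelness is all of $M^n$. Now comes the rigidity step, which I expect to be the main obstacle. Pointwise vanishing $\mathcal{F}_k(\lambda)=0$ combined with $A^{(p)}\ge 0$ forces the eigenvalues of $A$ to split into exactly two groups of sizes $k$ and $n-k$, with values of opposite sign satisfying $(n-k)\mu + k\nu = 0$, and parallelness of $\omega$ forces the corresponding eigendistributions to be parallel. The de Rham decomposition then splits the universal cover isometrically into two factors; the constant sectional curvatures of the two factors are then determined, via the Schouten tensor formula, to be of opposite sign with ratio giving a hyperbolic factor of dimension $k$ and a spherical factor of dimension $n-k$. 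Finally I would use the positivity of scalar curvature somewhere, together with the fact that $R$ is constant on the product (being determined by $\mu,\nu$ and the dimensions via the trace identity), to rule out any degenerate configuration and produce the conclusion $(\tilde M^n, g) \cong \mathbb{H}^k \times \mathbb{S}^{n-k}$.

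The technical heart is the rigidity: verifying that equality in the Weitzenb\"ock/Bochner inequality, together with the cone constraint $A^{(p)}\ge 0$, really does pin down the eigenvalue structure to the two-step pattern required by a product model, and that the parallel $k$-form together with the parallel eigendistributions of $A$ promote this algebraic splitting to a global isometric one via de Rham. Everything else is either standard Bochner manipulation, Hodge theory, or Poincar\'e duality.
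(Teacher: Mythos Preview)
Your approach is essentially the same as the paper's: the paper gives only a one-line sketch, stating that the proof uses the Bochner formula on $r$-forms on locally conformally flat manifolds (citing \cite{GLW, Nay97}) together with the preceding lemma $\mathcal{A}^{(p)}\subset\mathcal{R}^{(r)}$ for $\frac{n-p}{2}+1\le r\le\frac{n}{2}$, exactly as you do. Your proposal simply fleshes out the standard Hodge--Bochner argument and the rigidity discussion that the paper leaves to those references; the one minor inaccuracy is writing the curvature term as $\mathcal{F}_k(\lambda)|\omega|^2$, since the Weitzenb\"ock operator on $k$-forms is not a scalar multiple of the identity but is diagonalized on decomposable forms in the eigenframe of $A$, though this does not affect the argument once one works with $\langle\mathcal{R}(\omega),\omega\rangle$.
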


The proof uses The Bochner formula on $r$-forms (cf. \cite{GLW, Nay97})
$$
\Delta \omega = \nabla^*\nabla \omega + \mathcal{R}(\omega)
$$
$$\mathcal{R}(\omega) = ((n-r)\sum_{i=1}^r\lambda_i + r\sum_{i=r+1}^n\lambda_i)\omega$$
for $\omega = \omega_1\wedge\omega_2\cdots\wedge\omega_r$ and $\{\omega_k\}$ is the orthonormal basis under which the 
Schouten curvature tensor $A$ is diagonalized on locally conformally flat $n$-manifolds.

In the light of \eqref{Equ:p-laplace}, we therefore want to use asymptotic behavior $p$-superharmonic functions to estimate the
size of singularities and derive consequences on the homotopy groups when assuming $A^{(p)} \geq 0$ for $p\in [2, \infty)$.
 
 %**************************************************
 
 \section{Linear potentials and applications in conformal geometry}\label{Sect:Riesz}
 
In this section, we give an exposition of our recent work on linear potential theory in conformal geometry. We apply linear potential theory to study partial differential equations arising from conformal geometry and, in particular, the problems related to the dimension of the boundary of a domain
that admits certain complete conformal metric. 

\subsection{Riesz potential, capacity, and thin set}\label{Subsec:linear-potential}

Let $\Omega$ be a bounded open subset in the Euclidean space $\mathbb{R}^n$. Then, for $x\in \Omega$, let 
\begin{equation}\label{Equ:potential-euclidean}
R^{\alpha, \Omega}_\mu (x) = \left\{\aligned \int_\Omega \frac 1{|x-y|^{n-\alpha}} d\mu(y)  \quad & \text{ when 
$\alpha \in (1, n)$}\\ \int_\Omega \log \frac D{|x-y|} d\mu(y) \quad & \text{ when $\alpha = n$}
\endaligned\right.
\end{equation}
for a Radon measure $\mu$ on $\Omega$, where $D$ is the diameter of $\Omega$. Let $E$ be a subset in $\Omega$ and $\Omega$ be a bounded
open subset in $\mathbb{R}^n$. For $\alpha\in (1, n]$,  we define the Riesz capacity by
\begin{equation}\label{Equ:linear-capacity}
C^\alpha_{\mathcal{L}} (E, \Omega) = \inf \{ \mu(\Omega): \quad \aligned \mu \geq 0 & \text{ Radon measure on $\Omega$}\\
R^{\alpha, \Omega}_\mu & (x) \geq 1  \text{ for all $x\in E$}\endaligned\}.
\end{equation}
The following basic properties are easy to prove (cf. \cite{Mi96})
\begin{lemma}\label{Lem:l-basic property}

Let $C^\alpha_{\mathcal{L}}$ be the Riesz capacity defined as in \eqref{Equ:linear-capacity} for $\alpha\in(1,n]$. Then 
\begin{enumerate}
\item $C^{\alpha}_{\mathcal{L}}$ is nondecreasing in $E$, that is 
\[
C^{\alpha}_{\mathcal{L}}(E_{1},\Omega)\le C^{\alpha}_{\mathcal{L}}(E_{2},\Omega)
\]
 when $E_{1}\subset E_{2}\subset\Omega\subset\mathbb{R}^{n}$. 
\item $C^{\alpha}_{\mathcal{L}}$ is countably subadditive, that is 
\[
C^{\alpha}_{\mathcal{L}}(\cup_{i=1}^{\infty}E_{i},\Omega)\le\sum_{i=1}^{\infty}C^{\alpha}_{\mathcal{L}}(E_{i},\Omega)
\]
for subsets $E_{i}\subset\Omega$.
\item For a positive number $\lambda$, let
\[
A_{\lambda}=\{\lambda x:x\in A\}
\]
for any subset $A$ of $\mathbb{R}^{n}.$ Then, for $\alpha\in(1,n]$,
\[
C^{\alpha}_{\mathcal{L}}(E_{\lambda},\Omega_{\lambda})=\lambda^{n-\alpha}C^{\alpha}_{\mathcal{L}}(E,\Omega).
\]
\item Suppose 
\[
\Phi:\Omega\to\Omega
\]
 is a contractive map, that is
\[
|\Phi(x)-\Phi(y)|\le|x-y|
\]
 for all $x,y\in\Omega$. Then, for $\alpha\in(1,n],$
\[
C^{\alpha}_{\mathcal{L}}(\Phi(E),\Omega)\le C^{\alpha}_{\mathcal{L}}(E,\Omega)
\]
for any subset $E\subset\Omega$.
\item For $\alpha\in(1,n],$
\[
C^{\alpha}_{\mathcal{L}}(\partial B_{1}(0),B_{2}(0))=c(n,\alpha)
\]
for some positive constant $c(n,\alpha).$
\end{enumerate}
\end{lemma}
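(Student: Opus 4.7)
The plan is to handle the five items in order, all directly from the definition of the Riesz capacity and elementary measure-theoretic manipulations. Items (1) and (2) follow by transport of admissible measures, (3) and (4) by scaling and pushforward, and (5) requires an upper bound via an explicit symmetric construction and a lower bound via a duality/Fubini argument. The main obstacle will be (5), where some integrability/positivity estimate of kernel integrals on the unit sphere must be carried out.

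For (1), note that the class of measures admissible for $E_2$ is a subclass of those admissible for $E_1$ (more constraints, so smaller class), hence the infimum defining $C^\alpha_{\mathcal{L}}(E_1,\Omega)$ is at most that for $E_2$. For (2), given $\epsilon>0$, choose $\mu_i$ admissible for $E_i$ with $\mu_i(\Omega)\le C^\alpha_{\mathcal{L}}(E_i,\Omega)+\epsilon/2^i$ and take $\mu=\sum_i\mu_i$; for any $x\in\cup_iE_i$ there is some $j$ with $x\in E_j$, so $R^{\alpha,\Omega}_\mu(x)\ge R^{\alpha,\Omega}_{\mu_j}(x)\ge 1$, proving admissibility. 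Letting $\epsilon\to 0$ yields subadditivity.

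For (3), given a measure $\mu$ admissible for $E$, define the rescaled measure $\mu_\lambda$ on $\Omega_\lambda$ by $\mu_\lambda(A)=\lambda^{n-\alpha}\mu(\lambda^{-1}A)$. A change of variables gives $R^{\alpha,\Omega_\lambda}_{\mu_\lambda}(\lambda x)=R^{\alpha,\Omega}_{\mu}(x)$ when $\alpha\in(1,n)$, and the same identity when $\alpha=n$ because the diameter scales by $\lambda$ as well and the log absorbs it. Hence $\mu_\lambda$ is admissible for $E_\lambda$, and $\mu_\lambda(\Omega_\lambda)=\lambda^{n-\alpha}\mu(\Omega)$. Taking infima and applying the same argument with $\lambda^{-1}$ yields equality. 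For (4), given $\mu$ admissible for $E$, the pushforward $\nu=\Phi_*\mu$ satisfies $\nu(\Omega)\le\mu(\Omega)$, and for $x\in E$ the contractivity $|\Phi(x)-\Phi(y)|\le|x-y|$ and monotonicity of $t\mapsto t^{-(n-\alpha)}$ (respectively $t\mapsto\log(D/t)$) gives $R^{\alpha,\Omega}_\nu(\Phi(x))\ge R^{\alpha,\Omega}_\mu(x)\ge 1$, so $\nu$ is admissible for $\Phi(E)$.

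The substantive step is (5). For the upper bound, by rotational symmetry the normalized surface measure $\sigma_0$ on $\partial B_1(0)$ produces a potential $R^{\alpha,B_2(0)}_{\sigma_0}(x)$ that is constant on $\partial B_1(0)$, so rescaling $\sigma_0$ to equal $1$ on $\partial B_1(0)$ gives an admissible measure of finite total mass, hence $C^\alpha_{\mathcal{L}}(\partial B_1(0),B_2(0))<\infty$. For the lower bound, for an arbitrary admissible $\mu$ integrate the defining inequality $R^{\alpha,\Omega}_\mu\ge 1$ against the normalized surface measure $\sigma_0$ on $\partial B_1(0)$ and swap the order of integration (Fubini) to obtain
\[
1\le\int_{\partial B_1(0)}\!\!R^{\alpha,\Omega}_\mu(x)\,d\sigma_0(x)=\int_{\Omega}K(y)\,d\mu(y),
\]
where $K(y)=\int_{\partial B_1(0)}|x-y|^{-(n-\alpha)}d\sigma_0(x)$ (with the logarithmic analog when $\alpha=n$). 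The key analytic point—and the part I expect to be the main obstacle—is to check that $K(y)$ is uniformly bounded on $B_2(0)$: near any $y_0\in\partial B_1(0)$ one reduces to estimating $\int_{B_r^{n-1}}|u|^{-(n-\alpha)}du$, which in $(n-1)$-dimensional polar coordinates reduces to $\int_0^r s^{\alpha-2}ds$, convergent exactly because $\alpha>1$. Calling this uniform bound $C(n,\alpha)$, one concludes $\mu(\Omega)\ge 1/C(n,\alpha)>0$, yielding the positive lower bound and completing the proof.
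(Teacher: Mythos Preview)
Your argument is correct. The paper itself does not give a proof of this lemma at all: it simply declares the properties ``easy to prove'' and refers the reader to Mizuta's book \cite{Mi96}. There is therefore nothing to compare against, and your write-up already exceeds what the paper offers.

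For the record, each of your steps checks out. In (1)--(2) the transport-of-admissible-measures argument is the standard one; note that for $\alpha=n$ the kernel $\log(D/|x-y|)$ is nonnegative on $\Omega\times\Omega$ since $D$ is the diameter, so the superposition in (2) is legitimate. In (3) your observation that the diameter scales by $\lambda$ and hence the $\log$ kernel is unchanged under the rescaled pushforward is exactly the point. In (4) the pushforward $\nu=\Phi_*\mu$ is Radon because a contractive map is continuous, and in fact $\nu(\Omega)=\mu(\Omega)$ (not merely $\le$) since $\Phi^{-1}(\Omega)=\Omega$; this of course does not affect the conclusion. In (5) your integrability check $\int_0^r s^{\alpha-2}\,ds<\infty$ for $\alpha>1$ is precisely what makes both the upper bound (finiteness of $R^{\alpha,B_2(0)}_{\sigma_0}$ on $\partial B_1(0)$) and the lower bound (uniform boundedness of $K(y)=\int_{\partial B_1(0)}|x-y|^{-(n-\alpha)}d\sigma_0(x)$ on $B_2(0)$, via the estimate $(1-r)^2+\theta^2\ge\theta^2$ in the spherical parametrization) go through; the logarithmic case $\alpha=n$ follows by the same localization with $\int_0^r\log(1/s)\,s^{n-2}\,ds<\infty$.
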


Thin sets with respect to the Riesz capacity $C^\alpha_{\mathcal{L}}$ are defined through dyadic annuli:
$$
\omega^\delta_i (p) = \{ x\in \mathbb{R}^n: |x - p| \in [2^{-i}\delta,  2^{-i+1}\delta]\}
$$
$$
\Omega^\delta_i(p) = \{x\in \mathbb{R}^n: |x - p| \in (2^{-i-1}\delta , 2^{-i+ 2} \delta)\}.
$$

\begin{definition}\label{Def:l-thin}
Let $E$ be a subset in the Euclidean space $\mathbb{R}^n$ and $p\in \mathbb{R}^n$. 
The subset $E$ is said to be $\alpha$-thin with respect to the Riesz capacity $C^{\alpha}_{\mathcal{L}}$ at the point $p$ for $\alpha \in (1, n)$ if 
$$
\sum_{i\geq 1} \frac { C^{\alpha}_{\mathcal{L}} (E\bigcap \omega^\delta_i (p), \Omega^\delta_i (p))}
{C^{\alpha}_{\mathcal{L}} (\partial B_{2^{-i}\delta} (p), B_{2^{-i+1}\delta}(p))}
< \infty
$$  
for some small $\delta>0$.  The subset $E$ is said to be $n$-thin at $p$ if 
$$
\sum_{i\geq 1} i C^{n}_{\mathcal{L}} (E\bigcap \omega^\delta_i(p), \Omega^\delta_i(p)) < \infty
$$
for some small $\delta > 0$. 
\end{definition}

A simple and important fact (see also \cite{Mi96}) is 
\begin{lemma}\label{Lem:l-thin-consequence}
Let $E$ be a subset in the Euclidean space $\mathbb{R}^n$ and $p\in \mathbb{R}^n$. Suppose that $E$ is $\alpha$-thin at the 
point $p$ for $\alpha \in (1, n]$. Then there is always a ray from $p$ that avoids $E$ at least within some small ball at $p$.
\end{lemma}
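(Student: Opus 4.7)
After translating, assume $p=0$, and write $\Pi(x)=x/|x|$ for radial projection. For each $i\ge 1$ set
\[
F_i = \Pi\bigl(E\cap \omega_i^\delta(0)\bigr), \qquad F = \bigcup_{i\ge 1} F_i.
\]
Any $\theta\in \mathbb{S}^{n-1}\setminus F$ produces a ray $\{t\theta:0<t<\delta\}$ disjoint from $E$, so it suffices to prove, for some sufficiently small $\delta>0$, that $F\subsetneq \mathbb{S}^{n-1}$. The plan is to control the Riesz capacity $C^\alpha_{\mathcal{L}}(F, B_2(0))$ and compare it with the positive constant $c(n,\alpha)=C^\alpha_{\mathcal{L}}(\mathbb{S}^{n-1}, B_2(0))$ supplied by item (5) of Lemma~\ref{Lem:l-basic property}.

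\textbf{Per-annulus capacity estimate.} The core estimate I am aiming for is
\[
C^\alpha_{\mathcal{L}}(F_i, B_2(0)) \;\le\; C(n,\alpha)\cdot \frac{C^\alpha_{\mathcal{L}}\bigl(E\cap\omega_i^\delta(0),\Omega_i^\delta(0)\bigr)}{C^\alpha_{\mathcal{L}}\bigl(\partial B_{2^{-i}\delta}(0), B_{2^{-i+1}\delta}(0)\bigr)},
\]
with an analogous logarithmic version when $\alpha=n$. I would prove it in two steps. First, rescale by the dilation $x\mapsto (2^i/\delta)x$; by item (3) of the lemma, both numerator and denominator on the right pick up the same factor $(2^i/\delta)^{n-\alpha}$, so the ratio is unchanged, while $F_i$ itself is unaffected because $\Pi$ is scale-invariant, and $\omega_i^\delta(0)$ is pulled back to the fixed annulus $\{1\le|x|\le 2\}$. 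Second, on this fixed annulus $\Pi$ is Lipschitz with a universal constant $K(n)$; writing $\Pi = K\cdot(\Pi/K)$ as a dilation composed with a contraction and invoking items (3) and (4) of the lemma (or equivalently pushing forward an admissible measure for $E\cap\omega_i^\delta(0)$ and checking its Riesz potential directly) yields the claim.

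\textbf{Summation and shrinking $\delta$.} Combining this with countable subadditivity, item (2) of the lemma, gives
\[
C^\alpha_{\mathcal{L}}(F, B_2(0)) \;\le\; C(n,\alpha) \sum_{i\ge 1} \frac{C^\alpha_{\mathcal{L}}(E\cap\omega_i^\delta(0),\Omega_i^\delta(0))}{C^\alpha_{\mathcal{L}}(\partial B_{2^{-i}\delta}(0),B_{2^{-i+1}\delta}(0))}.
\]
The $\alpha$-thinness hypothesis makes this series converge for some $\delta_0>0$. Replacing $\delta_0$ by $\delta=2^{-k}\delta_0$ identifies $\omega_i^\delta(0)$ with $\omega_{i+k}^{\delta_0}(0)$, so the new series is the $k$-tail of a convergent series and tends to $0$ as $k\to\infty$. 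A parallel reindexing handles the case $\alpha=n$, where the thinness definition carries the extra factor $i$: the corresponding series also tends to $0$ as $\delta$ shrinks because $\sum_{j\ge k+1} jr_j(\delta_0)\to 0$. Choosing $k$ large enough forces the right-hand side above to be strictly less than $c(n,\alpha)$, so $F\subsetneq \mathbb{S}^{n-1}$ and any $\theta$ in the complement supplies the desired ray.

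\textbf{Main obstacle.} The most delicate step is the per-annulus capacity comparison. The Lipschitz-plus-scaling bookkeeping must produce a constant $C(n,\alpha)$ uniform in $i$, for otherwise the summation collapses. The case $\alpha=n$ is further complicated by the fact that the logarithmic kernel $\log(D/|x-y|)$ scales additively rather than homogeneously, so the dilation step requires an additive correction; this is precisely the reason the thinness definition for $\alpha=n$ includes the extra weight $i$, which compensates for the logarithmic shift and rescues the argument.
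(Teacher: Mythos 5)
Your argument is correct and is essentially the standard projection-onto-the-sphere proof that the paper itself defers to \cite{Mi96}: rescale each dyadic annulus to unit size, transport admissible measures under the ($1$- or $2$-Lipschitz) radial projection, sum by countable subadditivity, and shrink $\delta$ so that the tail of the convergent thinness series falls below $c(n,\alpha)=C^\alpha_{\mathcal{L}}(\partial B_1(0),B_2(0))$. One small correction to your closing remark: for $\alpha=n$ the weight $i$ in Definition \ref{Def:l-thin} is not what rescues this lemma --- after rescaling, the logarithmic capacity is scale-invariant and the per-annulus comparison holds with a uniform constant (the additive shift of the kernel is absorbed because admissible measures of small total mass lose only $O(\|\mu\|)$ of potential), so the unweighted convergence $\sum_i C^n_{\mathcal{L}}(E\cap\omega_i^\delta,\Omega_i^\delta)<\infty$, which follows a fortiori from the hypothesis, already suffices; the weight $i$ is needed for the asymptotic statement of Theorem \ref{Thm:l-asym}, not for the existence of the ray.
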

 
 %%%%%%%%%%%%%%%%%%%%%%%%%%%%%%%%%%%%%%%%%%
 
 \subsection{Singularities of superharmonic functions under the fine topology}\label{Subsec:Riesz-estimates}

What we want to derive is the asymptotic behavior of superharmonic functions near singularities outside thin sets.
\begin{theorem}\label{Thm:l-asym} 
Suppose $\mu$ be a finite nonnegative Radon measure on a bounded domain $\Omega\subset R^n$.  
Then, for $p\in \Omega$, where $R^{\alpha, \Omega}_\mu(p) = \infty$, there is a subset $A$ that is $\alpha$-thin at $p$ such that
$$
\lim_{x \to p \text{ and } x \in R^n\setminus A} \frac {R^{\alpha, \Omega}_\mu (x)}{|x-p|^{\alpha - n}} = \mu(\{p\}).
$$
for $\alpha\in (1, n)$ and
$$
\lim_{x \to p \text{ and } x \in R^n\setminus A} \frac {R^{n, \Omega}_\mu (x)}{\log \frac 1{|x - p|}} = \mu(\{p\})
$$
for $\alpha = n$.
\end{theorem}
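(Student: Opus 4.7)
The plan is to decompose $\mu$ as the sum of its atom at $p$ and a remainder $\nu$ with $\nu(\{p\})=0$. The atom contributes exactly $\mu(\{p\})\,|x-p|^{\alpha-n}$, respectively $\mu(\{p\})\log(D/|x-p|)$, to $R^{\alpha,\Omega}_\mu$, which already produces the claimed limit. So it suffices to produce an $\alpha$-thin set $A$ at $p$ outside of which $R^{\alpha,\Omega}_\nu(x)=o(|x-p|^{\alpha-n})$, respectively $o(\log(D/|x-p|))$, as $x\to p$. When $R^{\alpha,\Omega}_\nu(p)<\infty$ this is a standard fine-continuity statement for Riesz potentials of a measure with no mass at $p$, so the crucial case is $R^{\alpha,\Omega}_\nu(p)=\infty$.

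For $x\in\omega_k^\delta(p)$ split $\nu=\nu_k'+\nu_k''$, where $\nu_k'=\nu|_{\Omega_k^\delta(p)}$. For $y\in\mathrm{supp}(\nu_k'')$ one has $|x-y|\ge c\,\max(|x-p|,|y-p|)$; slicing dyadically by $\omega_j^\delta(p)$ for $j\le k-2$ and $j\ge k+2$ and using $\sum_j\nu(\omega_j^\delta(p))\le\nu(\Omega)<\infty$ together with $\nu(\{p\})=0$, an elementary dominated-convergence argument gives
\[
|x-p|^{n-\alpha}\,R^{\alpha,\Omega}_{\nu_k''}(x)\longrightarrow 0\quad\text{as }k\to\infty,
\]
uniformly for $x\in\omega_k^\delta(p)$ and with no exceptional set; the analogue with $\log(D/|x-p|)$ handles $\alpha=n$.

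The inner part $\nu_k'$ is controlled by capacity. For each $\epsilon>0$ set
\[
E_k^{\epsilon}=\bigl\{x\in\omega_k^\delta(p):\,R^{\alpha,\Omega}_{\nu_k'}(x)>\epsilon\,(2^{-k}\delta)^{\alpha-n}\bigr\},
\]
with threshold $\epsilon\log(D/(2^{-k}\delta))$ when $\alpha=n$. Testing the capacity $C^{\alpha}_{\mathcal{L}}$ against the admissible measure obtained by rescaling $\nu_k'$ by the threshold, and using Lemma 4.1 (3) and (5) for the denominator, we obtain
\[
\frac{C^{\alpha}_{\mathcal{L}}\bigl(E_k^\epsilon,\Omega_k^\delta(p)\bigr)}{C^{\alpha}_{\mathcal{L}}\bigl(\partial B_{2^{-k}\delta}(p),B_{2^{-k+1}\delta}(p)\bigr)}\le \frac{C(n,\alpha)}{\epsilon}\,\nu\bigl(\Omega_k^\delta(p)\bigr),
\]
with an extra factor $k$ on both sides in the logarithmic regime so that the $n$-thinness series differs only by a harmless constant. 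Summation on $k$ is then controlled by $\nu(\Omega)<\infty$, hence $\bigcup_k E_k^\epsilon$ is $\alpha$-thin at $p$ for every $\epsilon>0$.

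A diagonal construction assembles a single thin set. Choose integers $K_m\uparrow\infty$ so that the tail $\sum_{k\ge K_m}$ of the thinness series for $\epsilon_m=1/m$ is bounded by $2^{-m}$, and let $A=\bigcup_{m}\bigcup_{k\ge K_m}E_k^{1/m}$. Countable subadditivity (Lemma 4.1(2)) implies $A$ is $\alpha$-thin at $p$; for $x\notin A$ with $|x-p|$ small enough that $x\in\omega_k^\delta(p)$ with $k\ge K_m$, one has $|x-p|^{n-\alpha}\,R^{\alpha,\Omega}_{\nu_k'}(x)\le 1/m$ for arbitrarily large $m$, while the $\nu_k''$ contribution is $o(1)$ by the second step. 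Reinstating the point-mass part yields the limit $\mu(\{p\})$. The main difficulty lies in the precise bookkeeping between the capacity normalization supplied by Lemma 4.1 and the normalizing factors built into Definition 4.1, in order to treat the power and logarithmic regimes uniformly with a single diagonal choice.
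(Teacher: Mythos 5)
Your proposal is correct and follows essentially the same route as the proof in the source the paper cites for this theorem (\cite{MQ22a}, in the tradition of Arsove--Huber and Mizuta): isolate the atom at $p$, dispose of the off-diagonal part of the remainder $\nu=\mu-\mu(\{p\})\delta_p$ by elementary dyadic estimates using $|x-y|\ge\tfrac12\max(|x-p|,|y-p|)$ and $\sum_j\nu(\omega_j^\delta(p))<\infty$, control the diagonal part by testing the rescaled restricted measure against the Riesz capacity of a superlevel set, and finish with a diagonal selection over thresholds $1/m$. The only point needing the extra care you already flag is $\alpha=n$, where the logarithmic kernel's dependence on the diameter of $\Omega_k^\delta(p)$ versus that of $\Omega$ must be absorbed using $\nu(\Omega_k^\delta(p))\to0$; with that, the bookkeeping closes.
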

 
 To obtain the Hausdorff dimension estimates we use the help from the following
 
 \begin{lemma} \label{Lem:Kpata} (\cite{Kp})
Let $\mu$ be a nonnegative Radon measure on a complete Riemannian manifold $(M^n, g)$ and let
$$
G_d^\infty = \{x\in M^n: \limsup_{r\to 0} r^{-d} \mu(B_r(x)) = +\infty\}
$$
for any $d\in [0, n]$.
Then 
$$
\mathcal {H}_d (G^\infty_d) = 0
$$
where $\mathcal{H}_d$ is the Hausdorff measure of dimension $d$.
\end{lemma}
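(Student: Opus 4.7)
The plan is to prove this by a Vitali-type covering argument, exploiting the fact that a Radon measure on $M^n$ is locally finite. The idea is to cover $G_d^\infty$ by small balls on which $\mu$ is much larger than $r^d$, use the $5r$-covering lemma to extract a disjoint subfamily, and then bound the Hausdorff pre-measure by a constant multiple of $\mu$ of an ambient precompact set, divided by an arbitrarily large constant.

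First I would localize. Choose an exhaustion of $M^n$ by open sets $U_k$ whose closures are compact and contained in larger precompact open sets $U'_k$. By countable subadditivity of $\mathcal{H}_d$, it suffices to prove $\mathcal{H}_d(G_d^\infty \cap U_k)=0$ for each $k$. Because $\mu$ is Radon, we have $\mu(U'_k)<\infty$, and I can choose $\epsilon_k>0$ small enough that every geodesic ball of radius at most $2\epsilon_k$ centered in $U_k$ lies inside $U'_k$.

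Next, fix an arbitrary $M>0$ and any $\epsilon\in(0,\epsilon_k]$. Since $\limsup_{r\to 0}r^{-d}\mu(B_r(x))=+\infty$ for $x\in G_d^\infty$, for each such $x\in U_k$ there exists a radius $r_x\in(0,\epsilon)$ with $\mu(B_{r_x}(x))>M r_x^d$. The $5r$-covering lemma, which holds in any metric space without a doubling hypothesis, then produces a pairwise disjoint subfamily $\{B_{r_i}(x_i)\}_{i\in I}$ such that $\{B_{5r_i}(x_i)\}_{i\in I}$ still covers $G_d^\infty\cap U_k$. Since each $B_{5r_i}(x_i)$ has diameter at most $10 r_i\le 10\epsilon$, the Hausdorff pre-measure satisfies
$$
\mathcal{H}_d^{10\epsilon}(G_d^\infty\cap U_k)\le C_d\sum_{i\in I}(5r_i)^d\le \frac{C_d\, 5^d}{M}\sum_{i\in I}\mu(B_{r_i}(x_i))\le \frac{C_d\, 5^d}{M}\,\mu(U'_k),
$$
where $C_d$ is the dimensional normalization constant and disjointness of the $B_{r_i}(x_i)$, together with their inclusion in $U'_k$, justifies the final inequality. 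Letting $M\to\infty$ gives $\mathcal{H}_d^{10\epsilon}(G_d^\infty\cap U_k)=0$; letting $\epsilon\to 0$ then yields $\mathcal{H}_d(G_d^\infty\cap U_k)=0$, and summing over $k$ completes the proof.

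The only point that requires any care is the use of the covering lemma on a Riemannian manifold rather than on Euclidean space, but the $5r$-covering argument is purely metric and needs no volume-doubling, so this passes through unchanged. Notice also that neither curvature bounds nor volume comparison enter anywhere, because the estimate bounds the Hausdorff pre-measure through the metric radii $r_i$ directly; the manifold structure enters only to ensure that $\mu$ is locally finite and that geodesic balls of small radius behave as genuine metric balls.
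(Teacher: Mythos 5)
Your argument is correct and complete; the paper itself gives no proof of this lemma, only the citation to Kpata, and what you have written is the standard upper-density estimate for Radon measures proved by exactly this Vitali $5r$-covering scheme (as in Mattila or Federer), so there is no real divergence of method to report. Two small points that you handle implicitly but correctly: the disjoint subfamily is automatically countable because each selected ball has $\mu$-measure exceeding $Mr_i^d>0$ while all of them sit inside $U_k'$ with $\mu(U_k')<\infty$, so the sum $\sum_{i\in I}\mu(B_{r_i}(x_i))$ is a genuine countable sum; and the case $d=0$ is degenerate (the set $G_0^\infty$ is empty for a locally finite measure), but your inequality covers it anyway.
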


Consequently we have

 \begin{theorem} \label{Thm:l-improve} (\cite{MQ22a}
Suppose that $\mu$ is a finite nonnegative Radon measure on a bounded 
domain $\Omega\subset\mathbb{R}^n$. Let $S$ be a compact subset in $\Omega$ such that its Hausdorff dimension is greater than $d$, where
 $d < n-\alpha$ and $\alpha \in (1, n)$.  Then there is a point $p\in S$ and a subset $E$ that is $\alpha$-thin at $p$ such that 
$$
R_\mu^{\alpha, \Omega} (x) \leq \frac C{|x - p|^{n-\alpha - d}}
$$
for some constant $C$ and all $x\in B_\delta (p) \setminus E$ for some small $\delta > 0$.
\end{theorem}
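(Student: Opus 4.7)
The plan is to select a point $p \in S$ with a sharp measure-growth bound, and then to build the exceptional set $E$ scale by scale via a capacity-threshold argument.

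Since $S$ has Hausdorff dimension greater than $d$, one can first choose $d' \in (d, n-\alpha)$ with $\mathcal{H}_{d'}(S) > 0$. Applying the Kpata lemma at this $d'$ shows that $G^\infty_{d'} = \{x : \limsup_{r \to 0} r^{-d'}\mu(B_r(x)) = \infty\}$ is $\mathcal{H}_{d'}$-null, so some $p \in S \setminus G^\infty_{d'}$ admits constants $M, r_0 > 0$ with $\mu(B_r(p)) \leq M r^{d'}$ for every $r \leq r_0$. This single pointwise growth estimate at $p$ drives the rest of the argument.

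For $x \in \omega^\delta_j(p)$ write $\rho = |x - p|$, so $\rho \asymp 2^{-j}\delta$, and decompose $\mu = \nu_j + \lambda_j$ with $\nu_j := \mu|_{\Omega^\delta_j(p)}$. For any $y$ in the support of $\lambda_j$ one has $|x - y| \geq \tfrac{1}{2}\max\{|x - p|, |y - p|\}$, and a standard dyadic decomposition of $\{|y-p| \geq 2^{-j+2}\delta\}$ combined with the growth bound on $\mu$ yields the far-field estimate
$$\int_\Omega |x - y|^{\alpha - n}\, d\lambda_j(y) \leq C_1\, \rho^{\alpha - n + d'}$$
uniformly for $x \in \omega^\delta_j(p)$, where $C_1$ depends only on $M$, $\mu(\Omega)$, $n$, $\alpha$, and $\delta$. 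For the near field, set
$$E_j := \bigl\{x \in \omega^\delta_j(p) : R^{\alpha, \Omega}_{\nu_j}(x) > j^2 (2^{-j}\delta)^{\alpha - n + d'}\bigr\}, \qquad E := \bigcup_{j} E_j.$$
Since $\nu_j(\Omega) \leq M(2^{-j+2}\delta)^{d'}$, dividing $\nu_j$ by the threshold and using it as a test measure in the definition of Riesz capacity yields
$$C^\alpha_{\mathcal{L}}(E_j, \Omega^\delta_j(p)) \leq \frac{\nu_j(\Omega)}{j^2 (2^{-j}\delta)^{\alpha - n + d'}} \leq \frac{4^{d'} M}{j^2}\,(2^{-j}\delta)^{n - \alpha}.$$
By the scaling identity for Riesz capacity, the denominator in the thinness series equals $c(n,\alpha)(2^{-j}\delta)^{n - \alpha}$, so that series is dominated by $\sum_j j^{-2} < \infty$ and $E$ is $\alpha$-thin at $p$. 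On $\omega^\delta_j(p) \setminus E_j$ the two bounds combine to give $R^{\alpha, \Omega}_\mu(x) \leq C_2\, j^2 \rho^{\alpha - n + d'}$; since $j \sim \log_2(\delta/\rho)$, for $\rho$ small one has $j^2 \leq \rho^{-(d' - d)}$, which upgrades the bound to $R^{\alpha,\Omega}_\mu(x) \leq C\, \rho^{\alpha - n + d} = C|x - p|^{-(n - \alpha - d)}$ after $\delta$ is shrunk.

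The main obstacle is that using the natural threshold $\rho^{\alpha - n + d'}$ in the definition of $E_j$ makes each capacity ratio comparable to a scale-invariant constant, which would prevent $E$ from being thin. The remedy is to over-threshold by any summable factor (here $j^2$), tolerate the resulting logarithmic overshoot, and recover the stated exponent by working with $d' > d$ strictly. The real technical bookkeeping is to verify that $C_1$ in the far-field estimate is genuinely independent of $j$ and that the capacity tests are performed inside the correct reference domains $\Omega^\delta_j(p)$, so that the thinness series really sums.
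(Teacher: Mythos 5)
Your proposal is correct and follows essentially the same route the paper indicates: Lemma \ref{Lem:Kpata} applied at an exponent $d'\in(d,\min\{\dim_{\mathcal H}S,\,n-\alpha\})$ to locate a point $p$ with $\mu(B_r(p))\le Mr^{d'}$, followed by the dyadic near/far splitting with a capacity-threshold definition of $E_j$ tested against $\nu_j$ divided by the threshold, and the scaling identity for $C^\alpha_{\mathcal L}$ to sum the thinness series. The summable over-threshold $j^2$ absorbed by the slack $d'-d>0$ is exactly the standard device here, so no further comment is needed.
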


%%%%%%%%%%%%%%%%

\subsection{Consequences in conformal geometry}\label{Subsec:l-geometric} 
We first derive the Hausdorff dimension estimates from the scalar curvature equation \eqref{Equ:scalar-curvature-eq}.

\begin{theorem}\label{Thm:scalar>=3} (\cite{MQ22a})
Let $S$ be a compact subset and $D$ be a bounded open neighborhood of $S$.  
Suppose that $\bar g = u^\frac 4{n-2} g$ is a conformal metric on $D\setminus S$ and is geodesically complete near $S$. 
Then the Hausdorff dimension
$$
\text{dim}_\mathcal{H} (S) \leq \frac {n-2}2,
$$ 
provided that
$R^-[\bar g] \in L^\frac {2n}{n+2} (D\setminus S, \bar g) \bigcap L^p(D\setminus S, \bar g)$
for some $p > n/2$.
\end{theorem}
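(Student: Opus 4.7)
The plan is to argue by contradiction. Suppose $\dim_\mathcal{H}(S) > (n-2)/2$ and fix $d$ with $(n-2)/2 < d < \min\{n-2,\dim_\mathcal{H}(S)\}$. Working locally near a point of $S$, I would replace $D$ by a small Euclidean ball (the lower-order contributions from $g$ do not affect the scaling behavior), so that the scalar curvature equation \eqref{Equ:scalar-curvature-eq} reduces to
$$-\Delta u = \tfrac{n-2}{4(n-1)} R_{\bar g}\, u^{(n+2)/(n-2)} = f^+ - f^-,$$
where $f^{\pm} \geq 0$ come from $R_{\bar g}^{\pm}$. Note that $dvol_{\bar g} = u^{2n/(n-2)} dx$, so $f^{\pm}\,dx = C R_{\bar g}^{\pm} u^{-1}\, dvol_{\bar g}$.

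The first step is to extend $u$ across $S$ and represent it, up to a bounded smooth correction $h$, as a Riesz potential $R^{2,D}_\mu$ of a finite nonnegative Radon measure $\mu$ on $D$ (with a possible singular part on $S$). The assumption $R^- \in L^p(\bar g)$ with $p > n/2$, applied through a Brezis-Kato-Moser iteration on $-\Delta u + f^- = f^+$, should yield local $L^\infty$-control of $u$ away from the singular set, absorbing the $f^-$ term; while the sharp assumption $R^- \in L^{2n/(n+2)}(\bar g)$, combined with H\"older and the equation, ensures that the resulting nonnegative source has finite total mass. Once the comparison $u \leq h + R^{2,D}_\mu$ is in place, the problem reduces to a linear potential estimate.

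Next, invoke Theorem \ref{Thm:l-improve} with $\alpha = 2$. Since $\dim_\mathcal{H}(S) > d$ and $d < n-2$, there exist $p \in S$ and a set $E$ that is $2$-thin at $p$ such that
$$u(x) \leq \frac{C}{|x-p|^{n-2-d}} \quad \text{for all } x \in B_\delta(p) \setminus E.$$
By Lemma \ref{Lem:l-thin-consequence}, one can pick a Euclidean ray $\gamma$ from $p$ that stays inside $B_\delta(p) \setminus E$. Parameterizing $\gamma$ by Euclidean arc-length $s \in (0,\delta]$,
$$L_{\bar g}(\gamma) = \int_0^\delta u^{2/(n-2)}\, ds \leq \int_0^\delta \frac{C\, ds}{s^{2(n-2-d)/(n-2)}},$$
and the exponent $2(n-2-d)/(n-2)$ is strictly less than $1$ precisely because $d > (n-2)/2$. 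Hence $\gamma$ reaches $p$ in finite $\bar g$-length, contradicting the geodesic completeness of $\bar g$ near $S$. Therefore $\dim_\mathcal{H}(S) \leq (n-2)/2$.

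The main obstacle I expect is the first step: upgrading $u$ to a superharmonic-type function on all of $D$ that is cleanly dominated by a Riesz potential of a finite nonnegative Radon measure, using both integrability hypotheses on $R^-$ in a quantitatively sharp way. The self-referential nonlinearity in $f^{\pm} = C R_{\bar g}^{\pm} u^{(n+2)/(n-2)}$ is delicate, and separating the two roles of the hypotheses (the $L^p$ hypothesis for $L^\infty$-bootstrap, and the $L^{2n/(n+2)}$ hypothesis for finite total mass) is the technical crux. Once that comparison is secured, the remaining dimension count via Theorem \ref{Thm:l-improve}, Lemma \ref{Lem:l-thin-consequence}, and the length computation above is essentially a scaling exercise.
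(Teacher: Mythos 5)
Your overall strategy is the one the paper intends: dominate $u$ by a Riesz potential $R^{2,D}_\mu$ of a finite nonnegative Radon measure, apply Theorem \ref{Thm:l-improve} with $\alpha=2$ to an exponent $d\in(\tfrac{n-2}{2},\,n-2)$ chosen below a hypothetically large $\dim_{\mathcal H}(S)$, escape the $2$-thin exceptional set along a ray via Lemma \ref{Lem:l-thin-consequence}, and contradict completeness through the length integral $\int_0^\delta s^{-2(n-2-d)/(n-2)}\,ds<\infty$, which converges precisely when $d>\tfrac{n-2}{2}$. That second half, including the exponent arithmetic and the H\"older computation $\int f^-\,dx\le C\|R^-\|_{L^{2n/(n+2)}(\bar g)}\,|D\setminus S|^{(n-2)/(2n)}<\infty$ that explains the exponent $\tfrac{2n}{n+2}$, is correct and is exactly how the machinery of Section \ref{Sect:Riesz} is meant to be deployed.

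The genuine gap is your first step, and it is not merely a technicality left to the reader: it is the main content of the theorem in \cite{MQ22a}. To write $u\le h+R^{2,D}_\mu$ you need $-\Delta u$ to be a signed measure \emph{on all of $D$}, i.e., across $S$, with negative part in $L^1(D)$. But $u$ is only given on $D\setminus S$, and passing the distributional inequality $-\Delta u\ge -f^-$ through $S$ requires a cutoff argument whose error terms are controlled only if one already knows that $S$ is negligible (zero Newtonian capacity, or $u\in L^{(n+2)/(n-2)}_{\mathrm{loc}}(D)$ with suitable gradient integrability near $S$) --- which is dangerously close to what is being proved. Establishing this preliminary removability from completeness and the curvature hypotheses is where most of the work lies, and your proposal assumes it. Moreover, the mechanism you assign to the hypothesis $R^-\in L^p(\bar g)$ with $p>n/2$ (``Brezis--Kato iteration yielding local $L^\infty$-control of $u$ away from the singular set'') cannot be right as stated: $u$ is smooth away from $S$, so interior boundedness there is vacuous, and near $S$ the function $u$ must blow up for $\bar g$ to be complete, so no $L^\infty$ bound holds where it would matter. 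In \cite{MQ22a} this supercritical hypothesis (note that $p=n/2$ is exactly critical for the potential $V=cR^-_{\bar g}u^{4/(n-2)}$ of the linear operator $-\Delta+V$, since $\|V\|_{L^{n/2}(dx)}=c\|R^-\|_{L^{n/2}(\bar g)}$) enters in the extension/lower-bound step --- ensuring the Schr\"odinger operator is well behaved so that $u$ stays bounded away from zero and the superharmonic extension across $S$ goes through --- not in an interior bound for $u$ itself. Until that comparison $u\le h+R^{2,D}_\mu$ is actually secured, the argument is a reduction, not a proof.
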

Obviously the integrability condition holds if $R\geq 0$. Therefore this improves some early result of Schoen-Yau 1988 (\cite{SY88}).
Using \eqref{Equ:paneitz-eq-5} we have

\begin{theorem} \label{Thm:Q>=5}\cite{MQ22a}
Let $S$ be a compact subset and $D$ be a bounded open neighborhood of $S$. Suppose that $\bar g = u^\frac 4{n-4} g$ is a 
conformal metric on $D\setminus S$ with nonnegative scalar curvature and is geodesically complete 
near $S$. And suppose also that
$$
(Q_4^-)_{\bar g} \in L^\frac {2n}{n+4}(D\setminus S, \bar g).
$$
Then 
$$
\text{dim}_{\mathcal{H}} (S) \leq \frac {n-4}2.
$$
\end{theorem}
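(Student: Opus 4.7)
The plan is to combine the linear potential-theoretic estimates of Theorems~\ref{Thm:l-asym} and \ref{Thm:l-improve} with an iteration that exploits both the nonnegative scalar curvature and the $L^{2n/(n+4)}$-integrability of $(Q_4^-)_{\bar g}$, so as to control the growth of $u$ near singular points of $S$ by a Riesz potential of order $4$ of a \emph{finite} measure.

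Working in a local chart in which $g$ is (essentially) Euclidean and treating the curvature terms of $g$ as perturbative, I would first pass to the Yamabe representative $v := u^{(n-2)/(n-4)}$, so that $\bar g = v^{4/(n-2)}g$. The scalar curvature equation \eqref{Equ:scalar-curvature-eq} together with $R_{\bar g}\geq 0$ forces $-\Delta v\geq 0$, and a direct computation from $\nabla v = \tfrac{n-2}{n-4}u^{2/(n-4)}\nabla u$ then gives $\Delta u \leq -\tfrac{2}{n-4}|\nabla u|^2/u \leq 0$, so that $u$ itself is superharmonic. Set $\mu := -\Delta u \geq 0$.

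Next I would feed this into the flat-chart version of \eqref{Equ:paneitz-eq-5}, namely $\Delta^2 u = \tfrac{n-4}{2}(Q_4)_{\bar g}u^{(n+4)/(n-4)}$, and split $(Q_4)_{\bar g} = (Q_4^+)_{\bar g}-(Q_4^-)_{\bar g}$ to write $-\Delta\mu = \nu^+-\nu^-$, where $\nu^+ := \tfrac{n-4}{2}(Q_4^-)_{\bar g}u^{(n+4)/(n-4)}$ and $\nu^-\geq 0$. Because $d\mathrm{vol}_{\bar g} = u^{2n/(n-4)}\,dx$, H\"older's inequality with the conjugate exponents $\tfrac{2n}{n+4}$ and $\tfrac{2n}{n-4}$, combined with the hypothesis on $(Q_4^-)_{\bar g}$, shows that $\nu^+$ is a finite nonnegative Radon measure on $D$. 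Discarding $\nu^-\geq 0$ and using $-\Delta\mu\leq \nu^+$ distributionally, a comparison argument on relatively compact subdomains yields $\mu \leq R^{2,\Omega}_{\nu^+}+C$; iterating once more through $-\Delta u = \mu$ and invoking the Riesz composition identity $|x|^{-(n-2)}\ast|x|^{-(n-2)}\sim|x|^{-(n-4)}$ would then produce the key bound
\[
u(x) \leq C\,R^{4,\Omega}_{\nu^+}(x)+C'
\]
near singular points of $S$.

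For the contradiction, I would assume $\dim_{\mathcal H}(S)>(n-4)/2$ and pick $d\in((n-4)/2,\,n-4)$. Theorem~\ref{Thm:l-improve} applied to the finite measure $\nu^+$ with $\alpha=4$ produces a point $p\in S$ and a $4$-thin set $E$ at $p$ such that $R^{4,\Omega}_{\nu^+}(x)\leq C|x-p|^{-(n-4-d)}$ on $B_\delta(p)\setminus E$, whence $u(x)\leq C|x-p|^{-(n-4-d)}$ on the same set. Lemma~\ref{Lem:l-thin-consequence} then supplies a ray from $p$ into $B_\delta(p)$ avoiding $E$, and the $\bar g$-length of the segment of this ray out to radius $\delta$ is bounded by
\[
\int_0^\delta u^{2/(n-4)}\,dr \leq C\int_0^\delta r^{-2(n-4-d)/(n-4)}\,dr,
\]
which is finite because $d>(n-4)/2$ forces the exponent to be strictly less than $1$; this contradicts the geodesic completeness of $\bar g$ near $S$. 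The principal obstacle I anticipate is making the iterated Riesz representation of the preceding paragraph rigorous in the presence of the uncontrolled negative term $\nu^-$ and of the harmonic corrections produced by representation formulas on bounded subdomains: one must choose nested exhaustions of $D\setminus S$ and verify that these corrections remain bounded near $p$, so that the blow-up of $u$ is genuinely driven by the finite measure $\nu^+$.
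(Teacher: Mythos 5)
Your overall architecture is the intended one: superharmonicity of $u$ from $R_{\bar g}\ge 0$ via the substitution $v=u^{(n-2)/(n-4)}$, an iterated Riesz representation to reach a potential of order $4$, then Lemma~\ref{Lem:Kpata}, Theorem~\ref{Thm:l-improve}, Lemma~\ref{Lem:l-thin-consequence}, and the ray-length computation (where $d>\tfrac{n-4}{2}$ indeed makes $\int_0^\delta r^{-2(n-4-d)/(n-4)}\,dr$ converge). The H\"older step showing that $\nu^+:=\tfrac{n-4}{2}(Q_4^-)_{\bar g}u^{(n+4)/(n-4)}\,dx$ is a finite measure is also correct. But the central inequality $-\Delta\mu\le\nu^+$ is false. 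Since $-\Delta\mu=\Delta^2u=\tfrac{n-4}{2}\bigl[(Q_4^+)_{\bar g}-(Q_4^-)_{\bar g}\bigr]u^{(n+4)/(n-4)}$, the hypothesis only yields the \emph{lower} bound $-\Delta\mu\ge-\nu^+$; the upper bound for $-\Delta\mu$ involves $(Q_4^+)_{\bar g}$, which is not assumed integrable. Your decomposition $-\Delta\mu=\nu^+-\nu^-$ with $\nu^-\ge0$ cannot hold unless $Q_4^+\equiv 0$ (the supports of $Q_4^+$ and $Q_4^-$ are disjoint), and in the extreme case $Q_4\ge0$ your bound would force $-\Delta u$, hence $u$, to be bounded near $S$, contradicting completeness for any nonempty $S$. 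So the measure whose order-$4$ Riesz potential controls $u$ cannot be $\nu^+$.

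The correct route (Huber's original scheme, followed in \cite{MQ22a}) uses the lower bound: let $h$ be the Newtonian potential of the finite measure $\nu^+$, so that $w:=-\Delta u+h\ge0$ satisfies $-\Delta w=\Delta^2u+\nu^+=\tfrac{n-4}{2}(Q_4^+)_{\bar g}u^{(n+4)/(n-4)}\ge0$, i.e.\ $w$ is a nonnegative superharmonic function. Its Riesz measure $\lambda$ --- essentially the \emph{positive} $Q$-curvature measure --- is then locally finite by general potential theory, not by hypothesis; the Riesz decomposition of $w$, the inequality $\mu\le w$, and one composition $|x|^{2-n}\ast|x|^{2-n}\lesssim|x|^{4-n}$ give $u\lesssim R^{4,\Omega}_{\lambda|_K}+C$ near $S$, and it is to $\lambda|_K$ (not to $\nu^+$) that Lemma~\ref{Lem:Kpata} and Theorem~\ref{Thm:l-improve} must be applied. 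Two further points you should address: (i) all of this requires extending $u$, and then $w$, as superharmonic functions \emph{across} $S$ so that their Riesz measures are finite on a compact neighborhood of $S$ --- a removability step (usually a bootstrap showing first that $S$ is polar) that your proposal does not mention; (ii) the ray produced by Lemma~\ref{Lem:l-thin-consequence} must also avoid $S$ itself, which again uses a preliminary dimension bound to ensure $S$ is $4$-thin at $p$.
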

We also were able to show a Huber's type theorem in dimension 4.

\begin{theorem}\label{Thm:huber-4} \cite{MQ22g})
Let $S$ be a compact subset and $D$ be a bounded open neighborhood of $S$. Suppose that $\bar g= e^{2u} g$ 
is a conformal metric on $D\setminus S$ with nonnegative scalar curvature and is geodesically complete near $S$. 
And suppose that
$$
\int_D (Q_4^-dvol)_{\bar g} < \infty.
$$
Then $S$ consists of at most finitely many points.
\end{theorem}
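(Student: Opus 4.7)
The plan is to combine two applications of the linear potential-theoretic estimate of Theorem~\ref{Thm:l-asym} with the completeness hypothesis, using the scalar curvature equation to control $e^{u}$ pointwise and the $Q$-curvature equation to control $u$ itself. Fix an arbitrary $p\in S$ and work in a local coordinate chart about $p$.

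Setting $v = e^{u}$, the conformal scalar curvature equation in dimension $4$ reads
$$P_{2}v \;=\; -\Delta v + \tfrac{R}{6}\,v \;=\; \tfrac{1}{6}\,R_{\bar g}\,v^{3} \;\geq\; 0,$$
so the nonnegativity hypothesis on $R_{\bar g}$ makes $v$ a positive $P_{2}$-superharmonic function on $D\setminus S$. Writing $v$ locally as the conformal Laplacian potential of a nonnegative Radon measure $\nu = P_{2}v$ and applying Theorem~\ref{Thm:l-asym} with $\alpha=2$, I obtain, outside a $2$-thin set at $p$, the pointwise bound $v(x) \lesssim |x-p|^{-2}$, equivalently $u(x) \leq -2\log|x-p| + O(1)$. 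This is the key pointwise control that tames the exponential nonlinearity $e^{4u}$ appearing on the right-hand side of the $Q$-equation.

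Next, rewrite the $Q$-curvature equation \eqref{Equ:paneitz-eq-4} as $P_{4}u = (Q_{4})_{\bar g}\,e^{4u} - Q_{4}$. The hypothesis $\int_{D} (Q_{4}^{-})_{\bar g}\,dvol_{\bar g}<\infty$ translates to $(Q_{4})_{\bar g}^{-}e^{4u} \in L^{1}(D,dvol_{g})$, and together with boundedness of $Q_{4}$ on $\overline{D}$ gives $(P_{4}u)^{-} \in L^{1}$. The reverse direction, $(P_{4}u)^{+}$ being a finite measure, is obtained by feeding the pointwise bound $e^{4u}=v^{4}\lesssim |x-p|^{-8}$ from the previous step into the $Q$-equation and exploiting conformal invariance features of $Q_{4}$ in dimension $4$ (analogous to the role of Gauss--Bonnet in the classical two-dimensional Huber theorem). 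One thereby represents $P_{4}u$ distributionally as a signed Radon measure $\mu$ of finite total variation near $p$, and Theorem~\ref{Thm:l-asym} applies in the borderline logarithmic case $\alpha = n = 4$, yielding
$$u(x) \;=\; c_{4}\,\mu(\{p\})\,\log\frac{1}{|x-p|} \;+\; o\!\left(\log\tfrac{1}{|x-p|}\right)\quad\text{as } x\to p,$$
outside a $4$-thin exceptional set, with $c_{4}>0$ a dimensional constant.

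Completeness of $\bar g = e^{2u}g$ near $S$ now enters. By Lemma~\ref{Lem:l-thin-consequence}, applied after absorbing the $2$-thin exceptional set into the $4$-thin one, there is a ray from a nearby point terminating at $p$ that avoids the exceptional set. Along this ray $e^{u(r)} \sim r^{-c_{4}\mu(\{p\})}$, so $\int_{0}^{\delta}e^{u(r)}\,dr$ can diverge, as completeness requires, only if $c_{4}\mu(\{p\}) \geq 1$. Hence $\mu(\{p\}) \geq c_{4}^{-1}$ at every $p\in S$, and the finiteness of the total variation $\|\mu\|$ forces $S$ to contain at most $c_{4}\,\|\mu\|$ points.

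\textbf{Main obstacle.} The core technical difficulty is the step producing $(P_{4}u)^{+}$ as a finite Radon measure. Control of the negative part follows directly from the hypothesis on $(Q_{4})_{\bar g}^{-}$, but the positive part requires a genuine bootstrap: the scalar curvature assumption, through $P_{2}$-superharmonicity of $v=e^{u}$, delivers the pointwise $|x-p|^{-2}$ bound, which must then be combined with the $Q$-equation and the dimension-four integral identities for $Q_{4}$ to convert a pointwise upper bound into the integral control required by the linear potential theory. Ensuring that the $2$-thin and $4$-thin exceptional sets arising at the two stages can be jointly avoided by a radial approach to $p$ is the second delicate point.
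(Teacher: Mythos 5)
Your global architecture is the right one and matches \cite{MQ22g}: represent $u$ near each $p\in S$ as a logarithmic ($\alpha=n=4$) potential of a finite signed measure $\mu$, apply Theorem \ref{Thm:l-asym} outside a thin set, and use Lemma \ref{Lem:l-thin-consequence} plus completeness along a ray to force $\mu(\{p\})$ above a fixed positive threshold, so that finiteness of the total mass bounds the number of points. The genuine gap is exactly where you flag it, but the mechanism you propose for closing it cannot work. Feeding a pointwise bound $e^{4u}\lesssim|x-p|^{-8}$ into the $Q$-equation gives no control on $(Q_4)^+_{\bar g}e^{4u}$, because the hypotheses place no bound of any kind on $(Q_4)^+_{\bar g}$; no appeal to ``conformal invariance features'' can manufacture an $L^1$ bound on the positive part of the right-hand side. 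The correct route never bounds $(Q_4)^+_{\bar g}e^{4u}$ from the equation. In local flat coordinates the condition $R_{\bar g}\geq 0$ is equivalent to $-\Delta u\geq|\nabla u|^2\geq 0$, so $h:=-\Delta u$ is a nonnegative function satisfying $-\Delta h=\Delta^2u=(Q_4)_{\bar g}e^{4u}\geq -f$ with $f:=(Q_4)^-_{\bar g}e^{4u}\in L^1(D,dx)$ by hypothesis. Adding the Newtonian potential of $f$ turns $h$ into a nonnegative superharmonic function on $D\setminus S$, and it is the Riesz decomposition for nonnegative superharmonic functions (after extending across $S$, which requires first showing $S$ is polar — another step you omit) that produces the finite nonnegative measure: finiteness of the positive part comes for free from superharmonicity plus nonnegativity, not from any integral identity for $Q_4$. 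One then integrates once more to express $u$ as a log-potential and proceeds as you do.

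Two further points need attention. First, the ray from Lemma \ref{Lem:l-thin-consequence} must avoid not only the exceptional set but $S$ itself (otherwise its $\bar g$-length is undefined), and your step ``absorbing the $2$-thin exceptional set into the $4$-thin one'' is unjustified as stated: thinness with respect to $C^{2}_{\mathcal L}$ and $C^{4}_{\mathcal L}$ are different conditions, and the avoidability of the union of the two sets by a single ray requires comparing the capacities rather than a formal union of thin sets for a fixed $\alpha$. Second, your opening application of Theorem \ref{Thm:l-asym} to $v=e^{u}$ already presupposes that $P_2v$ extends to a finite measure across $S$ and that $v$ is dominated by its Riesz potential up to a bounded term — this is the same extension problem in disguise, and in the actual argument this first potential-theoretic step is not needed: the only input from $R_{\bar g}\geq 0$ is the sign (and the gradient lower bound) of $-\Delta u$.
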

This can be compared with some early result of Chang-Q-Yang 2000 (\cite{CQY}).

%******************************************************

\section{Nonlinear potentials and applications}\label{Sect:Wolff}

In this section we give an exposition of our recent work on nonlinear potential theory in conformal geometry. We apply nonlinear potential theory to study 
$p$-Laplace equations arising from conformal geometry and, in particular, the problems related to the asymptotic behavior near 
and the size of singularities in conformal geometry.

%%%%%%%%%%%%%%%%

\subsection{Wolff potentials, $p$-superharmonic functions, and their singular behavior}\label{Subsec:nonlinear-potential}

Let us first give the definitions of $p$-harmonic and  $p$-superharmonic functions. In this section we always assume $1<p<n$ unless specified otherwise.
Let us recall again what is the $p$-Laplace operator
\begin{equation*}
\Delta_p u = \text{div}(|\nabla u|^{p-2}\nabla u).
\end{equation*}

\begin{definition}\label{Def:p-harmonic} (\cite[Definition 2.5]{Lind06})
We say that $u\in W^{1,p}_{\rm{loc}}(\Omega)$ is a weak solution of the $p$-harmonic equation in $\Omega$, if 
\begin{equation*}
\int \langle |\nabla u|^{p-2}\nabla u,\nabla \eta \rangle dx=0
\end{equation*}
for each $\eta\in C_0 ^{\infty}(\Omega)$. If, in addition, $u$ is continuous, we then say $u$ is a $p$-harmonic function.
\end{definition}

\begin{definition}\label{Def:p-superhar} (\cite[Definiton 5.1]{Lind06})
A function $v:\Omega\to (-\infty,\infty]$ is called $p$-superharmonic in $\Omega$, if 
\begin{itemize}
\item $v$ is lower semi-continuous in $\Omega$;
\item $v\not\equiv\infty$ in $\Omega$;
\item for each domain $D\subset\subset\Omega$ the comparison principle holds, that is, 
if $h\in C(\bar{D})$ is $p$-harmonic in $D$ and $h|_{\partial D}\le v|_{\partial D}$, then $h\le v$ in D. 
\end{itemize}
\end{definition}

As stated in \cite[Theorem 2.1]{KM92}, for $u$ to be a $p$-superharmonic function in $\Omega$, there is a nonnegative Radon 
measure $\mu$ in $\Omega$ such that
\begin{equation}\label{Equ:p-laplace-measure}
-\Delta_p u=\mu.
\end{equation}

For more about $p$-superharmonic functions and nonlinear potential theory, we refer to  \cite{KM94, HKM93, HK88, AH99, Lind06} and
references therein. The most important tool is the following Wolff potential 

\begin{equation}\label{Equ:wolff-potential}
W^{\mu}_{1,p}(x,\, r)=\int_0^r(\frac {\mu(B(x,\, t))}{t^{n-p}})^{\frac 1 {p-1}}\frac {dt}{t}
\end{equation}
for any nonnegative Radon measure $\mu$ and $p\in (1, n]$. 
The fundamental estimate for the use of the Wolff potential in the study of $p$-superharmonic
functions is as follows:

\begin{theorem}\label{Thm:main-use-wolff} (\cite[Theorem 1.6]{KM94}) 
Suppose that $u$  is a nonnegative $p$-superharmonic function satisfying \eqref{Equ:p-laplace-measure} for a nonnegative 
finite Radon measure $\mu$ in $B(x, 3r)$. Then
\begin{equation}\label{Equ: main-use-wolff}
c_1 W^{\mu}_{1,p}(x, r)\le u(x)\le c_2(\inf_{B(x, r)}u+W^{\mu}_{1,p}(x, 2r))
\end{equation}
for some constants $c_1(n,p)$ and $c_2(n,p)$ for $p\in (1, n]$.
\end{theorem}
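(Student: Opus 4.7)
The plan is to establish the two inequalities of Theorem~\ref{Thm:main-use-wolff} by quite different tools: the lower bound follows from barrier comparison on dyadic annuli, whereas the upper bound requires the Kilpel\"ainen--Mal\'y iteration on truncation levels.

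For the lower bound, I would set $r_i=2^{-i}r$ for $i\ge 0$ and for each $i$ let $v_i\in W^{1,p}_0(B(x,r_i))$ be the $p$-potential of the restriction of $\mu$ to $B(x,r_i)$, that is, the unique nonnegative $p$-superharmonic solution of $-\Delta_p v_i=\chi_{B(x,r_i)}\mu$ with zero boundary values on $\partial B(x,r_i)$. The comparison principle built into Definition~\ref{Def:p-superhar} (extended to the measure-data setting) then gives $u\ge v_i$ on $B(x,r_i)$, since $-\Delta_p u$ dominates $-\Delta_p v_i$ as measures while $u\ge 0=v_i$ on the boundary. A barrier argument, testing against the radial fundamental solution $|y-x|^{(p-n)/(p-1)}$ of the $p$-Laplacian suitably truncated, then yields
$$
v_i(x)\ge c(n,p)\Bigl(\frac{\mu(B(x,r_i))}{r_i^{n-p}}\Bigr)^{1/(p-1)}.
$$
Summing over $i\ge 0$ and comparing the dyadic sum with the integral in \eqref{Equ:wolff-potential} produces the left inequality.

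For the upper bound, which is the substantial part, I would run the Kilpel\"ainen--Mal\'y iteration. Fix dyadic radii $r_j=2^{1-j}r$ and write
$$
T_j=\Bigl(\frac{\mu(B(x,r_j))}{r_j^{n-p}}\Bigr)^{1/(p-1)}.
$$
Define an increasing sequence of levels by $\lambda_0=\inf_{B(x,r)}u$ and $\lambda_{j+1}=\lambda_j+K T_j$ for a large constant $K=K(n,p)$ to be chosen. The core of the argument is the inductive claim that in each $B(x,r_j)$ there is a set of positive $p$-capacity on which $u\le\lambda_j$. To propagate this from $j$ to $j+1$, one tests the weak formulation of $-\Delta_p u=\mu$ against a cutoff of the form $\eta^p(\lambda_{j+1}-u)_+^{\theta}$, with $\eta$ supported between $B(x,r_{j+1})$ and $B(x,r_j)$ and $\theta>0$ chosen carefully; this yields a Caccioppoli-type energy estimate whose right-hand side involves $\mu(B(x,r_j))(\lambda_{j+1}-\lambda_j)$ and $r_j^{n-p}(\lambda_{j+1}-\lambda_j)^p$. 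A Sobolev embedding plus a Moser-style iteration in $\theta$ then converts the energy bound into a pointwise statement, and choosing $K$ large makes the measure contribution a strict fraction of the jump $K T_j$, closing the induction. Letting $j\to\infty$ and using that $u$ is lower semicontinuous gives
$$
u(x)\le\lim_{j\to\infty}\lambda_j\le\inf_{B(x,r)}u+K\sum_{j\ge 0}T_j\le c_2\Bigl(\inf_{B(x,r)}u+W^{\mu}_{1,p}(x,2r)\Bigr),
$$
where in the final step the dyadic sum is compared to the integral in \eqref{Equ:wolff-potential}.

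The main obstacle is the upper bound, and specifically the calibration of the Caccioppoli--Moser iteration so that the measure contributions enter \emph{additively} across scales with exactly the exponent $1/(p-1)$ dictated by the Wolff potential, rather than multiplicatively or with an incorrect exponent. The point of the Kilpel\"ainen--Mal\'y scheme is that the level increments $\lambda_{j+1}-\lambda_j$ must be chosen proportional to $T_j$ and the truncation exponent $\theta$ in the test function tuned so that after the energy/Sobolev step the leftover error at scale $r_{j+1}$ is a strictly smaller fraction of the error at scale $r_j$. The linear case $p=2$, in which $W^{\mu}_{1,2}$ reduces (up to a constant) to the Riesz potential of Section~\ref{Subsec:linear-potential}, provides a useful sanity check on the exponents.
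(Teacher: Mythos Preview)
The paper does not prove this statement: Theorem~\ref{Thm:main-use-wolff} is quoted from \cite[Theorem 1.6]{KM94} and used as a black box throughout Section~\ref{Sect:Wolff}, so there is no argument in the present paper to compare your proposal against. Your outline of the upper bound is, in fact, a reasonable summary of the Kilpel\"ainen--Mal\'y iteration in the cited source.

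Your lower-bound argument, however, has a gap worth flagging. From $u\ge v_i$ on $B(x,r_i)$ together with $v_i(x)\ge c\,T_i$, where $T_i=(\mu(B(x,r_i))/r_i^{\,n-p})^{1/(p-1)}$, you can conclude only $u(x)\ge c\max_i T_i$, not $u(x)\ge c\sum_i T_i$; because $\Delta_p$ is nonlinear for $p\neq 2$, the potentials $v_i$ cannot be superposed, so ``summing over $i$'' is not a legitimate step. The standard mechanism is instead a telescoping iteration: set $m_i=\inf_{B(x,r_i)}u$ and apply the one-scale barrier (or weak Harnack) estimate to the nonnegative $p$-superharmonic function $u-m_i$ on $B(x,r_i)$ to obtain $m_{i+1}-m_i\ge c\,T_{i+1}$. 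Summing these increments and using lower semicontinuity gives $u(x)=\lim_i m_i\ge c\sum_i T_i$, which after comparison with the integral in \eqref{Equ:wolff-potential} is the left inequality in \eqref{Equ: main-use-wolff}.
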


To introduce nonlinear potential theory and present the results on asymptotic behavior of the Wolff potentials at singularities we first recall 
some definitions and basics.
\begin{definition}\label{Def:p-capacity} (\cite[Section 3.1]{KM94}) 
For a compact subset $K$ of a domain $\Omega$ in $\mathbb{R}^n$, we define
\begin{equation}\label{Equ:p-capacity-K}
cap_p(K,\Omega)=\inf \{\int_{\Omega}|\nabla u|^p dx: \text{ $u\in C_0^{\infty}(\Omega)$ and $u\ge 1$ on $K$}\}.
\end{equation}
Then $p$-capacity for arbitrary subset $E$ of $\Omega$ is 
\begin{equation}\label{Equ:p-capacity}
cap_p(E,\Omega)=\inf_{
\text{open}\ G \supset E \text{ \& } G  \subset\Omega} \ \ \sup_{\text{compact} \ K \subset G} cap_p(K,\Omega).
\end{equation}
\end{definition}

Analogously, the capacity $cap_p$ shares the same basic properties in Lemma \ref{Lem:l-basic property} as the Riesz capacity does
(cf. \cite{HKM93, AH99, LMQZ1}). The notions of thinness in the potential theory are vitally important. 
The readers are referred to \cite{AM72, AH73, HK88,KM94, MQ21a, MQ22a, MQ22g} for detailed discussions and references therein. 
To study the singular behavior of $p$-superharmonic functions, like \cite[Definition 3.1]{MQ21a} (see also
\cite[Section 2.5]{Mi96}), we 
propose a thinness that is less restrictive than that by the Wiener type integral when $p\in (2, n)$. Recall the dyadic annuli are
\begin{eqnarray*}
\omega_i(x_0)&=&\{x\in\mathbb R^n:2^{-i}\le|x-x_0|\le2^{-i+1}\};\\
\Omega_i(x_0)&=&\{x\in\mathbb R^n:2^{-i-1}\le|x-x_0|\le2^{-i+2}\}.
\end{eqnarray*}

\begin{definition}\label{Def:quasi-p-thin}
A set $E\subset\mathbb R^n$ is said to be $p$-thin for singular behavior for $p\in (1, n)$ at $x_0\in\mathbb R^n$ if 
\begin{equation}\label{Equ:quasi-p-thin}
\sum_{i=1}^\infty \frac{cap_p(E\cap\omega_i(x_0),\Omega_i(x_0))}{cap_p(\partial B(x_0, 2^{-i}), B(x_0, 2^{-i+1}))}  <+\infty.
\end{equation}
Meanwhile a set $E$ is said to be $n$-thin at $x_0\in \mathbb{R}^n$ if
\begin{equation}\label{Equ:quasi-n-thin}
\sum_{i=1}^\infty i^{n-1}\, cap_n(E\cap\omega_i (x_0), \Omega_i(x_0)) < +\infty.
\end{equation} 
\end{definition}

Notice that these thin sets are with respect to the $p$-capacity $cap_p$ in contrast to the ones with respect to the Riesz capacity in 
Definition \ref{Def:l-thin}. Like Lemma \ref{Lem:l-thin-consequence}, the most important fact about $p$-thinness for singular behavior to us due to Lemma \ref{Lem:l-basic property} is the following 

\begin{lemma}\label{Lemsegment-escape} 
Let $E$ be a subset in the Euclidean space $\mathbb{R}^n$ and $x_0\in \mathbb{R}^n$ be a point. Suppose that $E$ is $p$-thin for singular behavior at the 
point $x_0$ for $p\in (1, n]$. Then there is a ray from $x_0$ that avoids $E$ at least within some small ball at $x_0$.
\end{lemma}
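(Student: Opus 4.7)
The strategy mirrors the proof of Lemma \ref{Lem:l-thin-consequence} for the Riesz capacity, now invoking the $p$-capacity analogs of the basic properties in Lemma \ref{Lem:l-basic property}. After translating so that $x_0 = 0$, for each $i \geq 1$ I consider the radial shadow
$$
F_i = \{\theta \in \mathbb{S}^{n-1} : r\theta \in E \text{ for some } r \in [2^{-i}, 2^{-i+1}]\}
$$
of $E \cap \omega_i(0)$ on the unit sphere, and I fix the reference annulus $\Omega_0 = \{y : 1/2 < |y| < 4\}$. The plan is to show that the $p$-thinness hypothesis forces $\sum_i cap_p(F_i, \Omega_0) < \infty$, after which countable subadditivity combined with the positivity of $cap_p(\mathbb{S}^{n-1}, \Omega_0)$ finishes the argument.

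The key estimate I aim for is
$$
cap_p(F_i, \Omega_0) \leq C \cdot \frac{cap_p(E \cap \omega_i(0), \Omega_i(0))}{cap_p(\partial B(0, 2^{-i}), B(0, 2^{-i+1}))}
$$
for $p \in (1, n)$ with $C = C(n, p)$. First I rescale by $2^i$: by the scaling property, $\omega_i(0)$ and $\Omega_i(0)$ become $\{1 \leq |y| \leq 2\}$ and $\Omega_0$, and the $p$-capacity of the rescaled set picks up a factor $2^{i(n-p)}$ that precisely cancels the factor $2^{-i(n-p)}$ in the denominator. Then I compose with the auxiliary map $\tilde\pi : \Omega_0 \to \Omega_0$ defined as the identity on $\{|y| \leq 1\}$ and as $y \mapsto y/|y|$ on $\{|y| \geq 1\}$; a short geometric check shows that $\tilde\pi$ is a contraction and sends the rescaled $E \cap \omega_i(0)$ onto $F_i$, so the contraction property of $cap_p$ completes the bound. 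For $p = n$, the $n$-capacity is scale-invariant, and since $i^{n-1} \geq 1$ the chain $\sum_i cap_n(F_i, \Omega_0) \leq C\sum_i cap_n(E \cap \omega_i, \Omega_i) \leq C\sum_i i^{n-1} cap_n(E \cap \omega_i, \Omega_i) < \infty$ follows a fortiori from the $n$-thinness hypothesis.

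With the estimate in hand, countable subadditivity of $cap_p$ provides, for each $\varepsilon > 0$, an index $k$ with $cap_p(\bigcup_{i \geq k} F_i, \Omega_0) < \varepsilon$. Since $\mathbb{S}^{n-1} \subset \Omega_0$ has positive $p$-capacity (its Hausdorff dimension $n - 1$ exceeds $n - p$ for $p > 1$, while the case $p = n$ is classical and can be computed explicitly by a radial test function), choosing $\varepsilon$ strictly below $cap_p(\mathbb{S}^{n-1}, \Omega_0)$ forces $\bigcup_{i \geq k} F_i \neq \mathbb{S}^{n-1}$. Any direction $\theta_0$ in the complement then yields a ray $\{r\theta_0 : 0 < r \leq 2^{-k+1}\}$ that is disjoint from $E$, which is the desired conclusion.

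The main obstacle will be verifying that the piecewise map $\tilde\pi$ really is a contraction on all of $\Omega_0$. The pure radial projection $y \mapsto y/|y|$ fails to be a contraction when one argument lies strictly inside the unit ball (sending, e.g., $(1/2, 0)$ and $(0, 1/2)$ apart by $\sqrt{2}$ while they are only $\sqrt{1/2}$ apart), so one must confirm that gluing with the identity across the interface $\{|y| = 1\}$ preserves the Lipschitz bound; this reduces to the elementary inequality $(|y_2| - 1)\bigl(|y_2| + 1 - 2 y_1 \cdot y_2 / |y_2|\bigr) \geq 0$ for $|y_1| \leq 1 \leq |y_2|$. Once this is in place, the rest is a direct translation of the Riesz-capacity argument of Lemma \ref{Lem:l-thin-consequence} into the nonlinear $p$-capacity setting.
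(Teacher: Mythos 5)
Your proof is correct and follows exactly the route the paper intends for this lemma: rescale each dyadic annulus, project radially onto the unit sphere using the scaling and contraction properties of $cap_p$ (the analogues of Lemma \ref{Lem:l-basic property}), and conclude via countable subadditivity against the positive capacity of $\mathbb{S}^{n-1}$, with the $p=n$ case following a fortiori since $i^{n-1}\geq 1$. The one point worth adding is that the contraction property of $\tilde\pi$ must also be checked when both points lie outside the unit ball, not only across the interface; this is immediate once one observes that $\tilde\pi$ is just the nearest-point projection onto the convex set $\overline{B}_1$, which is $1$-Lipschitz.
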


The key estimates on the Wolff potential are the following

\begin{theorem} (\cite{LMQZ1, LMQZ2}) \label{Thm:wolff potential upper bdd}
Suppose $\mu$ is a nonnegative finite Radon measure in $\Omega$. Assume that, for a point $x_{0}\in\Omega$ and some number $m\in (0, n-p)$,
\begin{equation}\label{Equ:lebesgue-p}
\mu(B(x_{0}, t))\le Ct^{m}
\end{equation}
for all $t\in (0, 3r_0)$ with $B(x_0, 3r_0)\subset \Omega$. Then, for $\varepsilon>0$, there are a subset $E\subset\Omega$, which
is $p$-thin for singular behavior at $x_{0}$, and a constant $C>0$ such that
\begin{equation}\label{Equ:wolff-upper}
W_{1,p}^{\mu}(x, \, r_0)\le C|x - x_0|^{-\frac{n-p-m +\varepsilon}{p-1}} \text{ for all $x\in\Omega\setminus E$}
\end{equation}
for $p\in [2, n)$.
\end{theorem}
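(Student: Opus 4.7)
The plan is to combine a dyadic annular decomposition of $\mu$ centered at $x_{0}$ with the capacitary weak-type estimate for Wolff potentials. Write $\mu=\sum_{j\ge 0}\mu_{j}$, where $\mu_{j}$ is the restriction of $\mu$ to the annulus $A_{j}=\{y:2^{-j-1}r_{0}\le|y-x_{0}|\le 2^{-j}r_{0}\}$. Hypothesis \eqref{Equ:lebesgue-p} immediately gives $\mu_{j}(\mathbb{R}^{n})\le C(2^{-j}r_{0})^{m}$. Since $p\ge 2$, we have $1/(p-1)\le 1$, so $s\mapsto s^{1/(p-1)}$ is subadditive, and pointwise
\begin{equation*}
W^{\mu}_{1,p}(x,r_{0})\le\sum_{j\ge 0}W^{\mu_{j}}_{1,p}(x,r_{0}).
\end{equation*}
This is the only place where the restriction $p\ge 2$ is used. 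Fix $x$ with $\rho:=|x-x_{0}|$ and let $k$ be the integer with $2^{-k}r_{0}\le\rho<2^{-k+1}r_{0}$.

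For the \emph{far} indices $|j-k|>C_{0}$ (with $C_{0}$ a fixed absolute constant), the distance $d_{j}:=\mathrm{dist}(x,A_{j})$ satisfies $d_{j}\gtrsim 2^{-j}r_{0}$ when $j<k-C_{0}$ and $d_{j}\gtrsim\rho$ when $j>k+C_{0}$. Since $\mu_{j}(B(x,t))=0$ for $t<d_{j}$ and is bounded by $C(2^{-j}r_{0})^{m}$ otherwise, a direct evaluation of the Wolff integral gives
\begin{equation*}
W^{\mu_{j}}_{1,p}(x,r_{0})\le C\,(2^{-j}r_{0})^{m/(p-1)}\,d_{j}^{-(n-p)/(p-1)}.
\end{equation*}
Because $0<m<n-p$, the geometric sums over the two tails $j<k-C_{0}$ and $j>k+C_{0}$ are each dominated by their boundary terms, and both boundary terms are comparable to $\rho^{-(n-p-m)/(p-1)}$. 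Thus the far part already achieves the target bound, even without an $\varepsilon$-loss, and does not require an exceptional set.

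The substantive issue is the \emph{near} range $|j-k|\le C_{0}$, where $\mu_{j}$ may concentrate arbitrarily close to $x$. Here I would invoke the capacitary weak-type inequality for Wolff potentials (see \cite{KM94, AH99}) in the local form
\begin{equation*}
\mathrm{cap}_{p}\!\left(\{x':W^{\mu_{j}}_{1,p}(x',2^{-j+2}r_{0})>\lambda\},\,\Omega_{k}(x_{0})\right)\le \frac{C\,\mu_{j}(\mathbb{R}^{n})}{\lambda^{p-1}}.
\end{equation*}
Take the threshold $\lambda_{k}:=(2^{-k}r_{0})^{-(n-p-m+\varepsilon)/(p-1)}$, let $F_{k}$ be the union over the finitely many near $j$ of the corresponding level sets, and set $E:=\bigcup_{k\ge 0}F_{k}$. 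Substituting the mass bound $\mu_{j}(\mathbb{R}^{n})\le C(2^{-k}r_{0})^{m}$ simplifies the right side to $C(2^{-k}r_{0})^{n-p+\varepsilon}$. Dividing by $\mathrm{cap}_{p}(\partial B(x_{0},2^{-k}r_{0}),B(x_{0},2^{-k+1}r_{0}))\sim(2^{-k}r_{0})^{n-p}$, the thinness series of Definition \ref{Def:quasi-p-thin} is controlled by $\sum_{k}C(2^{-k}r_{0})^{\varepsilon}$, which converges geometrically; thus $E$ is $p$-thin for singular behavior at $x_{0}$. For $x\in\Omega\setminus E$, combining the near bound $W^{\mu_{j}}_{1,p}(x,\cdot)\le\lambda_{k}$ with the far bound above yields precisely \eqref{Equ:wolff-upper}.

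The main obstacle is the third step: applying the capacitary weak-type inequality in its correct localized form (with $\Omega_{k}(x_{0})$ as the ambient domain) and calibrating the thresholds $\lambda_{k}$ to the annular scale so that the capacities decay like $(2^{-k}r_{0})^{n-p+\varepsilon}$. The loss $\varepsilon>0$ is irreducible---it is exactly the slack needed to make the thinness series summable against the natural capacity scale $(2^{-k}r_{0})^{n-p}$, and, equivalently, the room that allows the weak-type inequality to absorb the worst-case concentration of $\mu_{j}$ in the annulus containing $x$.
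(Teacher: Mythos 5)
Your argument is essentially correct and, as far as I can tell, follows the same route as the proofs in the cited sources \cite{LMQZ1, LMQZ2} (this survey states the theorem without reproducing a proof): dyadic localization of $\mu$ about $x_0$, subadditivity of $s\mapsto s^{1/(p-1)}$ for $p\ge 2$ (indeed the only place the restriction $p\ge 2$ enters), direct evaluation of the Wolff integral on the far annuli using $0<m<n-p$, and a capacitary weak-type estimate at the critical scales, with $\varepsilon$ supplying exactly the summability of the thinness series. One step must be made precise before the thinness verification goes through: as written, $F_k$ is a union of full level sets $\{W^{\mu_j}_{1,p}>\lambda_k\}$, which need not lie in $\omega_k(x_0)$, so $E\cap\omega_i(x_0)$ a priori receives contributions from \emph{every} $F_k$ and the bound $cap_p(E\cap\omega_i,\Omega_i)\le C(2^{-i}r_0)^{n-p+\varepsilon}$ does not follow; summing $cap_p(F_k,\cdot)$ over all $k$ produces a quantity of order $r_0^{n-p+\varepsilon}$ independent of $i$, which is not $o((2^{-i})^{n-p})$. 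The repair is free: set $F_k:=\omega_k(x_0)\cap\bigcup_{|j-k|\le C_0}\{W^{\mu_j}_{1,p}(\cdot,2^{-j+2}r_0)>\lambda_k\}$, which suffices since you only invoke the threshold $\lambda_k$ for $x\in\omega_k(x_0)$; then (after reconciling the radii $2^{-i}$ of Definition \ref{Def:quasi-p-thin} with your scales $2^{-k}r_0$) each $E\cap\omega_i$ meets only boundedly many $F_k$ and your geometric series argument is valid. Two further routine points worth a line each in a full write-up: the weak-type inequality with the \emph{relative} capacity $cap_p(\cdot,\Omega_k(x_0))$ follows from the global one because sets in $\omega_k$ lie at distance comparable to $2^{-k}r_0$ from $\partial\Omega_k(x_0)$ (alternatively derive it from Theorem \ref{Thm:main-use-wolff} applied to the $p$-potential of $\mu_j$ together with the standard capacitary estimate for $p$-superharmonic functions); and the piece of $\mu$ supported outside $B(x_0,r_0)$, omitted from your decomposition, contributes only a bounded amount to $W^{\mu}_{1,p}(x,r_0)$ for $x$ near $x_0$ and is absorbed into $C$.
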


And

\begin{theorem} (\cite{LMQZ1, LMQZ2}) \label{Thm:wolff potential upper bdd-singualarity}
Let $\mu$ be a nonnegative finite Radon measure in $\Omega$ and $B(x_0, 3r_0)\subset \Omega$. 
Then there is a subset $E$ that is $p$-thin for the singular behavior at $x_0$ such that
\begin{equation*}
\lim_{x\to x_0 \text{ and } x\notin E } |x-x_0|^\frac {n-p}{p-1} W_{1,p}^{\mu}(x, \, r_0) = \frac{p-1}{n-p} \mu(\{x_0\})^\frac 1{p-1}
\end{equation*}
for $p\in (1, n)$. Similarly, there is a subset $E$ that is $n$-thin for the singular behavior at $x_0$ such that
\begin{equation*}
\lim_{x\to x_0 \text{ and } x\notin E} \frac{W_{1,n}^{\mu}(x, \, r_0)}{\log\frac 1{|x-x_0|}} = \mu(\{x_0\})^\frac 1{n-1}.
\end{equation*}
\end{theorem}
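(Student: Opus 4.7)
I would decompose the measure as $\mu = c\,\delta_{x_0} + \nu$, where $c = \mu(\{x_0\})$ and $\nu(\{x_0\}) = 0$, and treat the two pieces separately. The Dirac part supplies the exact limit by direct computation, while the nonatomic part $\nu$ is to be shown negligible in the limit outside a $p$-thin set.

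For the Dirac part, since $\delta_{x_0}(B(x, t)) = 1$ precisely when $t > |x-x_0|$, one computes
\begin{equation*}
W^{c\delta_{x_0}}_{1,p}(x,\, r_0) = c^{1/(p-1)}\int_{|x-x_0|}^{r_0} t^{-(n-p)/(p-1)-1}\,dt = \tfrac{p-1}{n-p}\,c^{1/(p-1)}\bigl(|x-x_0|^{-(n-p)/(p-1)} - r_0^{-(n-p)/(p-1)}\bigr),
\end{equation*}
so that $|x-x_0|^{(n-p)/(p-1)}W^{c\delta_{x_0}}_{1,p}(x, r_0) \to \tfrac{p-1}{n-p}c^{1/(p-1)}$ with no exceptional set. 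Since $\mu \geq c\delta_{x_0}$ as measures, the monotonicity $W^{\mu}_{1,p}(x,r_0) \geq W^{c\delta_{x_0}}_{1,p}(x,r_0)$ immediately delivers the matching liminf lower bound for $\mu$. The case $p = n$ is identical with $W^{c\delta_{x_0}}_{1,n}(x,r_0) = c^{1/(n-1)}\log(r_0/|x-x_0|)$.

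The main work is to construct a $p$-thin set $E$ at $x_0$ such that, writing $s = |x-x_0|$,
\begin{equation*}
\lim_{x\to x_0,\ x\notin E} s^{(n-p)/(p-1)}\,W^{\nu}_{1,p}(x,r_0) = 0.
\end{equation*}
I would split the Wolff integral at the dyadic breakpoints as $W^{\nu}_{1,p}(x,r_0) = I_1 + I_2 + I_3$ over $t\in(0, s/2]$, $(s/2, 2s]$, $(2s, r_0]$. On the middle range $B(x,t)\subset B(x_0, 3s)$ gives $s^{(n-p)/(p-1)}I_2 \leq C\,\nu(B(x_0, 3s))^{1/(p-1)} \to 0$. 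On the outer range, $B(x,t)\subset B(x_0, 3t/2)$ together with cutting the integral at $t = s^{\alpha}$ for some $\alpha\in(0,1)$ gives $s^{(n-p)/(p-1)}I_3 \to 0$. Both arguments use only that $\nu(B(x_0,\rho))\to 0$ as $\rho\to 0$ (since $\nu$ has no atom at $x_0$), so neither piece requires an exceptional set. The inner piece $I_1$ depends only on the annular restriction $\nu|_{\{s/2\leq|y-x_0|\leq 3s/2\}}$, whose total mass vanishes with $s$. To handle it, I would apply Theorem \ref{Thm:wolff potential upper bdd} at a dyadic sequence of scales $r_k = 2^{-k}\downarrow 0$ to the restrictions $\nu|_{B(x_0, r_k)}$, choosing the exponent $m$ and the mass constant in each application so that the capacity contribution of each resulting exceptional set $E_k$ is summable across dyadic annuli. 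Countable subadditivity of $cap_p$ (the $cap_p$-analogue of Lemma \ref{Lem:l-basic property}(2)) together with the summability will then ensure that $E := \bigcup_k E_k$ still satisfies the series convergence of Definition \ref{Def:quasi-p-thin}, so $E$ is $p$-thin at $x_0$ and $s^{(n-p)/(p-1)}I_1(x)\to 0$ off $E$.

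To reassemble the pieces, for $p\geq 2$ I would apply the subadditivity $(a+b)^{1/(p-1)}\leq a^{1/(p-1)}+b^{1/(p-1)}$ with $a = c\,\mathbf{1}_{\{t > s\}}/t^{n-p}$ and $b = \nu(B(x,t))/t^{n-p}$ to obtain the matching limsup. For $p\in(1,2)$, this subadditivity reverses, but the factorization $(a+b)^{1/(p-1)} = a^{1/(p-1)}(1 + b/a)^{1/(p-1)}$, combined with the uniform smallness $\nu(B(x,t))/c \to 0$ for $t\geq s$ (via $\nu(B(x_0, 3t/2))\to 0$), still yields the required upper bound. The $p = n$ case runs through the same three-piece decomposition with the logarithmic asymptotic. \textbf{The main obstacle is the construction of $E$}: balancing the decay of $\nu(B(x_0, r_k))$ against the scale-by-scale capacity loss produced by Theorem \ref{Thm:wolff potential upper bdd} so that the series defining $p$-thinness in Definition \ref{Def:quasi-p-thin} converges is the crux of the nonlinear potential-theoretic argument.
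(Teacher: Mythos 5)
Your overall architecture is sound and matches the standard route: the decomposition $\mu=c\,\delta_{x_0}+\nu$ with $c=\mu(\{x_0\})$, the exact evaluation of $W^{c\delta_{x_0}}_{1,p}$, the monotonicity lower bound (which indeed needs no exceptional set), the treatment of the middle and outer ranges $I_2,I_3$ using only $\nu(B(x_0,\rho))\to 0$, and the factorization $(a+b)^{1/(p-1)}=a^{1/(p-1)}(1+b/a)^{1/(p-1)}$ to recover the sharp constant when $p\in(1,2)$ are all correct.

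The gap is precisely where you flag it, and your proposed fix does not close it. Bootstrapping from Theorem \ref{Thm:wolff potential upper bdd} fails for three reasons. First, that theorem needs $\mu(B(x_0,t))\le Ct^{m}$ with $m>0$, whereas a general nonatomic $\nu$ only satisfies $\nu(B(x_0,t))=o(1)$ (e.g. $\nu(B(x_0,t))\sim 1/\log(1/t)$); restricting to $B(x_0,r_k)$ manufactures a positive exponent only at the price of constants $C_k=\nu(B(x_0,r_k))r_k^{-m}$ that need not stay bounded. Second, and more fundamentally, Theorem \ref{Thm:wolff potential upper bdd} asserts only that its exceptional set is $p$-thin; it provides no quantitative bound on $cap_p(E_k\cap\omega_i(x_0),\Omega_i(x_0))$, so there is nothing to sum over $k$ --- a countable union of $p$-thin sets need not be $p$-thin, and countable subadditivity of $cap_p$ alone cannot rescue this. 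Third, that theorem is stated only for $p\in[2,n)$, while the claim covers $p\in(1,n]$. The missing ingredient is the capacitary weak-type estimate for Wolff potentials, $cap_p(\{x: W^{\omega}_{1,p}(x,r)>\lambda\},\cdot)\le C\,\omega(\mathbb{R}^n)\lambda^{-(p-1)}$ (obtainable from the lower bound in Theorem \ref{Thm:main-use-wolff} together with the weak-type capacity estimate for $p$-superharmonic functions, or directly as in Adams--Hedberg). Applying it on each dyadic annulus to $\nu_i=\nu|_{\Omega_i(x_0)}$ with threshold $\lambda=\epsilon_i\,2^{i(n-p)/(p-1)}$ (resp. $\lambda=\epsilon_i\, i$ when $p=n$), where $\epsilon_i\to 0$ slowly enough that $\sum_i\nu(\Omega_i(x_0))\,\epsilon_i^{-(p-1)}<\infty$ --- possible since $\sum_i\nu(\Omega_i(x_0))<\infty$ by bounded overlap of the annuli --- produces exceptional sets whose ratios in Definition \ref{Def:quasi-p-thin} are summable by construction. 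That estimate, not Theorem \ref{Thm:wolff potential upper bdd}, is the engine of the argument in \cite{LMQZ2}.
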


It turns out the notion of thinness indeed facilitates the fine topology (cf. \cite{KM94, Mi96}) in the following context, which gives the equivalent
analytic definition of thinness. 

\begin{theorem} (\cite{LMQZ1, LMQZ2}) \label{Thm:converse-p-thin} Suppose that $E$ is a subset that is $p$-thin for the singular behavior at the origin
according to Definition \ref{Def:quasi-p-thin} for $p\in (1, n]$. And suppose that the origin is in $\bar E\setminus E$. Then, when $p\in (1, n)$, 
there is a Radon measure $\mu$ in a neighborhood of the origin such that, for some fixed $r_0>0$, 
$$
\lim_{x\to 0 \text{ and } x\in E} |x|^\frac {n-p}{p-1}W^\mu_{1, p}(x, r_0) = \infty.
$$
Similarly, when $p=n$, there is a Radon measure $\mu$ in a neighborhood of the origin such that, for some fixed $r_0>0$, 
$$
\lim_{x\to 0 \text{ and } x\in E} \frac {W^\mu_{1, n}(x, r_0)}{\log\frac 1{|x|}} = \infty.
$$
\end{theorem}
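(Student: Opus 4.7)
My plan is to construct $\mu$ as a suitably weighted superposition of the $p$-capacitary measures of the dyadic pieces $E\cap\omega_i(0)$, with weights tuned against the thinness hypothesis. Writing $c_i = cap_p(E\cap\omega_i(0),\Omega_i(0))$ and using the scaling $cap_p(\partial B_{2^{-i}}(0),B_{2^{-i+1}}(0))\asymp 2^{-i(n-p)}$ for $p<n$, the $p$-thinness assumption in Definition \ref{Def:quasi-p-thin} reads
$$
\sum_{i\ge 1} 2^{i(n-p)}\,c_i < \infty.
$$
The analogous reformulation in the endpoint case $p=n$ is $\sum_i i^{n-1} c_i<\infty$, since $cap_n(\partial B_{2^{-i}}(0),B_{2^{-i+1}}(0))$ is bounded below by a positive constant independent of $i$.

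Let $\nu_i$ be the $p$-capacitary measure of the condenser $(\overline{E\cap\omega_i(0)},\Omega_i(0))$. Standard nonlinear potential theory (see \cite{HKM93, KM94}) gives that $\nu_i$ is a nonnegative Radon measure supported in $\overline{E\cap\omega_i(0)}$ whose total mass is comparable to $c_i$, and whose Wolff potential satisfies $W^{\nu_i}_{1,p}(x,r_0)\ge c_0>0$ for every $x\in E\cap\omega_i(0)$ outside an exceptional set of $p$-capacity zero. Using the homogeneity $W^{a\nu}_{1,p}=a^{1/(p-1)}W^{\nu}_{1,p}$, I would set
$$
\mu=\sum_i a_i\,\nu_i \quad\text{with}\quad a_i = 2^{i(n-p)}\,b_i,
$$
so that on $x\in E\cap\omega_i(0)$ outside the exceptional set,
$$
|x|^{\frac{n-p}{p-1}}\,W^{\mu}_{1,p}(x,r_0)\ge C\,2^{-i(n-p)/(p-1)}\,a_i^{1/(p-1)} = C\,c_0\,b_i^{1/(p-1)}.
$$
Thus any sequence $b_i\to\infty$ forces the claimed blow-up along $E$, while the total mass of $\mu$ is controlled by $\sum_i b_i\,2^{i(n-p)}\,c_i$, which must be kept finite.

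The selection of $b_i$ is a standard Dini-type argument: given $\sigma_i\ge 0$ with $\sum\sigma_i<\infty$, set $s_i=\sum_{j\ge i}\sigma_j$ and take $b_i=s_i^{-1/2}$; then $b_i\to\infty$ while
$$
\sum b_i\sigma_i=\sum\frac{s_i-s_{i+1}}{\sqrt{s_i}}\le 2\sum(\sqrt{s_i}-\sqrt{s_{i+1}})<\infty.
$$
Applying this to $\sigma_i=2^{i(n-p)}c_i$ produces admissible weights for $p\in(1,n)$. The endpoint $p=n$ is parallel: one replaces the scaling factor $|x|^{(n-p)/(p-1)}$ by $1/\log(1/|x|)$, uses $\log(1/|x|)\asymp i$ on $\omega_i(0)$, and takes $a_i=i^{n-1}b_i$ with $\sigma_i=i^{n-1}c_i$. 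The main obstacle will be the Kellogg exceptional set on which the capacitary lower bound fails; since each such set has zero $p$-capacity in $\Omega_i(0)$, their countable union is itself $p$-thin at the origin and can be removed from $E$ at the outset, or alternatively neutralized by adding auxiliary point-type measures of negligible total mass that force the blow-up there as well.
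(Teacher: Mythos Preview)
The paper is expository and does not include a proof of this theorem; it simply cites \cite{LMQZ1, LMQZ2}. Your construction---a weighted superposition $\mu=\sum_i a_i\nu_i$ of the $p$-capacitary measures of the dyadic pieces, with weights selected by the tail-sum (Dini) device---is the natural approach, and the scaling of $cap_p$, the homogeneity $W^{a\nu}_{1,p}=a^{1/(p-1)}W^{\nu}_{1,p}$, the monotonicity $W^{\mu}_{1,p}\ge W^{a_i\nu_i}_{1,p}$, and the total-mass bookkeeping are all correct in both regimes $p\in(1,n)$ and $p=n$.

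The one genuine gap is your treatment of the Kellogg exceptional set $N=\bigcup_i N_i$ on which the lower bound $W^{\nu_i}_{1,p}\ge c_0$ may fail. Your first suggestion, that $N$ ``can be removed from $E$ at the outset,'' does not work: the theorem requires blow-up along the \emph{given} $E$, and passing to $E\setminus N$ changes the conclusion; worse, rebuilding the capacitary measures for $E\setminus N$ produces a new exceptional set, and this does not terminate. Your second suggestion, ``auxiliary point-type measures,'' is the right instinct but the wrong object: point masses cannot cover a general $p$-polar set. The correct repair is the standard characterization of $p$-polar sets (cf.\ \cite{HKM93, KM94}): since $cap_p(N)=0$, there is a $p$-superharmonic function $v\not\equiv\infty$ in a ball about the origin with $v=+\infty$ on $N$; applying Theorem~\ref{Thm:main-use-wolff} to its Riesz measure $\mu'=-\Delta_p v$ (noting that $\inf_{B(x,r)}v<\infty$ because $\{v=\infty\}$ is itself $p$-polar) forces $W^{\mu'}_{1,p}(x,r_0)=+\infty$ for every $x\in N$. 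Replacing $\mu$ by $\mu+\mu'$ then gives the claimed blow-up on all of $E$. An alternative that sidesteps the exceptional set entirely is to use outer regularity of $cap_p$ to enclose each $E\cap\omega_i$ in an open $G_i\subset\subset\Omega_i$ with $cap_p(G_i,\Omega_i)\le 2c_i$ and take $\nu_i$ to be the capacitary measure of a compact neighborhood $K_i\subset G_i$ with $E\cap\omega_i\subset\mathrm{int}(K_i)$; the capacitary potential then equals $1$ on $\mathrm{int}(K_i)$ (not merely q.e.), so the Wolff lower bound holds at every point of $E\cap\omega_i$.
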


In fact, we use Theorem \ref{Thm:main-use-wolff} to derive the estimates for $p$-superharmonic functions. From Theorem 
\ref{Thm:wolff potential upper bdd-singualarity}, we have

\begin{theorem} (\cite{LMQZ2}) \label{Thm:main-asymptotic-p}
Suppose that $u$ is a nonnagetive $p$-superharmonic function in $\Omega\subset\mathbb{R}^n$ satisfying
$$
-\Delta_p u = \mu \text{ in $\Omega$}
$$
for a nonnegative finite Radon measure $\mu$ on $\Omega$ and $p\in (1, n]$. Then, for $x_0\in\Omega$, there is a subset $E$ that is $p$-thin for singular
behavior at $x_0$ such that
\begin{equation}\label{Equ:p-super-asymptotic}
\lim_{x\to x_0\text{ and } x\notin E}  \frac{u(x)}{G_p(x, x_0)} = m = \left\{\aligned \frac{p-1}{n-p} (\frac {\mu(\{0\})}{|\mathbb{S}^{n-1}|})^\frac 1{p-1}
& \text{ when $p\in (1, n)$}\\ (\frac {\mu(\{0\})}{|\mathbb{S}^{n-1}|})^\frac 1{n-1} & \text{ when $p=n$}\endaligned\right.,
\end{equation} 
where
\begin{equation}\label{Equ:G-p}
G_p(x, x_0) =\left\{\aligned |x-x_0|^{-\frac {n-p}{p-1}} & \text{ when } p\in (1, n)\\
- \log |x-x_0| & \text{ when } p=n\endaligned\right..
\end{equation}
Moreover $u(x) \geq mG_p(x, x_0) - c_0$ for some $c_0$ and all $x$ in a neighborhood of $x_0$.
\end{theorem}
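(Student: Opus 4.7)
The plan is to combine the sharp Wolff-potential asymptotics of Theorem \ref{Thm:wolff potential upper bdd-singualarity} with the two-sided Kilpel\"ainen--Mal\'y bound (Theorem \ref{Thm:main-use-wolff}), and then to upgrade the non-sharp constants $c_1,c_2$ appearing there to the exact value $m$ by comparison with an explicit $p$-Green function carrying the atomic mass $a:=\mu(\{x_0\})$ at $x_0$. After translation I assume $x_0=0$; throughout the argument I split $\mu=a\delta_0+\nu$ with $\nu(\{0\})=0$, so that the main asymptotic contribution comes from $a\delta_0$ and the remainder $\nu$ is relegated to a lower-order perturbation.

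For the "moreover" lower bound, and hence the $\liminf$ half of \eqref{Equ:p-super-asymptotic}, I would argue by direct comparison. On a small ball $B(0,R)\subset\Omega$ let $v_a$ denote the explicit $p$-Green function,
$$v_a(x)=\frac{p-1}{n-p}\,|\mathbb{S}^{n-1}|^{-\frac{1}{p-1}}\,a^{\frac{1}{p-1}}\bigl(|x|^{-\frac{n-p}{p-1}}-R^{-\frac{n-p}{p-1}}\bigr)$$
for $p<n$, with the analogous logarithmic formula when $p=n$; a direct divergence-theorem computation verifies $-\Delta_p v_a=a\delta_0$ in $B(0,R)$ with $v_a=0$ on $\partial B(0,R)$. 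Since $\mu\ge a\delta_0$ as nonnegative measures and $u\ge 0=v_a$ on $\partial B(0,R)$, the comparison principle for $p$-superharmonic functions forces $u\ge v_a$ throughout $B(0,R)$. Reading off the leading term yields $u(x)\ge mG_p(x,0)-c_0$ with $c_0=mR^{-(n-p)/(p-1)}$, giving both the "moreover" statement and $\liminf_{x\to 0}u(x)/G_p(x,0)\ge m$ without excluding any thin set.

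For the matching $\limsup$ bound, I would apply Theorem \ref{Thm:wolff potential upper bdd-singualarity} to each piece of the split: it provides a $p$-thin set $E_1$ outside which $|x|^{(n-p)/(p-1)}W^{\mu}_{1,p}(x,r_0)\to \tfrac{p-1}{n-p}\,a^{1/(p-1)}$, and a $p$-thin set $E_2$ outside which $|x|^{(n-p)/(p-1)}W^{\nu}_{1,p}(x,r_0)\to 0$ (the latter because $\nu$ has no atom at $0$). The upper Kilpel\"ainen--Mal\'y bound $u(x)\le c_2(\inf_{B(x,r)}u+W^{\mu}_{1,p}(x,2r))$ then gives a preliminary $\limsup$ estimate with the wrong constant $c_2$. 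To sharpen it to the exact coefficient $m$, I would erect an upper barrier $v_{a+\varepsilon}$ on an annulus $B(0,R)\setminus\overline{B(0,\rho)}$ and use the $W^{\nu}_{1,p}$-decay on $\partial B(0,\rho)$, together with the Wolff two-sided bound, to dominate $u\le v_{a+\varepsilon}$ on the inner and outer boundaries outside an exceptional set that remains $p$-thin. Comparison inside the annulus propagates the inequality, and letting $\varepsilon\to 0$ along a sequence closes the gap.

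The main obstacle is exactly this sharp-constant upgrade: moving from the non-universal constant $c_2$ in Theorem \ref{Thm:main-use-wolff} to the exact geometric factor $|\mathbb{S}^{n-1}|^{-1/(p-1)}$ inside $m$ forces the $\varepsilon$-barrier comparison to be carried out on a shrinking sequence of annular radii $\rho_k\to 0$, with $u\le v_{a+\varepsilon}$ verified on each $\partial B(0,\rho_k)$ outside its own thin exceptional set. One must then check that the union over $k$ of these exceptional sets remains $p$-thin for singular behavior at $0$; this follows from countable subadditivity of the $p$-capacity (the nonlinear analog of part (2) of Lemma \ref{Lem:l-basic property}), combined with the uniform $o(G_p)$ decay of $W^{\nu}_{1,p}$ granted by Theorem \ref{Thm:wolff potential upper bdd-singualarity}.
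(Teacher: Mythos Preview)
The overall architecture---combine Theorem~\ref{Thm:main-use-wolff} with Theorem~\ref{Thm:wolff potential upper bdd-singualarity}, then upgrade the constants $c_1,c_2$ to the sharp value $m$ via the explicit $p$-Green function---matches what the exposition indicates (the detailed argument is deferred to \cite{LMQZ2}). Your decomposition $\mu=a\delta_0+\nu$ and the separate use of Theorem~\ref{Thm:wolff potential upper bdd-singualarity} on $\nu$ are exactly the right moves, and you have correctly isolated the sharp-constant step as the crux.

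Two of your comparison steps, however, do not go through as written. For the lower bound you assert that ``$\mu\ge a\delta_0$ and $u\ge 0=v_a$ on $\partial B(0,R)$'' forces $u\ge v_a$ by ``the comparison principle for $p$-superharmonic functions.'' That principle (Definition~\ref{Def:p-superhar}) only compares a $p$-superharmonic $u$ with a $p$-\emph{harmonic} competitor, and $v_a$ is $p$-harmonic only on the punctured ball. Running the comparison on annuli $B(0,R)\setminus\overline{B(0,\rho)}$ would require $u\ge v_a$ on the inner sphere $\partial B(0,\rho)$, which is the very estimate you want; and the monotone-operator trick with test function $(v_a-u)_+$ is blocked because neither $u$ nor $v_a$ lies in $W^{1,p}$ across the origin. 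This can be repaired---for instance by approximating $\mu$ and $a\delta_0$ by bounded data, applying the genuine $W^{1,p}$ comparison to the approximants, and passing to the limit---but it is not the one-line comparison you invoke.

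The upper barrier has a more basic flaw. On the annulus $v_{a+\varepsilon}$ is $p$-harmonic while $u$ is $p$-superharmonic, so the only comparison the maximum principle supplies is of the form $u\ge h$ for $p$-harmonic $h$, never $u\le h$: a $p$-superharmonic function cannot be capped from above by a $p$-harmonic barrier. Even setting the direction aside, knowing $u\le v_{a+\varepsilon}$ on $\partial B(0,\rho_k)$ only \emph{outside} a thin exceptional set would still not license comparison in the annulus, which needs the inequality on the full boundary; countable subadditivity of $cap_p$ controls the union of the exceptional sets at the origin, but does nothing to remove them from a fixed sphere. The sharp $\limsup$ therefore cannot be produced by a barrier above $u$; it has to be extracted from the upper Kilpel\"ainen--Mal\'y inequality itself, feeding the already-established pointwise lower bound and the Wolff asymptotics for $\nu$ back into the $\inf_{B(x,r)}u$ and $W^\mu_{1,p}$ terms, rather than through a PDE comparison from above.
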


From Theorem \ref{Thm:converse-p-thin}, we have

\begin{theorem} (\cite{LMQZ2}) \label{Thm:infty-p-intr}
Suppose that $E$ is a subset that is $p$-thin for the singular behavior at the origin
according to Definition \ref{Def:quasi-p-thin} for $p\in (1, n]$. And suppose that the origin is in $\bar E\setminus E$. Then, when $p\in (1, n)$, 
there is a $p$-superharmonic function $u$ in a neighborhood of the origin such that
$$
\lim_{x\to 0 \text{ and } x\in E} |x|^\frac {n-p}{p-1} u(x)  = \infty.
$$
Similarly, when $p=n$, there is a $n$-superharmonic function $u$ in a neighborhood of the origin such that
$$
\lim_{x\to 0 \text{ and } x\in E} \frac {u(x)}{\log\frac 1{|x|}} = \infty.
$$
\end{theorem}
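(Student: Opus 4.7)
The plan is to combine Theorem \ref{Thm:converse-p-thin} with the fundamental lower bound in Theorem \ref{Thm:main-use-wolff}. The former already produces a measure whose Wolff potential blows up faster than the Green function along the thin set $E$; the latter transports that blow-up from the potential to any $p$-superharmonic function whose Riesz measure equals that measure. Thus the only real work is to realize such a measure as the Riesz measure of a genuine $p$-superharmonic function defined in a neighborhood of the origin.

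The steps, in order, are as follows. First, I would invoke Theorem \ref{Thm:converse-p-thin} to fix a nonnegative finite Radon measure $\mu$, supported in a ball $B(0, r_0)$ about the origin, such that
$$
\lim_{x\to 0,\ x\in E} |x|^{\frac{n-p}{p-1}} W^{\mu}_{1,p}(x, r_0) = +\infty
$$
when $p\in (1,n)$, with the analogous statement normalized by $\log(1/|x|)$ when $p=n$. Second, by the Kilpel\"ainen--Mal\'y existence theory for the measure equation \eqref{Equ:p-laplace-measure}, I would construct a nonnegative $p$-superharmonic function $u$ on a larger ball $B(0, 3r_0)$ whose Riesz measure is precisely $\mu$; such a $u$ exists because $\mu$ is finite and compactly supported, and may be realized as the solution of the Dirichlet problem $-\Delta_p u = \mu$ with zero boundary data. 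Third, the lower bound in Theorem \ref{Thm:main-use-wolff} gives
$$
u(x) \ge c_1(n,p)\, W^{\mu}_{1,p}(x, r_0) \quad \text{for all } x\in B(0, r_0).
$$
Combining the first and third steps and letting $x\to 0$ along $E$ yields the desired limit, in both the $p\in(1,n)$ and the $p=n$ cases, with no further geometric input needed.

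The hard part, once Theorem \ref{Thm:converse-p-thin} is granted, is the existence step: one must produce a $p$-superharmonic function whose Riesz measure equals $\mu$ exactly, not just comparably, since only true equality of the Riesz measures permits applying Theorem \ref{Thm:main-use-wolff} with the measure furnished by Theorem \ref{Thm:converse-p-thin}. The standard route is to approximate $\mu$ by smooth measures $\mu_k$, solve $-\Delta_p u_k = \mu_k$ on $B(0, 3r_0)$ with zero boundary data using variational methods, and then pass to a $p$-superharmonic limit via the comparison principle and weak lower semicontinuity; the subtlety is verifying that weak convergence $\mu_k \rightharpoonup \mu$ propagates to equality of Riesz measures in the limit, which is exactly the content of the Kilpel\"ainen--Mal\'y stability theory. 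All of the geometric information in the statement has effectively been compressed into Theorem \ref{Thm:converse-p-thin}, so this existence-and-stability step is the only serious ingredient that is not already on the table.
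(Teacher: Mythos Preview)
Your proposal is correct and follows exactly the route the paper indicates: the paper simply writes ``From Theorem \ref{Thm:converse-p-thin}, we have'' Theorem \ref{Thm:infty-p-intr}, and your three steps (produce $\mu$ via Theorem \ref{Thm:converse-p-thin}, realize $\mu$ as the Riesz measure of a nonnegative $p$-superharmonic $u$ via the Kilpel\"ainen--Mal\'y existence theory, then apply the lower bound in Theorem \ref{Thm:main-use-wolff}) are precisely how that implication is carried out. One minor bookkeeping point: to apply the lower bound at a point $x$ near the origin with radius $r_0$ you need $B(x,3r_0)$ inside the domain of $u$, so either solve on a slightly larger ball than $B(0,3r_0)$ or apply the estimate with a fixed smaller radius $r<r_0$ and note that $W^\mu_{1,p}(x,r_0)-W^\mu_{1,p}(x,r)$ is bounded since $\mu$ is finite; this does not affect the argument.
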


%%%%%%%%%%%%%%%%%%%

\subsection{Consequences in conformal geometry}\label{Subsec:geometric-consequences}

In this subsection we will use the $p$-Laplace equation \eqref{Equ:p-laplace} and Theorem \ref{Thm:wolff potential upper bdd} to derive the
consequence of the curvature condition $A^{(p)}\geq 0$. In the light of Lemma \ref{Lem:Kpata} from Subsection \ref{Subsec:Riesz-estimates},
we apply Theorem \ref{Thm:wolff potential upper bdd} to prove

\begin{theorem} (\cite{LMQZ1, LMQZ2}) \label{Thm:main-geometric-thm-1}
Suppose that $S$ is a compact subset of a bounded domain $\Omega\subset\mathbb{R}^n$.  And suppose that there is a metric $\bar g$ on 
$\Omega\setminus S$ such that 
\begin{itemize}
\item it is conformal to the Euclidean metric $g_{\mathbb{E}}$;
\item it is geodesically complete near $S$. 
\end{itemize}
Assume that $A^{(p)} [\bar g]\geq 0$ for some $p\in [2, n)$.
Then
$$
dim_{\mathcal{H}}(S) \leq \frac {n-p}2
$$
for $p\in [2, n)$.
\end{theorem}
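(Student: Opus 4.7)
The plan is to exploit equation \eqref{Equ:p-laplace} to view the conformal factor $u$ as a $p$-superharmonic function, control it via the Wolff potential estimates of Subsection \ref{Subsec:nonlinear-potential}, and use the $p$-thinness to produce a Euclidean ray of finite $\bar g$-length terminating on $S$, contradicting completeness. Writing $\bar g = u^{4(p-1)/(n-p)}\, g_{\mathbb{E}}$ on $\Omega \setminus S$, the vanishing of $A^{(p)}$ for the Euclidean background collapses \eqref{Equ:p-laplace} to
\begin{equation*}
-\Delta_p u \;=\; \tfrac{n-p}{2(p-1)}\,(S^{(p)}(\nabla u))_{\bar g}\, u^{q} \;\geq\; 0 \qquad \text{on } \Omega \setminus S,
\end{equation*}
with $q = 2p(p-1)/(n-p)+1 > 0$. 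Completeness of $\bar g$ near $S$ forces $u \to \infty$ along any path into $S$, so the lower-semicontinuous extension of $u$ by $+\infty$ on $S$ is $p$-superharmonic on $\Omega$; by \cite{KM92} this yields a nonnegative finite Radon measure $\mu$ on $\Omega$ with $-\Delta_p u = \mu$.

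Argue by contradiction: suppose $\dim_{\mathcal{H}}(S) > (n-p)/2$, and fix $d$ with $(n-p)/2 < d < \min\{n-p,\,\dim_{\mathcal{H}}(S)\}$. Since $\mathcal{H}_d(S) > 0$, Lemma \ref{Lem:Kpata} applied to $\mu$ produces a point $x_0 \in S$ such that
\begin{equation*}
\mu(B(x_0, t)) \;\leq\; C\, t^d \qquad \text{for all small } t > 0.
\end{equation*}
Feeding $m = d$ and a small $\varepsilon > 0$ into Theorem \ref{Thm:wolff potential upper bdd} furnishes a set $E \subset \Omega$, $p$-thin for singular behavior at $x_0$, together with
\begin{equation*}
W^{\mu}_{1,p}(x, r_0) \;\leq\; C\, |x-x_0|^{-(n-p-d+\varepsilon)/(p-1)} \qquad \text{for all } x \in \Omega \setminus E.
\end{equation*}
Combining this with the upper estimate of Theorem \ref{Thm:main-use-wolff} at a fixed small radius $r$, where $\inf_{B(x,r)} u$ stays bounded by values of $u$ on an annular shell away from $x_0$, yields the same form of bound for $u(x)$ itself on $\Omega \setminus E$ near $x_0$.

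Lemma \ref{Lemsegment-escape} now provides a Euclidean ray $\gamma$ from $x_0$ that avoids $E$ inside some ball $B_\delta(x_0)$. Parametrising $\gamma$ by Euclidean arclength $r$, the $\bar g$-length of $\gamma|_{(0,\delta)}$ is
\begin{equation*}
L_{\bar g}(\gamma) \;=\; \int_0^\delta u(\gamma(r))^{2(p-1)/(n-p)}\, dr \;\leq\; C \int_0^\delta r^{-2(n-p-d+\varepsilon)/(n-p)}\, dr.
\end{equation*}
Choosing $\varepsilon < d - (n-p)/2$ puts the exponent strictly above $-1$, so the integral converges: $\gamma$ has finite $\bar g$-length yet terminates at $x_0 \in S$. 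This contradicts the geodesic completeness of $\bar g$ near $S$, forcing $\dim_{\mathcal{H}}(S) \leq (n-p)/2$.

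The main obstacle is the preparatory step of lifting $u$ from a $p$-superharmonic function on $\Omega \setminus S$ to a genuine $p$-superharmonic function on all of $\Omega$, with a representing measure $\mu$ supported on the full domain; this rests on a uniform blow-up of $u$ near $S$ so that the lower-semicontinuous extension respects the comparison principle of Definition \ref{Def:p-superhar}, which is precisely where the completeness hypothesis enters. A secondary technical point is the control of $\inf_{B(x,r)} u$ in Theorem \ref{Thm:main-use-wolff}, which depends on local boundedness of $u$ on annular regions strictly away from $S$ and ultimately on the elliptic regularity for $\Delta_p$.
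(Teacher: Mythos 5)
Your proposal is correct and follows exactly the route the paper indicates: the curvature hypothesis turns \eqref{Equ:p-laplace} into $-\Delta_p u=\mu\geq 0$, Lemma \ref{Lem:Kpata} selects a point of $S$ with $\mu(B(x_0,t))\le Ct^d$, Theorem \ref{Thm:wolff potential upper bdd} together with Theorem \ref{Thm:main-use-wolff} bounds $u$ off a $p$-thin set, and Lemma \ref{Lemsegment-escape} yields a ray of finite $\bar g$-length contradicting completeness. The one step you flag as the "main obstacle" (that completeness forces $u\to\infty$ at $S$ so that $u$ extends to a $p$-superharmonic function on all of $\Omega$ with finite Riesz measure) is precisely the step the paper also delegates to \cite[Lemmas 3.2 and 3.3]{MQ21a}, so no genuine gap relative to the paper's own argument.
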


As a consequence,

\begin{theorem} (\cite{LMQZ1}) \label{Thm:main-geometric-thm-2} Suppose that $S$ is a closed subset of the sphere $\mathbb{S}^n$.  
And suppose that there is a metric $\bar g$ on $\mathbb{S}^n\setminus S$ that is conformal to the standard round metric 
$g_{\mathbb{S}}$. Assume that it is geodesically complete near $S$ and that $A^{(p)} [\bar g]\geq 0$ for some $p\in [2, n)$.
Then
$$
dim_{\mathcal{H}}(S) \leq \frac {n-p}2.
$$
\end{theorem}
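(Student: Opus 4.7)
My plan is to reduce this sphere statement to its Euclidean counterpart, Theorem \ref{Thm:main-geometric-thm-1}, by a single stereographic projection from a point of $\mathbb{S}^n\setminus S$.

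Because the complete conformal metric $\bar g$ is defined on $\mathbb{S}^n\setminus S$, this open set is nonempty, so I pick a point $q\in\mathbb{S}^n\setminus S$ and consider stereographic projection $\sigma:\mathbb{S}^n\setminus\{q\}\to\mathbb{R}^n$. Since $S$ is closed in $\mathbb{S}^n$ and misses $q$, it lies in the compact complement of a small open neighborhood of $q$, so $\tilde S:=\sigma(S)$ is a compact subset of $\mathbb{R}^n$, contained in some bounded domain $\Omega$. Let $\tilde g:=(\sigma^{-1})^*\bar g$ on $\Omega\setminus \tilde S$. Stereographic projection carries the round metric $g_{\mathbb{S}}$ to a conformal multiple of the Euclidean metric $g_{\mathbb{E}}$, and since $\bar g$ is conformal to $g_{\mathbb{S}}$, the pulled-back metric $\tilde g$ is conformal to $g_{\mathbb{E}}$ on $\Omega\setminus \tilde S$. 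Because $\sigma$ is a smooth diffeomorphism on a neighborhood of $S$, geodesic completeness of $\bar g$ near $S$ transfers to geodesic completeness of $\tilde g$ near $\tilde S$. The condition $A^{(p)}[\tilde g]\geq 0$ holds because $A^{(p)}$ is a Riemannian invariant and $\sigma$ is an isometry from $(\mathbb{S}^n\setminus S,\bar g)$ to $(\sigma(\mathbb{S}^n\setminus S),\tilde g)$.

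Having verified every hypothesis of Theorem \ref{Thm:main-geometric-thm-1} for $\tilde S\subset\Omega$ equipped with $\tilde g$, I obtain $\dim_{\mathcal{H}}(\tilde S)\leq (n-p)/2$. Finally, $\sigma$ restricted to any compact neighborhood of $S$ avoiding $q$ is a smooth diffeomorphism and hence bi-Lipschitz (with respect to any smooth background metric on $\mathbb{S}^n$ and the Euclidean metric on $\mathbb{R}^n$), so Hausdorff dimension is preserved under $\sigma$, giving $\dim_{\mathcal{H}}(S)=\dim_{\mathcal{H}}(\tilde S)\leq (n-p)/2$. I do not expect any serious obstacle, since this is a routine localization; the only point that genuinely needs checking is that $\tilde g$ remains conformal to the flat Euclidean metric rather than merely to some smooth metric on $\mathbb{R}^n$, and this is exactly the conformal character of stereographic projection.
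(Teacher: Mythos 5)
Your reduction via stereographic projection from a point $q\in\mathbb{S}^n\setminus S$ is precisely how the paper obtains this statement ``as a consequence'' of the Euclidean Theorem \ref{Thm:main-geometric-thm-1}, and every step you flag --- conformality of the projection, transfer of geodesic completeness and of $A^{(p)}[\bar g]\geq 0$ under the induced isometry, and bi-Lipschitz invariance of Hausdorff dimension on a compact neighborhood of $S$ avoiding $q$ --- is correct. No gaps.
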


Historically, in \cite{BMQ-r, MQ21a, MQ22g}, $n$-Laplace equations and applications in hypersurfaces and conformal geometry were first investigated
in connection to find Huber's type theorems in general dimensions. The study of asymptotic behavior of $n$-superharmonic functions at singularities
was also carried out by using on nonlinear potential theory in \cite{LMQZ1, LMQZ2} (cf. Theorem \ref{Thm:main-asymptotic-p} in previous section).

\begin{theorem} (\cite{MQ22g}) \label{Thm:intro-2}
For $n\geq 3$, let $D$ be a bounded domain in the Euclidean space $(\mathbb{R}^n, |dx|^2)$ and let $S\subset D$ be a subset which is closed in $\mathbb R^n$.
Suppose that, on $D\backslash S$, there is a conformal metric $g= e^{2v}|dx|^2$ satisfying
$$
\lim_{x\to S}v(x)=+\infty \text{ and } 
Ric^{-}_g|\nabla v|^{n-2}e^{2v} \in L^{1}(D\backslash S,|dx|^2).
$$
Then $S$ is a finite point set. 
\end{theorem}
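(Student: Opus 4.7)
The plan is to recognize $v$, up to a bounded perturbation, as an $n$-superharmonic function near $S$, apply the asymptotic estimate of Theorem \ref{Thm:main-asymptotic-p}, and use geodesic completeness to force a definite atom of the Riesz measure at every point of $S$; local finiteness of this measure then yields finiteness of $S$.

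\textbf{Step 1 (PDE reduction).} Using \eqref{Equ:n-laplace} with Euclidean background metric (so $Ric_{|dx|^{2}} = 0$), the conformal metric $\bar g = e^{2v}|dx|^{2}$ satisfies, on $D\setminus S$, an $n$-Laplace equation of the form
$$
-\Delta_{n} v = F,
$$
where $F$ is a conformal expression of $Ric_{\bar g}$, $\nabla v$, and $v$. The hypothesis that $Ric^{-}_{\bar g}\, |\nabla v|^{n-2}\, e^{2v}\in L^{1}(D\setminus S,\, |dx|^{2})$ translates, after rewriting $F$ in Euclidean terms, exactly into $F^{-}\in L^{1}$; thus $(-\Delta_{n}v)^{-}$ is an absolutely continuous finite signed measure on $D\setminus S$.

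\textbf{Step 2 (Extraction of an $n$-superharmonic model).} I would extend $-\Delta_{n}v$ to a signed Radon measure on $D$ by considering the truncations $v_{N}=\min(v,N)$: their negative $n$-Laplacian parts remain uniformly $L^{1}$, while weak-$\ast$ compactness of the positive parts yields a nonnegative Radon measure $\mu$ on $D$, partly concentrated on $S$ (where $v\to\infty$). Because $-\Delta_{n}$ is nonlinear, one cannot literally subtract an $n$-potential of $(-\Delta_{n}v)^{-}$ from $v$ to obtain an $n$-superharmonic function. Instead, near each $x_{0}\in S$ I would solve $-\Delta_{n} w = (-\Delta_{n}v)^{-}$ in a small ball $B$ around $x_{0}$ with zero boundary data; standard $L^{1}$-data estimates show that $w$ is bounded on $B$, and the comparison principle then implies that $v+w$ agrees, up to additive constants, with the $n$-superharmonic function $u$ on $B$ whose Riesz measure is $\mu|_{B}$.

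\textbf{Step 3 (Asymptotic behavior and atom size).} Applying Theorem \ref{Thm:main-asymptotic-p} with Laplacian exponent $n$ to $u$ at each $x_{0}\in S$ produces a set $E$ that is $n$-thin at $x_{0}$ such that
$$
\lim_{x\to x_{0},\; x\notin E}\; \frac{v(x)}{\log\frac{1}{|x-x_{0}|}}\;=\; m(x_{0})\;:=\; \Big(\frac{\mu(\{x_{0}\})}{|\mathbb{S}^{n-1}|}\Big)^{\frac{1}{n-1}}.
$$
By Lemma \ref{Lemsegment-escape} there is a radial ray from $x_{0}$ that avoids $E$ inside a small ball; along it, $e^{v(x_{0}+r\theta)}\leq C\, r^{-m(x_{0})}$ for small $r$. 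Geodesic completeness of $\bar g$ near $x_{0}$ forces $\int_{0}^{\delta} e^{v(x_{0}+r\theta)}\, dr = \infty$ along that ray, and hence $m(x_{0})\geq 1$, i.e.\ $\mu(\{x_{0}\})\geq |\mathbb{S}^{n-1}|$.

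\textbf{Step 4 (Conclusion and main obstacle).} Since $S$ is compact and contained in the open set $D$, the Radon measure $\mu$ is finite on a neighborhood of $S$; each point of $S$ contributes at least $|\mathbb{S}^{n-1}|$ to this total mass, so $S$ must be finite. The hardest point is Step 2: the nonlinearity of $-\Delta_{n}$ prevents a direct linear decomposition of $v$ into an $n$-superharmonic part and an $n$-potential of the $L^{1}$ density, and one must combine truncation, weak-$\ast$ compactness, and the comparison principle to transfer the asymptotic results for $u$ back to $v$.
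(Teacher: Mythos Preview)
Your overall strategy—reduce to an $n$-Laplace equation via \eqref{Equ:n-laplace}, control $v$ by Wolff potentials through Theorem \ref{Thm:main-use-wolff}, and count atoms of the Riesz measure—is the framework the paper uses. However, two of your steps contain genuine gaps.

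\medskip
\textbf{Step 2 is not correct as written.} You solve $-\Delta_n w = (-\Delta_n v)^-$ and then assert that ``the comparison principle implies that $v+w$ agrees, up to additive constants, with the $n$-superharmonic function $u$ whose Riesz measure is $\mu$''. The $n$-Laplacian is nonlinear, so $-\Delta_n(v+w)$ has no relation to $-\Delta_n v - \Delta_n w$; the comparison principle gives one-sided inequalities between solutions, never an additive splitting of this kind. Separately, your claim that $w$ is bounded is false in general for $p=n$ with merely $L^1$ data: the Wolff potential $W^{f}_{1,n}(x,r)=\int_0^r \bigl(\int_{B(x,t)}f\bigr)^{\frac{1}{n-1}}\frac{dt}{t}$ of an $L^1$ density can diverge (take $f(y)\sim |y|^{-n}(\log\frac{1}{|y|})^{-2}$ near $0$). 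What one can salvage is weaker: the negative part, being absolutely continuous, has \emph{no atoms}, so by Theorem \ref{Thm:wolff potential upper bdd-singualarity} its Wolff potential is $o(\log\frac{1}{|x-x_0|})$ off an $n$-thin set. Feeding this one-sided information into the two-sided estimate \eqref{Equ: main-use-wolff} is how the asymptotic for $v$ itself is obtained in \cite{MQ21a, MQ22g}; your linear-style ``subtract a potential'' step should be replaced by that comparison.

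\medskip
\textbf{Step 3 uses a hypothesis that is not there.} You write ``geodesic completeness of $\bar g$ near $x_0$ forces $\int_0^\delta e^{v}\,dr=\infty$, hence $m(x_0)\geq 1$''. But Theorem \ref{Thm:intro-2} does \emph{not} assume geodesic completeness; it only assumes $\lim_{x\to S}v(x)=+\infty$. These are inequivalent: for instance $v(x)=\log\log\frac{1}{|x|}$ blows up at $0$ while $e^{v}=\log\frac{1}{|x|}$ is integrable along radii, so the metric is incomplete and your argument would yield $m(0)=0$. Completeness enters only in Corollary \ref{Cor:intro-1}, which is \emph{deduced from} Theorem \ref{Thm:intro-2}, not the other way around. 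The finiteness of $S$ in \cite{MQ22g} is obtained instead by playing the hypothesis $v\to+\infty$ against the asymptotic control of $v$ outside an $n$-thin set at an accumulation point of $S$; the uniform atom bound you want must be extracted from $v\to+\infty$ alone, and your ray-integral argument does not do this.
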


One consequence of Theorem \ref{Thm:intro-2} is the following corollary.
\begin{corollary} (\cite{MQ22g}) \label{Cor:intro-1}
For $n\geq 3$, let $\Omega$ be a domain in the standard unit round sphere $(\mathbb{S}^n, g_{\mathbb{S}})$.
Suppose that, on $\Omega$, there is a complete conformal metric $g= e^{2u}g_{\mathbb{S}}$ satisfying either $Ric_g$ is nonnegative outside
a compact subset or 
\begin{enumerate}
\item $Ric^{-}_g \in L^{1}(\Omega, g) \cap L^\infty(\Omega, g)$
\item $R_g \in L^{\infty}(\Omega, g) \text{ and } |\nabla^g R_g|\in L^{\infty}(\Omega, g)$. 
\end{enumerate}
Then $\partial\Omega = \mathbb{S}^n\setminus\Omega$ is a finite point set. 
\end{corollary}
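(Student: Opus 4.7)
The plan is to reduce Corollary \ref{Cor:intro-1} to Theorem \ref{Thm:intro-2} via stereographic projection, and verify the two hypotheses of that theorem on a neighborhood of each boundary point. Since $\partial\Omega$ is closed in $\mathbb{S}^n$, it is compact, so to prove finiteness it suffices to prove discreteness: each $p\in\partial\Omega$ admits a neighborhood meeting $\partial\Omega$ in only finitely many points, after which a finite subcover concludes. Fix $p\in\partial\Omega$ and choose a stereographic projection $\phi$ from a point $q\in\mathbb{S}^n$ well away from a small neighborhood $U$ of $p$, so that $\phi:U\to D:=B_{r_0}(0)\subset\mathbb{R}^n$ is a conformal diffeomorphism with $\phi(p)=0$. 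Writing $g_\mathbb{S}|_U=e^{2\psi}|dx|^2$ for a smooth $\psi$ bounded on $\bar D$, the metric $g$ pulls back to $e^{2v}|dx|^2$ on $D\setminus S$ with $v:=u\circ\phi^{-1}+\psi$ and $S:=\phi(\partial\Omega\cap U)$ closed in $D$.

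To apply Theorem \ref{Thm:intro-2} in this chart, two conditions must be checked: (a) $\lim_{x\to S}v(x)=+\infty$, and (b) $\text{Ric}_g^-|\nabla v|^{n-2}e^{2v}\in L^1(D\setminus S,|dx|^2)$. For (b), the pointwise identity
\[
\text{Ric}_g^-|\nabla v|^{n-2}e^{2v}\,dx = \text{Ric}_g^-|\nabla^g v|^{n-2}\,dvol_g,
\]
derived from $|\nabla^g v|^2=e^{-2v}|\nabla v|^2$ and $dvol_g=e^{nv}\,dx$, recasts the integral intrinsically. Under the first alternative, $\text{Ric}_g^-$ has compact support in $\Omega$, so the integrand is supported on a set where $v$ is smooth and the integral is trivially finite. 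Under alternative (1)--(2), $\text{Ric}_g^-\in L^{n/2}(g)$ by interpolation between the $L^1$ and $L^\infty$ bounds, and H\"older's inequality reduces (b) to an $L^n(g)$-bound on $|\nabla^g v|$, which equals the Euclidean $L^n$-bound on $|\nabla v|$. For (a), the completeness of $g$ near $\partial\Omega$, paired with the uniform Ricci lower bound from either alternative and a resulting Cheng--Yau-type gradient estimate, forces $v\to+\infty$ pointwise at $S$; otherwise a locally bounded $v$ along an approach to some $s\in S$ would yield a finite-$g$-length curve ending at $s$, contradicting completeness.

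The main obstacle will be the $L^n$-gradient bound needed in (b) under alternative (1)--(2). The scalar curvature relation for $g=e^{2u}g_\mathbb{S}$,
\[
(n-1)(n-2)|\nabla^{g_\mathbb{S}} u|^2 + 2(n-1)\Delta_{g_\mathbb{S}} u = R_{g_\mathbb{S}} - e^{2u} R_g,
\]
contains a factor $e^{2u}R_g$ that can blow up near $\partial\Omega$ even when $R_g$ is bounded, so the boundedness of $R_g$ alone is insufficient. The additional $L^\infty$-bound on $\nabla^g R_g$ would enter through the contracted Bianchi identity $2\,\text{div}^g\text{Ric}_g=\nabla^g R_g$, furnishing enough regularity of $\text{Ric}_g$ to run a Moser-type or capacity argument; alternatively, writing the scalar curvature equation in the $n$-Laplace form \eqref{Equ:n-laplace} and applying the Wolff potential estimate of Theorem \ref{Thm:wolff potential upper bdd} from Section \ref{Sect:Wolff} would yield the required integrability of $\nabla v$. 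Once (a) and (b) are in place, Theorem \ref{Thm:intro-2} immediately delivers finiteness of $S$ at each boundary point, and compactness of $\partial\Omega$ completes the proof.
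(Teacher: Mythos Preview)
Your overall strategy---reduce to Theorem \ref{Thm:intro-2} via stereographic projection and verify its two hypotheses---is exactly what the paper indicates (it states the corollary as a direct consequence of Theorem \ref{Thm:intro-2} and gives no further proof here). Your pointwise identity $Ric_g^-|\nabla v|^{n-2}e^{2v}\,dx = Ric_g^-|\nabla^g v|^{n-2}\,dvol_g$ is correct, and your treatment of hypothesis (b) under the first alternative (compact support of $Ric_g^-$) is fine.

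However, two steps are genuine gaps rather than routine details. First, your argument for (a) does not work as written: failure of $v\to+\infty$ at some $s\in S$ only gives a \emph{sequence} $x_k\to s$ with $v(x_k)\le M$, and there is no reason this sequence lies on, or can be joined by, a curve to $s$ of finite $g$-length. The invocation of a ``Cheng--Yau-type gradient estimate'' is also unjustified: Cheng--Yau applies to positive harmonic (or similar) functions on a manifold with a Ricci lower bound, whereas the conformal factor $v$ satisfies no such equation intrinsically on $(\Omega,g)$. The route taken in \cite{MQ22g} instead uses the $n$-Laplace equation \eqref{Equ:n-laplace}: near $S$ one shows $v$ (after adding a bounded correction) is $n$-superharmonic, and then the nonlinear potential estimates (weak Harnack and the Wolff-potential comparison of Theorem \ref{Thm:main-use-wolff}) upgrade ``bounded along a sequence'' to ``bounded on spheres of a sequence of radii,'' which then does produce a ray of finite $g$-length via Lemma \ref{Lemsegment-escape}-type reasoning.

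Second, under alternative (1)--(2) you explicitly leave the $L^n$ bound on $\nabla v$ as an ``obstacle,'' and your proposed fixes are only gestures. Neither the contracted Bianchi identity nor a generic Moser argument yields $\int|\nabla v|^n\,dx<\infty$ from the stated hypotheses without substantial additional work; in \cite{MQ22g} this is precisely where the assumption $|\nabla^g R_g|\in L^\infty$ enters, through a careful integration-by-parts argument tied to the $n$-Laplace structure, not through Theorem \ref{Thm:wolff potential upper bdd} (which gives pointwise upper bounds on potentials, not $L^n$-integrability of gradients). Until these two points are actually carried out, the reduction to Theorem \ref{Thm:intro-2} remains incomplete.
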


%%%%%%%%%%%%%%%%%%%%%%%

\subsection{Applications to fully nonlinear elliptic equations}\label{Subsec:PDE}

In this subsection we collect some corollaries of Theorem \ref{Thm:main-asymptotic-p} on the solutions to fully nonlinear elliptic equations.
It is interesting to compare the intermediate positivity cones $\mathcal{A}^{(p)}$ with those in the study of fully nonlinear 
equations. Recall
$$
\Gamma^k = \{\lambda = \{(\lambda_1, \lambda_2, \cdots, \lambda_n)\in \mathbb{R}^n: \sigma_1(\lambda)\geq 0, \sigma_2(\lambda)\geq 0, \cdots, \sigma_k(\lambda)\geq 0\}
$$
for $k=1, 2, \cdots, n$, where $\sigma_l$ is the elementary symmetric functions for $l=1, 2, \cdots, n$. It is easily seen that $\mathcal{A}^{(2)} =\Gamma^1$ 
and $\mathcal{A}^{(p)}$ approaches $\Gamma^n$ as $p\to\infty$. Hence, for any positive cone $\Gamma$ between $\Gamma^1$ and 
$\Gamma^n$, we may consider
\begin{equation}\label{Equ:p-index}
p_\Gamma = \max \{p: \Gamma \subset \mathcal{A}^{(p)}\}.
\end{equation}
$p_\Gamma$ is useful when one uses $p$-superharmonic functions to study solutions to a class of fully nonlinear elliptic equations. We first realize

\begin{lemma}\label{Lem:fully-p-gamma} (\cite{LMQZ1, LMQZ2}) 
Suppose that $u$ is nonnegative and that $u\in C^2 (\Omega\setminus S)$ for a compact subset $S$ of a
bounded domain $\Omega$ in $\mathbb{R}^n$. And suppose $\lim_{x\to S}u(x) = +\infty.$ 
Assume -$\lambda (D^2 u(x)) \in \Gamma$ for $p_\Gamma \in (1, n]$. Then $u$ is a $p_\Gamma$-superharmonic function in $\Omega$.
\end{lemma}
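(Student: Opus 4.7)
The plan is to verify that $u$ satisfies Definition~\ref{Def:p-superhar} on $\Omega$ after extending by $+\infty$ on $S$. Lower semi-continuity follows at once from $u\in C^2(\Omega\setminus S)$ and $\lim_{x\to S}u(x)=+\infty$, so the substance of the argument lies in the comparison principle. I would prove it in two stages: first derive $-\Delta_{p_\Gamma}u\ge 0$ pointwise on $\Omega\setminus S$ from the cone hypothesis, then use the blow-up at $S$ to promote this to the comparison principle on all of $\Omega$.

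For the pointwise inequality, at any $x\in\Omega\setminus S$ with $\nabla u(x)\neq 0$, expanding the $p_\Gamma$-Laplacian gives
\[
-\Delta_{p_\Gamma}u = |\nabla u|^{p_\Gamma-2}\Bigl[-\Delta u - (p_\Gamma-2)\frac{D^2u(\nabla u,\nabla u)}{|\nabla u|^2}\Bigr].
\]
Let $\mu_1,\dots,\mu_n$ be the eigenvalues of $-D^2u(x)$ and let $\theta_k\ge 0$ be the squared components of $\nabla u/|\nabla u|$ in the corresponding orthonormal eigenbasis, so $\sum_k\theta_k=1$. Then $-\Delta u=\sum_k\mu_k$ and $-D^2u(\nabla u,\nabla u)/|\nabla u|^2=\sum_k\theta_k\mu_k$, and using $\sum_j\theta_j=1$ to regroup gives
\[
-\Delta_{p_\Gamma}u(x) = |\nabla u|^{p_\Gamma-2}\sum_{k=1}^n \theta_k\Bigl[(p_\Gamma-2)\mu_k + \sum_{j=1}^n\mu_j\Bigr].
\]
Since $-\lambda(D^2u(x))=(\mu_1,\dots,\mu_n)\in\Gamma\subset\mathcal{A}^{(p_\Gamma)}$, every bracket is nonnegative by the definition \eqref{Equ:A-p-cone}, so $-\Delta_{p_\Gamma}u(x)\ge 0$. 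The critical-point case $\nabla u(x)=0$ is trivial for $p_\Gamma\ge 2$ and follows by a routine $\epsilon$-regularization (replace $|\nabla u|^{p-2}$ by $(|\nabla u|^2+\epsilon)^{(p-2)/2}$ and let $\epsilon\to 0$) for $p_\Gamma\in(1,2)$. Since $u\in C^2(\Omega\setminus S)$, integration by parts against a nonnegative test function then shows $u$ is a classical $p_\Gamma$-supersolution, hence $p_\Gamma$-superharmonic in the sense of Definition~\ref{Def:p-superhar}, on $\Omega\setminus S$.

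To verify the comparison principle on $\Omega$, fix $D\subset\subset\Omega$ and $h\in C(\bar D)$ that is $p_\Gamma$-harmonic on $D$ with $h|_{\partial D}\le u|_{\partial D}$. Because $h$ is bounded on $\bar D$ while $u\to+\infty$ at $S$, for each $\delta>0$ there is an open neighborhood $U_\delta$ of $S\cap\bar D$ on which $u>\max_{\bar D}h+\delta$. On the open set $D_\delta:=D\setminus\overline{U_\delta}$ the function $u$ is classically $p_\Gamma$-superharmonic, $h$ is $p_\Gamma$-harmonic, and $h\le u$ on $\partial D_\delta\subset\partial D\cup\partial U_\delta$, so the classical comparison principle for the $p$-Laplacian gives $h\le u$ on $D_\delta$. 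Letting $\delta\to 0$ yields $h\le u$ on $D\setminus S$, and $u\equiv+\infty$ on $S$ finishes the verification. The main technical difficulty is this last step: extracting a comparison principle across the potentially rough set $S$ without any quantitative control on its size or regularity. The argument succeeds precisely because the competitor $h$ is bounded (continuous on the compact set $\bar D$) while the blow-up of $u$ at $S$ is purely qualitative, allowing the exhaustion by $D_\delta$ to work for every compact $S$.
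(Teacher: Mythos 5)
Your proof is correct and follows the approach the paper intends: the pointwise identity $-\Delta_{p}u=|\nabla u|^{p-2}\sum_k\theta_k\bigl[(p-2)\mu_k+\sum_j\mu_j\bigr]$ is exactly the mechanism by which the cone $\mathcal{A}^{(p)}$ was designed to interact with the $p$-Laplacian (compare the derivation of \eqref{Equ:p-laplace} by contracting with $u|\nabla u|^{p-2}u_iu_j/|\nabla u|^2$), and the extension of the comparison principle across $S$ using $u\to+\infty$ is the standard step the paper attributes to the arguments of \cite[Lemmas 3.2--3.3]{MQ21a}. The points you leave as routine (the critical-point/regularization issue for $p_\Gamma<2$ and the passage from continuous weak supersolution to superharmonic in the sense of Definition~\ref{Def:p-superhar}) are indeed standard.
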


We remark that, from the proof of \cite[Lemma 3.2 and 3.3]{MQ21a} (see also \cite[Proposition 1.1]{BV89} when $S$ is an isolated point), 
it is easily seen that $-\Delta_{p_\Gamma} u$ is a Radon measure under even somewhat weaker assumptions. Consequently, we have
 
\begin{corollary}\label{Cor:app-fully-nonlinear} (\cite{LMQZ1, LMQZ2})
Suppose that $u$ is nonnegative and that $u\in C^2 (\Omega\setminus S)$ for a compact subset $S$ of a bounded domain $\Omega$ in $\mathbb{R}^n$. 
And suppose $\lim_{x\to S}u(x) = +\infty.$ 
Assume -$\lambda (D^2 u(x)) \in \Gamma$ for $p_\Gamma \in (1, n]$. Then $S$ is of Hausdorff dimension not greater than $n-p_\Gamma$ and, 
for $x_0\in S$, there are a subset $E$ that is $p_\Gamma$-thin 
for the singular behavior at $x_0$ and a nonnegative number $m$ such that
$$
\lim_{x\to x_0 \text{ and } x\notin E}  \frac {u(x)}{G_{p_\Gamma}(x, x_0)}= m.
$$
Moreover $u(x) \geq m G_{p_\Gamma}(x, x_0) - c_0$ in some neighborhood of $x_0$.
\end{corollary}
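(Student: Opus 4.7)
The plan is to assemble the ingredients from Subsection \ref{Subsec:nonlinear-potential}. First, Lemma \ref{Lem:fully-p-gamma} is tailor-made for this situation: the hypotheses that $u \geq 0$ lies in $C^2(\Omega\setminus S)$, blows up on $S$, and satisfies $-\lambda(D^2 u) \in \Gamma$ promote $u$ to a $p_\Gamma$-superharmonic function on all of $\Omega$. The remark following that lemma (together with the extension arguments of \cite{MQ21a, BV89}) then furnishes a nonnegative finite Radon measure $\mu$ on a neighborhood of $S$ with $-\Delta_{p_\Gamma} u = \mu$ in the distributional sense. Once this equation is in hand, the Wolff potential estimates of the previous subsection apply to the pair $(u, \mu)$ with $p = p_\Gamma$.

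For the pointwise asymptotic half of the conclusion, fix any $x_0 \in S$ and invoke Theorem \ref{Thm:main-asymptotic-p} with $p = p_\Gamma$. That theorem directly produces the $p_\Gamma$-thin set $E$ at $x_0$, the nonnegative constant $m$ (a normalized power of $\mu(\{x_0\})$), the limit $u(x)/G_{p_\Gamma}(x, x_0) \to m$ as $x \to x_0$ off $E$, and the ``moreover'' lower bound $u(x) \geq m G_{p_\Gamma}(x, x_0) - c_0$ in a neighborhood of $x_0$. No further argument is needed for this half of the corollary.

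For the Hausdorff dimension bound I would argue by contradiction. Assume $\dim_{\mathcal{H}}(S) > n - p_\Gamma$ and pick $d$ with $n - p_\Gamma < d < \dim_{\mathcal{H}}(S)$, so that $\mathcal{H}_d(S) = +\infty$. By Lemma \ref{Lem:Kpata}, $\mathcal{H}_d(G_d^\infty) = 0$, so there exists $x_0 \in S \setminus G_d^\infty$, that is, $\mu(B(x_0, t)) \leq C t^d$ for all sufficiently small $t$. Plugging this growth into \eqref{Equ:wolff-potential} and using $d > n - p_\Gamma$ yields
$$
W^\mu_{1, p_\Gamma}(x_0, r_0) \leq C \int_0^{r_0} t^{(d - n + p_\Gamma)/(p_\Gamma - 1) - 1}\, dt < \infty.
$$
Since $\{u = +\infty\}$ has zero $p_\Gamma$-capacity (a standard property of $p$-superharmonic functions), $S$ has empty interior and $\inf_{B(x_0, r)} u < \infty$ on any small ball at $x_0$. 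The upper half of Theorem \ref{Thm:main-use-wolff} applied at $x_0$ then forces $u(x_0) < \infty$, contradicting $u(x_0) = +\infty$, which holds by lower semicontinuity of $u$ combined with $\lim_{x \to S} u = +\infty$. Hence $\dim_{\mathcal{H}}(S) \leq n - p_\Gamma$.

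The substantive obstacle is not the final assembly but rather the content of Lemma \ref{Lem:fully-p-gamma}: one must verify that the pointwise cone condition $-\lambda(D^2 u) \in \Gamma \subset \mathcal{A}^{(p_\Gamma)}$ on $\Omega \setminus S$, together with the blow-up on $S$, genuinely extends $u$ to a $p_\Gamma$-superharmonic function across $S$ with a well-defined Radon-measure $p_\Gamma$-Laplacian. The choice of $p_\Gamma$ via \eqref{Equ:p-index} is precisely the largest exponent for which this extension is available, and once it is secured the corollary reduces to a clean application of Theorem \ref{Thm:main-asymptotic-p}, Lemma \ref{Lem:Kpata}, and Theorem \ref{Thm:main-use-wolff}.
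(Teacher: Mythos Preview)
Your proposal is correct and, for the asymptotic limit and the lower bound $u \geq m G_{p_\Gamma} - c_0$, follows exactly the route the paper indicates by its ``Consequently, we have'': Lemma~\ref{Lem:fully-p-gamma} and the remark after it make $u$ a $p_\Gamma$-superharmonic function on $\Omega$ with $-\Delta_{p_\Gamma} u = \mu$ a nonnegative Radon measure, and Theorem~\ref{Thm:main-asymptotic-p} then gives the thin set $E$, the constant $m$, the limit, and the lower bound verbatim.

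For the Hausdorff dimension bound the paper gives no explicit argument, and your contradiction via Lemma~\ref{Lem:Kpata} and the upper Wolff estimate in Theorem~\ref{Thm:main-use-wolff} is sound. It is worth noting, however, that a shorter path is already implicit in a fact you invoked along the way: since $\lim_{x\to S} u(x) = +\infty$ and $u$ is $p_\Gamma$-superharmonic on $\Omega$, one has $S \subset \{u = +\infty\}$, and the polar set of a $p$-superharmonic function has $cap_p$-capacity zero (cf.\ \cite{HKM93}); the standard comparison between $p$-capacity and Hausdorff measure then yields $\dim_{\mathcal H}(S) \leq n - p_\Gamma$ immediately, without passing through Lemma~\ref{Lem:Kpata} or the Wolff potential at $x_0$. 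Your argument recovers this same bound by a more hands-on route, which has the merit of being self-contained within the tools assembled in Subsection~\ref{Subsec:nonlinear-potential}.
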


It seems surprising that 
we have a rather effective way to calculate $p_\Gamma$ for a cone associated with a homogeneous, symmetric, convex function of $n$-variables.

\begin{lemma}\label{Lem:calculate-p-index} (\cite{LMQZ1, LMQZ2}) Suppose that $\Gamma$ is a cone given by a homogeneous, symmetric, convex function $F(\lambda)$ on 
$\mathbb{R}^n$. Let $(-\frac {n-1}{p-1}, 1, 1, \cdots, 1)\in \partial\Gamma = \{\lambda\in \mathbb{R}^n: F(\lambda)=0\}$. Then 
$\Gamma\subset\mathcal{A}^{(p)}$ and $p_\Gamma = p$.
\end{lemma}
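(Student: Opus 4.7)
The plan is to identify the unique $S_n$-equivariant supporting half-space of $\Gamma$ at the distinguished boundary point $v_0 = (-\tfrac{n-1}{p-1}, 1, \ldots, 1)$ and match it against one of the defining inequalities of $\mathcal{A}^{(p)}$. Throughout I assume $p \geq 2$, which is the range in which $\mathcal{A}^{(p)}$ sits in the paper's framework. First I would record a direct calculation: writing $\ell_k^{(q)}(\lambda) = (q-2)\lambda_k + \sum_i \lambda_i$ for the $k$-th defining functional of $\mathcal{A}^{(q)}$, one checks $\ell_1^{(p)}(v_0) = 0$, while $\ell_k^{(p)}(v_0) = (p-2)(n+p-2)/(p-1) \geq 0$ for $k \geq 2$, so $v_0 \in \partial\mathcal{A}^{(p)}$ and for $p > 2$ the only active constraint there is $\ell_1^{(p)} \geq 0$. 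The analogous calculation with $p$ replaced by $p' > p$ yields $\ell_1^{(p')}(v_0) = \tfrac{n-1}{p-1}(p - p') < 0$, which will drive the maximality half of the lemma.

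The core step is showing $\Gamma \subset \{\ell_1^{(p)} \geq 0\}$. Since $\Gamma$ is a closed convex cone with $e = (1, \ldots, 1)$ in its interior (a standard property, reflecting that $F$ is positive on the positive diagonal for the cones of interest here) and $v_0 \in \partial\Gamma$, Hahn--Banach produces a nonzero linear functional $L$ with $L \geq 0$ on $\Gamma$ and $L(v_0) = 0$. Let $H \subset S_n$ be the stabilizer of $v_0$, i.e.\ the subgroup fixing the first coordinate and permuting the last $n-1$ coordinates. By symmetry of $\Gamma$, the $H$-average $\bar L = \tfrac{1}{|H|}\sum_{\sigma \in H} L \circ \sigma$ is still a supporting functional of $\Gamma$ at $v_0$, and by construction has the form $\bar L(\lambda) = a\lambda_1 + b\sum_{i \geq 2}\lambda_i$. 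The equation $\bar L(v_0) = 0$ forces $a = (p-1)b$, so $\bar L$ is proportional to $\ell_1^{(p)}$. This proportionality constant is strictly positive: since $e$ is $S_n$-fixed, $\bar L(e) = L(e)$, and $L(e) > 0$ because $e$ lies in the interior of $\Gamma$ while $L$ is a nonzero element of the dual cone. Hence $\Gamma \subset \{\ell_1^{(p)} \geq 0\}$, and applying the argument at each permutation of $v_0$ (all of which lie in $\partial\Gamma$ by symmetry) gives $\Gamma \subset \bigcap_k \{\ell_k^{(p)} \geq 0\} = \mathcal{A}^{(p)}$.

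To finish with $p_\Gamma = p$, I would only need to rule out $\Gamma \subset \mathcal{A}^{(p')}$ for any $p' > p$. Since $\Gamma$ is closed and $v_0 \in \partial\Gamma$, we have $v_0 \in \Gamma$; but the first paragraph gives $\ell_1^{(p')}(v_0) < 0$, hence $v_0 \notin \mathcal{A}^{(p')}$. This obstructs the inclusion and, combined with $\Gamma \subset \mathcal{A}^{(p)}$, yields $p_\Gamma = p$.

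The main obstacle is verifying that the symmetrization $\bar L$ is nonzero. Without an interior point hypothesis on $\Gamma$ one cannot exclude the pathological possibility that the entire normal cone to $\Gamma$ at $v_0$ sits inside the orthogonal complement of the $H$-fixed subspace, in which case the averaging would produce $\bar L = 0$ and no equivariant supporting functional would be available to match against $\ell_1^{(p)}$. The hypothesis that $F$ is strictly positive on the positive diagonal (satisfied by every cone between $\Gamma^1$ and $\Gamma^n$ appearing in the paper) is exactly what rules this out, and is essentially the only place in the argument where any specific feature of $F$ is used.
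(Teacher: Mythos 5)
Your argument is correct. The paper states this lemma without proof (it defers entirely to \cite{LMQZ1, LMQZ2}), so there is no in-text argument to compare against, but the route you take --- a Hahn--Banach supporting functional at $v_0$, averaged over the stabilizer of $v_0$ in $S_n$ so that it is forced to be a positive multiple of $\ell_1^{(p)}(\lambda)=(p-2)\lambda_1+\sum_i\lambda_i$, combined with the explicit computation $\ell_1^{(p')}(v_0)<0$ for $p'>p$ to cap $p_\Gamma$ --- is the natural one, and every step checks out (including the normalization $a=(p-1)b$ from $\bar L(v_0)=0$ and the monotonicity $\mathcal{A}^{(p')}\subset\mathcal{A}^{(p)}$ for $p'>p$ that makes the set of admissible exponents an interval). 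Two remarks. First, you are right that the argument needs $e=(1,\dots,1)\in\mathrm{int}\,\Gamma$: this is not literally in the statement, but it holds for every cone between $\Gamma^1$ and $\Gamma^n$, which are the only ones the paper uses, and it is exactly what guarantees $L(e)>0$ and hence that the symmetrized functional $\bar L$ is a \emph{nonzero} (positive) multiple of $\ell_1^{(p)}$; good that you flagged this as the one place a hypothesis beyond closedness, convexity and symmetry of the cone is used. Second, the statement's hypothesis that $F$ is convex is loosely worded (for a convex $F$ the superlevel set $\{F\ge 0\}$ need not be convex); what your proof actually uses, correctly, is that the cone $\Gamma$ itself is closed, convex and $S_n$-invariant, which is the intended reading and is what holds for the cones $\Gamma^k$ to which the lemma is applied in Corollary \ref{Cor:gamma-k-p-index}.
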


Consequently, we can calculate $p_{\Gamma^k}$ easily. 

\begin{corollary}\label{Cor:gamma-k-p-index}  (\cite{LMQZ1, LMQZ2}) For the positive cone $\Gamma^k$, we have
\begin{equation}\label{Equ:gamma-k-p-index}
p_{\Gamma^k} = \frac {n(k-1)}{n-k} + 2 \in [2, n]
\end{equation}
for $1\leq k \leq \frac n2$.
\end{corollary}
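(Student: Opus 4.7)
The plan is to apply Lemma \ref{Lem:calculate-p-index} directly. The positive cone $\Gamma^k$ is homogeneous and symmetric by construction, and its convexity as a subset of $\mathbb{R}^n$ is classical: $\sigma_k^{1/k}$ is concave on the component of $\{\sigma_k>0\}$ containing $(1,1,\ldots,1)$, so $\Gamma^k$ is cut out by the symmetric, homogeneous, convex function $F:=-\sigma_k^{1/k}$ (suitably extended). Thus the hypotheses of the lemma are satisfied, and it suffices to locate the unique $t>0$ for which the point $\lambda^{(t)}:=(-t,1,1,\ldots,1)$ lies on $\partial\Gamma^k$, after which $p_{\Gamma^k}$ is recovered from $t=\frac{n-1}{p_{\Gamma^k}-1}$.

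On $\partial\Gamma^k$ the binding constraint is $\sigma_k=0$. Splitting the elementary-symmetric monomials according to whether or not they include the first coordinate, one computes
\[
\sigma_k(\lambda^{(t)}) \;=\; \binom{n-1}{k} - t\binom{n-1}{k-1}.
\]
Using the identity $\binom{n-1}{k}/\binom{n-1}{k-1}=(n-k)/k$, the equation $\sigma_k(\lambda^{(t)})=0$ yields $t=(n-k)/k$. Solving $\frac{n-1}{p-1}=\frac{n-k}{k}$ then produces
\[
p_{\Gamma^k} \;=\; \frac{k(n-1)}{n-k}+1 \;=\; \frac{n(k-1)}{n-k}+2,
\]
which is the asserted formula.

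Before concluding, I would verify that $\lambda^{(t)}$ genuinely sits on $\partial\Gamma^k$ rather than in its complement, i.e.\ that $\sigma_j(\lambda^{(t)})\ge 0$ for all $1\le j\le k-1$, so $\sigma_k=0$ is indeed the active constraint. The same splitting gives
\[
\sigma_j(\lambda^{(t)}) \;=\; \binom{n-1}{j}\Bigl(1 - \tfrac{jt}{n-j}\Bigr),
\]
which is non-negative iff $t\le (n-j)/j$. Substituting $t=(n-k)/k$, this reduces to $j\le k$, so it holds for every $j\le k-1$, as needed.

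Finally, the range statement $p_{\Gamma^k}\in[2,n]$ for $1\le k\le n/2$ follows by direct substitution: at $k=1$ the formula gives $2$, at $k=n/2$ it gives $n$, and $k\mapsto \frac{n(k-1)}{n-k}+2$ is manifestly increasing on $[1,n/2]$. The most delicate step is really the invocation of Lemma \ref{Lem:calculate-p-index}: the defining inequalities $\sigma_1\ge 0,\ldots,\sigma_k\ge 0$ do not a priori cut out a convex set, and one must appeal to the concavity of $\sigma_k^{1/k}$ on $\Gamma^k$ (or a comparable classical fact) to satisfy the convexity hypothesis. Everything else reduces to the brief binomial computation above.
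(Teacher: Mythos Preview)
Your proof is correct and follows the same approach as the paper: apply Lemma~\ref{Lem:calculate-p-index} and compute $\sigma_k(-t,1,\ldots,1)$ to find the value of $t$ (and hence $p$) placing the point on $\partial\Gamma^k$. The paper's proof is in fact just the one-line identity $\sigma_k(-\tfrac{n-k}{k},1,\ldots,1)=0$; your additional checks that $\Gamma^k$ meets the convexity hypothesis of the lemma, that the lower constraints $\sigma_j\ge 0$ for $j<k$ are not violated, and that $p_{\Gamma^k}\in[2,n]$ on the stated range, are all sound and make the argument more complete than what the paper records.
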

\begin{proof} This simply is because
$$
\sigma_k(-\frac {n-k}k, 1, 1, \cdots, 1) = -\frac {n-k}k \left(\aligned k-1\\n-1\endaligned\right) + \left(\aligned & k\\n & -1\endaligned\right) = 0.
$$
\end{proof}  
 
Remarkably, we are able to derive an asymptotic estimates that extends \cite[Theorem 3.6]{Lab02} significantly.

\begin{corollary}\label{Cor:sigma-k-asymptotic} (\cite{LMQZ1, LMQZ2}) Suppose that $u$ is nonnegative and that $u\in C^2 (\Omega\setminus S)$ for a
compact subset $S$ inside a bounded domain $\Omega$ in $\mathbb{R}^n$. And suppose 
$\lim_{x\to S}u(x) = +\infty.$
Assume $-\lambda (D^2 u(x)) \in \Gamma^k$ for $ 1\leq k \leq \frac n2$. Then $S$ is of Hausdorff dimension not greater than $n-p_\Gamma$ and, 
for $x_0\in S$, there are a subset $E$ that is $p_{\Gamma^k}$-thin 
for the singular behavior at $x_0$ and a nonnegative number $m$ such that
$$
\lim_{x\to x_0 \text{ and } x\notin E}  \frac {u(x)}{\mathcal{G}^k(x, x_0)} = m.
$$
Moreover $u(x) \geq m \mathcal{G}^k(x, x_0) - c_0$ in some neighborhood of $x_0$, where
$$
\mathcal{G}^k(x, x_0) = \left\{\aligned |x-x_0|^{2 - \frac nk} & \text{ when } 1\leq k < \frac n2\\
- \log |x-x_0| & \text{ when } k = \frac n2\endaligned\right..
$$
\end{corollary}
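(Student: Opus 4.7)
The plan is to view this corollary as a direct specialization of Corollary \ref{Cor:app-fully-nonlinear} to the $k$-th Garding cone $\Gamma = \Gamma^k$, combined with the explicit value of $p_{\Gamma^k}$ supplied by Corollary \ref{Cor:gamma-k-p-index}. Once those two ingredients are in place, all that remains is a short bookkeeping computation to identify the universal Green-type kernel $G_{p_{\Gamma^k}}$ of Theorem \ref{Thm:main-asymptotic-p} with the concrete kernel $\mathcal{G}^k$ appearing in the statement.

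First I would invoke Lemma \ref{Lem:fully-p-gamma} with $\Gamma = \Gamma^k$. Under the hypotheses $u \in C^2(\Omega \setminus S)$, $\lim_{x \to S} u(x) = +\infty$, and $-\lambda(D^2 u) \in \Gamma^k$, the lemma, together with the remark after it guaranteeing that $-\Delta_{p_{\Gamma^k}} u$ extends as a nonnegative Radon measure across $S$, tells us that $u$ is a $p_{\Gamma^k}$-superharmonic function on all of $\Omega$. Corollary \ref{Cor:gamma-k-p-index} supplies the explicit value $p_{\Gamma^k} = \frac{n(k-1)}{n-k} + 2 \in [2, n]$ for $1 \leq k \leq n/2$, with the endpoint $p_{\Gamma^{n/2}} = n$.

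Next I would apply Corollary \ref{Cor:app-fully-nonlinear} directly to $u$ with $p_\Gamma = p_{\Gamma^k}$. This delivers in one stroke the Hausdorff bound $\dim_{\mathcal{H}}(S) \leq n - p_{\Gamma^k}$; at each $x_0 \in S$, a $p_{\Gamma^k}$-thin subset $E$ and a nonnegative number $m$ such that $u(x)/G_{p_{\Gamma^k}}(x, x_0) \to m$ as $x \to x_0$ off $E$; and the matching asymptotic lower bound $u(x) \geq m\, G_{p_{\Gamma^k}}(x, x_0) - c_0$ in a neighborhood of $x_0$.

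Finally I would match the kernels. For $1 \leq k < n/2$ we have $p_{\Gamma^k} \in [2, n)$, so $G_{p_{\Gamma^k}}(x, x_0) = |x - x_0|^{-(n - p_{\Gamma^k})/(p_{\Gamma^k} - 1)}$, and a direct computation yields
$$
-\frac{n - p_{\Gamma^k}}{p_{\Gamma^k} - 1} = -\frac{(n-1)(n-2k)/(n-k)}{k(n-1)/(n-k)} = 2 - \frac{n}{k},
$$
which is precisely the exponent in $\mathcal{G}^k$. For the boundary case $k = n/2$, $p_{\Gamma^{n/2}} = n$ exactly, and the logarithmic branch of \eqref{Equ:G-p} gives $G_n(x, x_0) = -\log|x - x_0|$, matching $\mathcal{G}^{n/2}$. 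I do not expect a genuine obstacle: the hard analytic content (Wolff-potential upper bounds, convergence along the fine topology, $p$-thin exceptional sets) is entirely absorbed into Theorem \ref{Thm:main-asymptotic-p} and Corollary \ref{Cor:app-fully-nonlinear}. The one point requiring a touch of care is verifying that the pointwise $C^2$ cone inclusion on $\Omega \setminus S$ together with $u \to +\infty$ at $S$ really promotes $u$ to a $p_{\Gamma^k}$-superharmonic function on all of $\Omega$, which is exactly the content of the remark after Lemma \ref{Lem:fully-p-gamma} and is handled by the comparison-principle arguments of \cite{MQ21a, BV89}.
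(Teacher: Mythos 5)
Your proposal is correct and follows exactly the route the paper intends: the corollary is the specialization of Corollary \ref{Cor:app-fully-nonlinear} (via Lemma \ref{Lem:fully-p-gamma} and the remark following it) to $\Gamma = \Gamma^k$, with the exponent identification $-\frac{n-p_{\Gamma^k}}{p_{\Gamma^k}-1} = 2 - \frac nk$ coming from the value $p_{\Gamma^k} = \frac{n(k-1)}{n-k}+2$ in Corollary \ref{Cor:gamma-k-p-index}, and your arithmetic checks out, including the endpoint $p_{\Gamma^{n/2}} = n$ giving the logarithmic kernel.
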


In general it takes a lot more to rule out the thin set $E$ in the above two corollaries, even for isolated singularities (cf. \cite{KV, KV-e,Vn17, Lab02}).

%%%%%%%%%%%%%%%%%%%%%%%%%%%%%%%
%%%%%%%%%%%%%%%%%%%%%%%%%%%%%%%

\vskip 0.3cm
\noindent$\mbox{}^\dag$School of Mathematical Science and LPMC, Nankai University, Tianjin, China; \\e-mail: 
msgdyx8741@nankai.edu.cn 
\vspace{0.2cm}

\noindent $\mbox{}^\ddag$ Department of Mathematics, University of California, Santa Cruz, CA 95064; \\
e-mail: qing@ucsc.edu

\end{document}